\newtheorem{theorem}{\bf{Theorem}}[section]
\newtheorem{lemma}[theorem]{\bf{Lemma}}
\newtheorem{proposition}[theorem]{\bf{Proposition}}
\newtheorem{algorithm}[theorem]{\bf{Algorithm}}
\theoremstyle{definition}
\newtheorem{example}[theorem]{\bf{Example}}
\newtheoremstyle{remark}{\topsep}{\topsep}
{}
{}
{\bfseries}
{.}
{ }
{\thmname{#1}\thmnumber{ #2}\thmnote{ #3}}
\theoremstyle{remark}
\newtheorem{remark}[theorem]{\bf{Remark}}
\theoremstyle{definition}
\newtheorem{definition}[theorem]{\bf{Definition}}
\numberwithin{equation}{section}
\def \B+{\operatorname{B}}
\def \Hom{\operatorname{Hom}}
\def \End{\operatorname{End}}
\def \Id{\operatorname{Id}}
\begin{document}

\begin{titlepage}

\thispagestyle{empty}

\title{Hopf algebras of formal diffeomorphisms and numerical integration on manifolds}

\author{Alexander Lundervold and Hans Munthe-Kaas\\ {\small Department of Mathematics, University of Bergen,}\\ {\small Johannes Brunsgate 12, N-5008 Bergen, Norway} \\ {\small \{alexander.lundervold,hans.munthe-kaas\}@math.uib.no}}

\date{}

\maketitle

\begin{abstract}
B-series originated from the work of John Butcher in the 1960s as a tool to analyze numerical integration of differential equations, in particular Runge--Kutta methods. Connections to renormalization theory in perturbative quantum field theory have been established in recent years. The algebraic structure of classical Runge--Kutta methods is described by the Connes--Kreimer Hopf algebra.

Lie--Butcher series are generalizations of B-series that are aimed at studying Lie-group integrators for differential equations evolving on manifolds. Lie group integrators are based on general Lie group actions on a manifold, and classical Runge--Kutta integrators appear in this setting as the special case of $\RR^n$ acting upon itself by translations. Lie--Butcher theory combines classical B-series on $\RR^n$ with Lie-series on manifolds. The underlying Hopf algebra $H_N$ combines the Connes--Kreimer Hopf algebra with the shuffle Hopf algebra of free Lie algebras.

Aimed at a general mathematical audience, we give an introduction to Hopf algebraic structures and their relationship to structures appearing in numerical analysis. In particular, we explore the close connection between Lie series, time-dependent Lie series and Lie--Butcher series for diffeomorphisms on manifolds. The role of the Euler and Dynkin idempotents in numerical analysis is discussed. A non-commutative version of a Fa\`a di Bruno bialgebra is introduced, and the relation to non-commutative Bell polynomials is explored.

\end{abstract}

\end{titlepage}


\tableofcontents

\section{Outline}
The main point of this paper is to explore algebraic structures underlying groups of formal diffeomorphisms on manifolds. The focus is on some important mathematical structures appearing in numerical integration on manifolds that are likely to find applications also in other areas of mathematics. The relationship between classical Lie series on manifolds, time-dependent Lie series and Lie--Butcher series is explained in detail. We develop the algebraic structures introduced in \cite{munthe-kaas95lbt,munthe-kaas98rkm,munthe-kaas03oep,munthe-kaas08oth,owren99rkm,owren05ocf,berland05aso}, and in particular explore connections between Hopf- and Lie algebras, differential geometry and analysis of numerical integration on manifolds. The paper does not go into a {detailed} study of applications of these algebraic structures in numerical analysis, but we sketch briefly several of the many applications in this field.

The introductory Chapters 2 and 3 give an overview of well-known results. Chapter 2 contains a brief introduction to numerical integration and algebraic structures appearing in  numerical analysis, both classical methods on $\RR^n$ and Lie group methods generalizing to manifolds. Chapter 3 presents a brief introduction to Hopf algebraic structures. 

Chapter 4 contains more new and recent material. It details the algebraic structures of Lie--Butcher theory, and discusses the interplay between algebraic and differential geometric points of view. In particular, we want to emphasize the strong connections between the algebraic theory of Lie series, time-dependent Lie series and Lie--Butcher series. Chapter 4 therefore starts with a discussion of classical Lie series and pullback formulas on manifolds, continuing with an exploration of some less known time-dependent pullback formulas. We will explain the relevance of the Euler and Dynkin idempotents in numerics and introduce a non-commutative \emph{Dynkin--Fa\`a di Bruno} bialgebra, related to non-commutative Bell polynomials appearing in various contexts in earlier works: numerical analysis~\cite{munthe-kaas98rkm}, control theory~\cite{monaco07fcc} and quantization~{\cite{lenczewski1998anl}}. This Dynkin--Fa\`a di Bruno bialgebra is related to, but different from, the Hopf algebras explored by Brouder et.\ al.\ in~\cite{brouder06nch}. 

In the final part of Chapter 4, we turn to Lie--Butcher series. We explore backward error analysis and the substitution law in the setting of algebras of non-commuting frames on manifolds. Although we will not give detailed expositions and applications of these subjects, we hope that this presentation will systematize the theory and open the topics for further research.

\section{Introduction to numerical integrators and their analysis}\label{sec:introNA}

Let $\M$ be a manifold and $F\colon \M\rightarrow \TM$ a vector field. By the flow of an autonomous vector field $F$ we mean the diffeomorphism $\Phi_{t,F}\colon \M\rightarrow\M$, defined for $t\in\RR$ such that $\Phi_{s,F}\opr \Phi_{t,F} = \Phi_{s+t,F}$, $\Phi_{0,F} = \id$ and 
$\left.\partial/\partial t\right|_{t=0}\Phi_{t,F}(p) = F(p)$ for all $p\in \M$.

Numerical integration of ODEs is about constructing good numerical approximations to $\Phi_{t,F}$ for a given vector field $F$. A numerical integration algorithm yields a diffeomorphism $\Psi_{h,F}$, henceforth called the \emph{numerical integrator}. The real parameter $h$ is called the \emph{step size}. For an initial point $y_0\in \M$, and a chosen step size $h>0$, the numerical method produces a discrete sequence of solution points $y_i = \Psi_{h,F}(y_{i-1})$, with the goal of arriving at $y_k \approx \Phi_{kh,F}(y_0)$. Note that, unlike the exact flow, numerical integrators are \emph{not} 1-parameter Lie groups in $h$. In general we have $\Psi_{h,F}\opr \Psi_{s,F} \neq \Psi_{h+s,F}$, and $\Psi_{-h,F}\neq\Psi_{h,F}^{-1}$. Integrators for which the latter identity holds are called  \emph{(time-)symmetric} methods. Most integrators satisfy the consistency conditions $\Psi_{0,F} = \id$ and $\left.\partial/\partial t\right|_{t=0}\Psi_{t,F}(p) = F(p)$ as well as scaling homogeneity $\Psi_{h,F}=\Psi_{1,hF}$.

Many algebraic aspects of numerical integration are related to the computation of compositions, logarithms and exponentials of numerical integrators. In this introduction we will introduce some basic algebraic structures arising in the analysis of numerical integrators. In particular we will focus on structures that originate from the study of \emph{Lie group integrators}, which are numerical integrators on general manifolds. The resulting theory combines Lie theory with the classical Butcher theory that describes numerical integrators on $\RR^n$. 

This first section presents a survey of well known results from numerical analysis. A detailed understanding of this introductory section is not necessary for reading the rest of the paper, and readers mainly interested in algebraic structures may jump directly to Section~\ref{sec:hopf}.

\subsection{Classical integrators}
In the early 1960s, John Butcher set out to explore the algebraic territory of numerical algorithms for integrating ODEs evolving on vector spaces
\begin{equation}\label{ode}
y'(t) = F(y),\qquad  y\in\RR^n,\quad F\colon \RR^n\rightarrow\RR^n.
\end{equation}
In particular he studied the family of Runge--Kutta methods.  Given a time step $h\in\RR$, these methods advance the solution from $y_0 = y(0)$ to $y_1\approx y(h)$ as: 
\begin{proglist}
{for } $r  =  1 : s$ {do}\+\\
$Y_r  = \sum_{k=1}^s a_{rk}F_k + y_0$\\
   $F_r   =  h F(Y_r)$\-\\
{end}\\
$y_1  =  \sum_{k=1}^s b_{k}F_k + y_0$.
\end{proglist}
This basic step is iterated: $y_0\mapsto y_1\mapsto\ldots\mapsto y_n$, with  constant or variable step sizes $h$, until the final solution $y_n \approx y(t_n)$ is reached. The coefficients $a_{rk}$ and $b_k$ for $r,k\in\{1,\ldots,s\}$ define a particular $s$-stage RK method.

A goal of numerical analysis is to characterize coefficients $a_{rk}$ and $b_k$ that yield `good' methods (when applied to a given class of differential equations). The view of what a good integration method is has, however, evolved over the last decades. 
Traditionally, \emph{order theory} and \emph{stability} were the most important properties to consider.  A numerical integrator is of order $p$ if the first $p+1$ terms of the Taylor expansion of the analytical solution agrees with the first $p+1$ terms of the numerical method (developed in the parameter $h$). Requiring a certain order results in algebraic conditions, called \emph{order conditions}, on the coefficients of the method.

Many numerical methods for solving the equation (\ref{ode}) can be studied by using \emph{B-series} (see e.g. \cite{hairer06gni}), introduced by Hairer and Wanner in 1974~\cite{hairer74otb}. A B-series is a (formal) series indexed over the set of rooted trees $T$, and can for a vector field $F$ be written as 
\begin{equation}\label{eq:bserclassic}B_{h}(a)(y) = a(\one)y + \sum_{\tau \in T} \frac{h^{|\tau|}}{\sigma(\tau)} a(\tau) \F_F(\tau)(y).\end{equation}
Here $a$ is a map $a: T \rightarrow \mathbb{R}$, $\one$ is the empty tree, $|\tau|$ is the number of vertices of $\tau$ (the \emph{order} of the tree $\tau$) and $\sigma$ is a certain symmetry factor. The map $\F_F(\tau): \mathbb{R}^n \rightarrow \mathbb{R}^n$ is the \emph{elementary differential} of the tree $\tau$ and is given recursively as follows:
\begin{equation}\label{eg:elmdiffB}\F_F(\tau) = F^{(m)}\left(\F_F(\tau_1)(y), ..., \F_F(\tau_m)(y)\right)(y),\end{equation} where $\tau = B^+(\tau_1,...,\tau_m)$ is the tree constructed by adding a common root to the subtrees $\tau_1\ldots\tau_m$, and $F^{(m)}$ is the $m$th derivative of the vector field.

One way in which B-series can be applied to the study of numerical methods is to order theory. For example, the order conditions for Runge--Kutta methods can easily be obtained by writing the method as a B-series and then comparing the coefficients of this series with the exact solution written as a B-series (see e.g. \cite[Chapter. III.1.2]{hairer06gni}).

The composition of Runge--Kutta methods is also of great interest, and this leads to the study of the composition of B-series. A series $B_{hF}(a)$ is inserted into another series $B_{hF}(b)$, which gives the B-series $B_{hF}(a)(B_{hF}(y)(b)) = B_{hF}(a\cdot b)(y)$. The resulting product $a\cdot b$ gives rise to a group, called the \emph{Butcher group} \cite{butcher72aat, chartier08aat}.

Butcher realized early on that the set of Runge--Kutta methods forms a group, and characterized algebraically the composition and inverse in this group. Much later, this group was identified with the character group of the Connes--Kreimer Hopf algebra \cite{connes1998har, dur86mfi,brouder04tra}.

In recent years the importance of preserving various geometric properties of the underlying continuous dynamical system has become better understood. The research topic \emph{Geometric Numerical Integration}~\cite{hairer06gni} emphasizes this view. Geometric integration algorithms have been successfully developed for various classes of differential equations, such as volume preserving flows, Hamiltonian equations, systems with first integrals and equations evolving on manifolds.
An important tool for investigating the geometrical properties of a numerical integrator is through \emph{backward error analysis}. For a given numerical method $\Psi_{h,F}$, we seek a series expansion of a modified vector field $(h,F)\mapsto \tilde{F}_h$ such that the numerical solution equals\footnote{The series for $\tilde{F}_h$ is a formal series which usually does not converge. By truncating the series at an optimal point we find a modified equation which is exponentially close to the numerical solution, see~\cite{hairer06gni}. In this paper we deal only with formal series, and convergence is not considered.} the analytical flow of the modified vector field: $\Psi_{h,F} = \left.\Phi_{t,\tilde{F}_h}\right|_{t=h}$. 

This is computed as a formal logarithm $\tilde{F}_h = \Log(\Psi_{h,F})$, which in  Hopf algebraic language is expressed by the \emph{Eulerian idempotent} (Section~\ref{eulidem}).

Still another idea, which has been developed in~\cite{chartier2005asl}, is to ask for a series development of a modified vector field $\overline{F}_h$ such that when the numerical method is applied to $\overline{F}_h$, the exact analytical solution is produced: $\Psi_{h,\overline{F}_h}=\Phi_{h,F}$. This has been taken much further in recent work~\cite{chartier07nib,calaque08tha}. The algebraic operation $(h,F)\mapsto \overline{F}_h$  is commonly referred to as a \emph{substitution law}. The Hopf algebra of the substitution law is introduced in~\cite{calaque08tha}.

The theory of B-series is often a very important component in a numerical analyst's toolbox, and is used to study all of the above: order theory, backward error analysis, modified vector fields and structure preserving properties of numerical integrators.

\subsection{Lie group integrators}
Numerical Lie group integrators for ODEs is a generalization of numerical integration of ODEs from the classical setting of equations on $\RR^n$ to differential equations on manifolds. See~\cite{iserles00lgm} for an extensive survey.

\subsubsection{Exponential Euler method}\label{sec:eeuler}
Let $\M$ denote a manifold, $\XM$ its Lie algebra of vector fields with the Jacobi bracket and $\DM$ the group of diffeomorphisms on $\M$. Let $\signature{\exp}{\XM}{\DM}$ denote the flow operator. We want to numerically integrate an ODE on $\M$  given as
\begin{equation}\label{eq:odeonM} 
y'(t) = F(y), \quad y(0) = y_0\quad\mbox{ for $F\in \XM$,}
\end{equation}
with the analytical solution $y(t) = \exp(tF)\dpr y_0$. Here $\exp(tF)\dpr y_0$ denotes the evaluation of the diffeomorphism $\exp(tF)$ at $y_0\in \M$.

\begin{assumption}
The fundamental assumption for numerical Lie group integrators is the existence of a subalgebra $\g \subset \XM$ such that
\begin{itemize}
\item All vector fields $V\in \g$ can be exponentiated exactly.
\item The Lie algebra $\g$ defines a \emph{frame} on $\TM$, i.e.\ $\g$ spans the tangentspace $T_p\M$ at all points $p\in \M$. In other words, the action generated by $\g$ is transitive on $\M$.
\end{itemize}
\end{assumption}

The vector fields in $\g$ are called the \emph{frozen vector fields}. Due to the frame assumption, we can always express the vector field $F$ and the ODE~(\ref{eq:odeonM}) in terms of frozen vector fields via a function $\signature{f}{\M}{\g}$ as $F(y) = f(y)\dpr y$, where $f(y)\in \XM$ and $f(y)\dpr y$ denotes evaluation of this vector field in $y$. Thus~(\ref{eq:odeonM}) can be written in the form
\begin{equation}\label{eq:odeonM2}
y'(t) = f(y)\dpr y, \quad y(0)  = y_0\in \M .
\end{equation}
In the case where $\g$ forms a basis for $T_y\M$, the function $f(y)$ is uniquely defined. In more general situations, $\g$ is an overdetermined frame for $T_y\M$, and there is a freedom in the choice of $f$. This is called a choice of isotropy, and is of major importance for the quality of the numerical integrator. 

With the equation written as~(\ref{eq:odeonM2}), we can present the simplest of all Lie group integrators: the \emph{exponential Euler method}. Given a time step $h\in\RR$, the method advances the solution from $y_0 = y(0)$ to $y_1\approx y(h)$ as:
\begin{algorithm}[Exponential Euler]
\[   y_1 = \exp(h f(y_0))\dpr y_0.\]
\end{algorithm}
In each step the solution is advanced $y_k\mapsto y_{k+1}$ by integrating the frozen vector field equation 
\[ y'(t) = f(y_k)\dpr y, \quad y(0) = y_k\] from $t=0$ to $t=h$. We will in the sequel present methods of higher order and with superior qualities compared to this simple scheme. The main theme of the paper is the algebraic structures arising from the numerical analysis of such integration schemes.

\subsubsection{Choosing a good action}
In practice it is of importance that $\exp \colon \g \rightarrow\DM$ can be computed fast, and furthermore that the given vector field $F(y)$ is locally well approximated by $f(y_0)\dpr y$. Exactly what `well' means depends on what we want to achieve. In many situations we can choose $\g$ so that certain first integrals of the original system are exactly preserved by the frozen flows. Choosing $\g$ and $\signature{f}{\M}{\g}$ is in many ways similar to choosing a \emph{preconditioner} in iterative methods for solving linear equations: we want a good approximation which is easy to compute.

A simple choice of $\g$ is obtained by embedding $\M\subset \RR^N$ and choosing $\g = \RR^N$ as the (commutative) algebra generated by $\{\partial/\partial x_j\}_{j=1}^N$, i.e., the constant vector fields on $\RR^N$. Since the vector fields are constant, we have $f(y_0)\dpr y = f(y_0)$, so the function $f$ simply becomes $f(y_0) = F(y_0)\in \RR^N$, all commutators in $\g$ vanish and the exponential on $\g$ is $\exp(V)\dpr p = V+p$ for $V,p \in \RR^N$. In this case all Lie group integrators will reduce to classical integrators, e.g.\ exponential Euler becomes the classical Euler $y_1 = hF(y_0) + y_0$.

The other extreme is $\g=\XM$ and $f(y)=F$ for all $y$,  in which case exponential Euler yields the analytical solution exactly. However, the  computation of the exponential on $\g$ is just as difficult as solving the original equation.
We seek efficient choices in between these two extremes.

In many cases $\g$ is given as the infinitesimal generators of a (e.g.\ left) Lie group action on $\M$. For example, consider the sphere $\M = S^2$ acted upon from left by the group $G=\operatorname{SO}(3)$ of orthogonal $3\times 3$ matrices, whose Lie algebra ${\mathfrak so}(3)$ consists of skew $3\times 3$ matrices. Any matrix $V\in {\mathfrak so}(3)$ is uniquely identified with the infinitesimal generator\footnote{Recall that the identification of the Lie algebra of a left group action with the infinitesimal generator in $\XM$ is an anti-homomorphism, $[\xi_V,\xi_W] = -\xi_{[V,W]}$. In this paper the brackets are Jacobi brackets on $\XM$, and some signs may differ when compared to cited papers.} $\xi_V\in \XM$, given  by matrix-vector multiplication $\xi_V\dpr y = Vy$ for $y\in S^2$. Therefore~(\ref{eq:odeonM2}) becomes $y'(t) = V(y) y$, where $V(y)$ is a skew symmetric matrix. The exponentiation is related to the matrix exponential $\operatorname{expm}(V)$ as $\exp(\xi_V)\dpr y = \operatorname{expm}(V)y$. 

Another important example of group actions arise in the solution of \emph{isospectral differential equations}, where $\operatorname{GL}(n)$ acts on  ${\mathfrak gl}(n)$ by the \emph{adjoint action} (similarity transform) $A\cdot Y = AYA^{-1}$ for $A\in \operatorname{GL}(n)$, $Y\in {\mathfrak gl}(n)$. In these problems $\M\subset {\mathfrak gl}(n)$ is one of the ({isospectral}) orbits of the action. For this action~(\ref{eq:odeonM2}) acquires the isospectral form  $Y'(t)  = [B(Y),Y]$ for some $B(Y)\in{\mathfrak gl}(n)$. Since the action is a similarity transform it is guaranteed that all the eigenvalues of $Y(t)$ are preserved also by the numerical integrator.

Yet another example, which occurs in \emph{Lie--Poisson problems} in computational mechanics, is the \emph{coadjoint action} of a Lie group on the dual of its Lie algebra, $\g^*$. In this case $\M \subset \g^*$ is a coadjoint orbit. Using this action we can guarantee that the numerical Lie group integrator exactly preserves the Casimirs of the continuous system.

For other problems it may be advantageous to choose $\g$ by simplifying the original equation to a family of integrable equations. An example is the computation of the motion of charged particles in a magnetic field. The solution in the case of constant magnetic fields is given by helical motions around the field lines. The corresponding Lie algebra yields fast and accurate Lie group integrators for the full problem of non-constant magnetic fields.  Another example is integration of a spinning top, where we obtain simpler equations by considering the direction of gravity as being constant in body coordinates. In both these problems, the action preserves important first integrals of the system. A third example is integration of stiff equations on $\RR^n$, where an integrable Lie algebra is obtained by considering all affine linear vector fields. This connects the theory of Lie group integrators with the so-called \emph{exponential integrators}.
See~\cite{iserles00lgm} for details.

\subsubsection{Higher order methods}
Most Lie group methods for integrating~(\ref{eq:odeonM2}) are built from linear operations and commutators in $\g$ and compution of flows of frozen vector fields (exponentials). Runge--Kutta type methods with basic motions expressed in terms of an exponential of a sum of elements in $\g$ are commonly referred to as RKMK methods~\cite{iserles00lgm}, as in  the following example:
\begin{algorithm}[4th order RKMK from~\cite{munthe-kaas98rkm}]\label{alg:rk4}
\[\begin{array}{ll}
Y_1 = y_0 &\ F_1 = hf(Y_1) \\ 
Y_2  = \exp(\frac12 F_1)\dpr y_0 &\ F_2  =  hf(Y_2)\\
Y_3  =  \exp(\frac12 F_2 + \frac{1}{24} [F_1,F_2])\dpr y_0 &\ F_3  =  hf(Y_3)\\
Y_4  =  \exp(F_3 + \frac16 [F_1,F_3])\dpr y_0 &\
F_4 = hf(Y_4)\\
V = \frac16 F_1 + \frac13 (F_2 +  F_3) + \frac16 F_4 &\
I  =  \frac18 F_1 + \frac{1}{12}(F_2 +F_3)-\frac{1}{24} F_4\\
y_1 =  \exp(V+[I,V])\dpr y_0 .
\end{array}
\]
\end{algorithm}
Methods where the basic motions are products of exponentials of simple elements in $\g$ are called \emph{Crouch--Grossman methods}~\cite{crouch93nio,owren99rkm}. 
\begin{algorithm}[3rd order Crouch--Grossman method from~\cite{owren99rkm}]
\[\begin{array}{ll}
Y_1 = y_0 &\ F_1 = hf(Y_1) \\ 
Y_2  = \exp(\frac34 F_1)\dpr y_0 &\ F_2  =  hf(Y_2)\\[.5ex]
Y_3  =  \exp(\frac{119}{216}F_2)\dpr\exp(\frac{17}{108}F_1)\dpr y_0 &\ F_3  =  hf(Y_3)\\[.5ex]
y_1 =  \exp(\frac{13}{51}F_3)\dpr\exp(-\frac{2}{3}F_2)\dpr\exp(\frac{24}{17}F_3) y_0 .
\end{array}
\]
\end{algorithm}
More recently methods have been developed which combine exponentials of sums and products of exponentials, as in the \emph{commutator free Lie group methods}~\cite{celledoni2003cfl}. An example is:
\begin{algorithm}[4th order commutator free method from~\cite{celledoni2003cfl}]
\[\begin{array}{ll}
Y_1 = y_0 &\ F_1 = hf(y_0)\\[.5ex]
Y_2  = \exp(\frac12 F_1)\dpr y_0 &\ F_2  =  hf(Y_2)\\[.5ex]
Y_3  =  \exp(\frac12 F_2)\dpr y_0 &\ F_3  =  hf(Y_3)\\[.5ex]
Y_4  =  \exp(-\frac12 F_1+F_3)\dpr Y_2 &\ F_4  =  hf(Y_4)\\[.5ex]
\multicolumn{2}{c}{y_1  =  \exp(\frac14 F_1+\frac16 (F_2+F_3)-\frac{1}{12}F_4)\dpr
\exp(-\frac{1}{12} F_1+\frac16 (F_2+F_3)+\frac14 F_4)\dpr y_0 .}
\end{array}
\]
\end{algorithm}
For equations of \emph{Lie type}, $y'(t) = f(t)\cdot y$, numerical methods based on Magnus and Fer expansions have been developed in~\cite{iserles1999sld, iserles1999imm}, and the algebraic theory has recently been developed further in~\cite{ebrahimi-fard2009ama}.

To study order conditions, backward error analysis and structure preservation of such methods, it is important to understand 
B-series in a general setting of group actions on manifolds. A first attempt at combining Lie and B-series in a common mathematical framework appeared in~\cite{munthe-kaas95lbt,munthe-kaas98rkm}. Hopf algebraic aspects have been explored further in \cite{munthe-kaas03oep,berland05aso,munthe-kaas08oth}.

\section{Hopf algebras}\label{sec:hopf}
This section gives a short collection of some facts and properties of Hopf algebras that we will use in this work. For a more thorough introduction, see e.g. \cite{abe80ha}, \cite{manchon06haf}, \cite{sweedler69ha}, \cite{kassel95qg}, \cite{cartier2006apo}.

\subsection{Basic definitions}\label{basics}

Let $k$ be a field containing $\mathbb{Q}$. 

\begin{definition}
A \emph{$k$-algebra} $A$ consists of a $k$-vector space $A$ together with two maps $\mu: A \otimes A \rightarrow A$ and $\eta: k \rightarrow A$, called the \emph{product} and the \emph{unit} of $A$, such that:
\begin{itemize}
\item[(i)] $\mu$ is associative, i.e. $\mu \circ (I \otimes \mu) = \mu \circ (\mu \otimes I)$,
\item[(ii)] the composites $A \cong A \otimes k \overset{I \otimes \eta}{\longrightarrow} A \otimes A \overset{\mu}{\rightarrow} A$ and $A \cong A \otimes k \overset{\eta \otimes I}{\longrightarrow} A \otimes A \overset{\mu}{\rightarrow} A$ both equal $I$.
\end{itemize}
Here $I$ denotes the identity map.
\end{definition}

\noindent An algebra $A$ is called commutative if $\mu \circ \tau = \mu$, where $\tau$ is the \emph{flip} map $\tau(a_1 \otimes a_2) = a_2 \otimes a_1$.

\begin{definition}
A \emph{$k$-coalgebra} $C$ is a $k$-vector space equipped with two maps $\Delta: C \rightarrow C \otimes C$ and $\epsilon: C \rightarrow k$, the \emph{coproduct} and the \emph{counit}, such that: 
\begin{itemize}
\item[(i)] $\Delta$ is coassociative, i.e. $(\Delta \otimes I) \circ \Delta = (I \otimes \Delta) \circ \Delta$,
\item[(ii)] the composites $C \overset{\Delta}{\rightarrow} C \otimes C \overset{I \otimes \epsilon}{\longrightarrow} C \otimes k \cong C$ and 
$C \overset{\Delta}{\rightarrow} C \otimes C \overset{\epsilon \otimes I}{\longrightarrow} k \otimes C \cong C$ both equal $I$.
\end{itemize}
\end{definition}

\noindent A coalgebra $C$ is called cocommutative if $\tau \circ \Delta = \Delta$.

\begin{definition}
A \emph{bialgebra} $H$ over $k$ is a $k$-vector space equipped with both an algebra $(H, \mu, \eta)$ and a coalgebra structure $(H, \Delta, \epsilon)$, such that the coproduct $\Delta: H \rightarrow H \otimes H$ and the counit $\epsilon: H \rightarrow k$ are algebra morphisms. These compatibility conditions can be expressed in terms of the following commutative diagrams\footnote{All diagrams were created using Paul Taylor's diagram package, available from \url{http://www.paultaylor.eu/diagrams/}}, where $\tau$ denotes the \emph{flip} operation $\tau(h_1,h_2) = (h_2,h_1)$:

\begin{multicols}{2}
\begin{center}
\begin{diagram}[labelstyle=\scriptstyle]
H^{\otimes 4} &&\rTo^{I \otimes \tau \otimes I}& &  H^{\otimes 4}\\
\uTo^{\Delta \otimes \Delta}    &&&&  \dTo_{\mu \otimes \mu}\\
H \otimes H&\rTo_{\mu}& H &\rTo_{\Delta}& H \otimes H
\end{diagram}

\begin{diagram}[labelstyle=\scriptstyle]
H \otimes H &\rTo^{\epsilon \otimes \epsilon}& k \otimes k\\ 
\dTo^{\mu} && \dTo_{\cong} \\
H &\rTo_{\epsilon}& k
\end{diagram}

\end{center}
\end{multicols}

\end{definition}

\noindent A bialgebra $H$ is called commutative if it is commutative as an algebra, and cocommutative if it is cocommutative as a coalgebra. 
\begin{remark}
There is symmetry in the definition of a bialgebra. Rather than requiring the coalgebra structure to respect the algebra structure in the above sense, we could have switched the role of the two structures. To complete the symmetry, we could in addition reverse the arrows in the two diagrams above. This would result in an equivalent definition.
\end{remark}

\paragraph{Grading.} Let $H$ be a \emph{graded} $k$-vector space, i.e. $H = \bigoplus_{n\geq 0}H_n$. There is a notion of a graded bialgebra, obtained by requiring the following of the algebra and coalgebra structure, respectively:

\begin{itemize}
\item[(i)] $\mu(H_p,H_q) \subset H_{p+q}$
\item[(ii)] $\Delta(H_n) \subset \bigoplus_{p+q=n} H_p \otimes H_q$.
\end{itemize}
The grading of an algebra $H$ gives rise to the \emph{grading operator} $Y: H \rightarrow H$ given by $$Y: h \mapsto \sum_{k\geq 0} kh_k,$$ where $h = \sum_{n \geq 0} h_n \in \bigoplus_{n \geq 0} H_n$. A graded bialgebra $H = \bigoplus_{n\geq 0} H_n$ is called \emph{connected} if $H_0 = k$. 

\begin{proposition}[{\cite{manchon06haf}}]
Let $H$ be a connected, graded bialgebra. Then, for any $x \in H_n$, $n \geq 0$, we have:
$$\Delta x = 1 \otimes x + x \otimes 1 + \tilde{\Delta} x, \text{  where  } \tilde{\Delta} x \in \bigoplus_{p+q = n, \,\, p,q > 0} H_p \otimes H_q.$$
\end{proposition}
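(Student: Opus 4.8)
The plan is to read the claim directly off the grading axiom for $\Delta$ together with the two counit axioms. Throughout I assume $n\geq 1$; the case $n=0$ is immediate, since connectedness together with $\Delta$ being an algebra morphism gives $H_0=k\cdot 1$ and $\Delta 1=1\otimes 1$.

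First I would fix $x\in H_n$ and expand the coproduct using condition (ii) of the grading on the coalgebra side: $\Delta x=\sum_{p+q=n}(\Delta x)_{p,q}$ with $(\Delta x)_{p,q}\in H_p\otimes H_q$. Next I would use connectedness to pin down the two extreme summands. Connectedness says $H_0=k$; moreover $\eta$ is injective, because $\epsilon\circ\eta=\mathrm{id}_k$ forces $1\neq 0$, so in fact $H_0=k\cdot 1$. Hence there are unique $a,b\in H_n$ with $(\Delta x)_{0,n}=1\otimes a$ and $(\Delta x)_{n,0}=b\otimes 1$.

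The key step is to identify $a$ and $b$ with $x$ using the counit. Apply the left counit identity $(\epsilon\otimes I)\circ\Delta=I$, under the canonical identification $k\otimes H\cong H$. Since $\epsilon\otimes I$ maps $H_p\otimes H_q$ into $H_q$, the only summand of $\Delta x$ that contributes a term of degree $n$ to $(\epsilon\otimes I)\Delta x$ is $(\Delta x)_{0,n}$; as $x$ is homogeneous of degree $n$, comparing degree-$n$ components yields $x=(\epsilon\otimes I)(1\otimes a)=\epsilon(1)\,a=a$, where $\epsilon(1)=1$ because $\epsilon$ is a unital algebra morphism. Thus $(\Delta x)_{0,n}=1\otimes x$, and symmetrically the right counit identity $(I\otimes\epsilon)\circ\Delta=I$ gives $(\Delta x)_{n,0}=x\otimes 1$. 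Setting $\tilde\Delta x:=\sum_{p+q=n,\;p,q\geq 1}(\Delta x)_{p,q}$, which by construction lies in $\bigoplus_{p+q=n,\,p,q>0}H_p\otimes H_q$, we obtain $\Delta x=1\otimes x+x\otimes 1+\tilde\Delta x$.

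I do not expect a genuine obstacle: the whole argument is bookkeeping on axioms already recorded above. The two points that need a moment's care are (i) that connectedness yields the honest equality $H_0=k\cdot 1$, not merely $\dim_k H_0=1$ (this is what lets us write the extreme terms with a literal $1$), and (ii) the degree-counting in the counit step, namely that $\epsilon\otimes I$ is degree-preserving in the second tensor slot under the standard identification, so that exactly one homogeneous piece of $\Delta x$ survives in top degree. Everything else is routine.
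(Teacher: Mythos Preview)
Your argument is correct and is the standard proof of this fact. The paper itself does not supply a proof: the proposition is quoted with a citation to Manchon and left unproved, so there is nothing to compare your approach against beyond noting that yours is precisely the expected one.

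One tiny remark on presentation: in the counit step you could streamline by observing that the equality $(\epsilon\otimes I)\Delta x = x$ already forces the components in each $H_q$ with $q<n$ to vanish (since the sum $H=\bigoplus H_q$ is direct), rather than phrasing it as ``comparing degree-$n$ components''; but this is purely cosmetic and your version is fine as written.
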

\noindent We will often use the \emph{Sweedler notation} for the coproduct: 
$$\Delta x= \sum_{(x)} x_{(1)} \otimes x_{(2)}\quad \mbox{and} \quad \tilde{\Delta} x = \sum_{(x)} x' \otimes x''.$$

\begin{definition}
A \emph{Hopf algebra} is a bialgebra $(H,\mu,\eta,\Delta,\epsilon)$ together with an antihomomorphism $S$ on $H$, called the \emph{antipode}, with the property given by the commutativity of the following diagram: 
\begin{diagram}
& & H\otimes H & & \rTo^{\scriptstyle S\otimes 1} & & H\otimes H \\
& \ruTo^{\scriptstyle\Delta} & & & & & & \rdTo>{\scriptstyle\mu} \\
H & & \rTo^{\scriptstyle \varepsilon} & & k & & \rTo^{\scriptstyle \eta} & & H \\
& \rdTo<{\scriptstyle\Delta} & & & & & & \ruTo>{\scriptstyle \mu} \\
& & H\otimes H & & \rTo_{\scriptstyle 1\otimes S} & & H\otimes H \\
\end{diagram}
\end{definition}

\noindent A Hopf algebra is graded if it is graded as a bialgebra and the antipode satisifies $S(H_n) \subset H_n.$ If a bialgebra is graded and connected, then it is automatically a graded Hopf algebra:
\begin{proposition}[{\cite{manchon06haf}}]~\label{gradedantipode}
Any connected graded bialgebra is a Hopf algebra. The antipode $S$ is given recursively by $S(1) = 1$ and $$S(x) = -x - \sum_{(x)} S(x')x''$$ for $x \in \ker{\epsilon}$.
\end{proposition}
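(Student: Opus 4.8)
The plan is to identify the antipode with the inverse of the identity map in the \emph{convolution algebra} and to produce that inverse by induction on the grading. First I would equip $\End_k(H)$ with the convolution product $f \star g := \mu \circ (f \otimes g) \circ \Delta$; coassociativity of $\Delta$ together with associativity of $\mu$ makes $\star$ associative, and the counit axioms make $e := \eta \circ \epsilon$ a two-sided unit for it. In this language the antipode diagram in the definition of a Hopf algebra says precisely that $S$ is a two-sided $\star$-inverse of $\Id_H$: the upper path is $\mu \circ (S \otimes \Id_H) \circ \Delta = S \star \Id_H$ and the lower path is $\Id_H \star S$, both required to equal $\eta \circ \epsilon = e$. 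So the content of the proposition is that $\Id_H$ is $\star$-invertible, plus the standard fact that the inverse is a grading-preserving algebra antihomomorphism. I would also note at the outset that connectedness forces $\ker \epsilon = \bigoplus_{n \ge 1} H_n$ (a short induction on $n$ from the counit axiom and the decomposition in the Proposition above), so that ``$x \in \ker\epsilon$'' in the statement means exactly that $x$ has strictly positive degree, and $e(x) = 0$ for such $x$.

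Next I would construct a \emph{left} $\star$-inverse $S$ by induction on $n$, defining it on $H_n$. Set $S(1) = 1$, which handles $H_0 = k \cdot 1$. Assume $S$ has been defined on $H_0 \oplus \cdots \oplus H_{n-1}$, and take $x \in H_n$. By the Proposition above, $\Delta x = 1 \otimes x + x \otimes 1 + \tilde\Delta x$ with $\tilde\Delta x = \sum_{(x)} x' \otimes x'' \in \bigoplus_{p+q=n,\; p,q \ge 1} H_p \otimes H_q$; the key point is that each $x'$ has degree $\le n-1$, so $S(x')$ is already available. Define $S(x) := -x - \sum_{(x)} S(x') x''$ and extend linearly to $H_n$. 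Then a direct computation gives, for $x \in \ker\epsilon$,
\[
(S \star \Id_H)(x) = S(1)x + S(x)\cdot 1 + \sum_{(x)} S(x')x'' = x + S(x) + \sum_{(x)} S(x')x'' = 0 = e(x),
\]
while $(S \star \Id_H)(1) = S(1) = 1 = e(1)$. Hence $S \star \Id_H = e$, so $S$ is a left $\star$-inverse of $\Id_H$ and satisfies the stated recursion.

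To upgrade this to a genuine antipode I would run the mirror-image recursion: $S'(1) = 1$ and $S'(x) = -x - \sum_{(x)} x' S'(x'')$ for $x \in \ker\epsilon$, which by the same argument yields a \emph{right} $\star$-inverse, $\Id_H \star S' = e$. In the monoid $(\End_k(H), \star, e)$ a left inverse and a right inverse of the same element must coincide:
\[
S = S \star e = S \star (\Id_H \star S') = (S \star \Id_H) \star S' = e \star S' = S'.
\]
So $S$ is a two-sided $\star$-inverse of $\Id_H$, i.e. the antipode. Finally, that $S$ is a grading-preserving algebra antihomomorphism with $S(1) = 1$ is routine: $S(1) = 1$ and $S(H_n) \subseteq H_n$ are immediate from the construction, while $S \circ \mu$ and $\mu \circ (S \otimes S) \circ \tau$ are both seen to be $\star$-inverses of $\mu$ in the convolution algebra $\Hom_k(H \otimes H, H)$ (with $H \otimes H$ carrying its tensor-product coalgebra structure), hence equal by uniqueness of $\star$-inverses.

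The step I expect to be the main obstacle — or rather the one needing genuine care rather than bookkeeping — is exactly this two-sidedness: the recursion visibly produces only a one-sided inverse, and turning it into a true antipode requires the convolution-algebra formalism, via the mirror recursion and the monoid cancellation argument (equivalently, uniqueness of $\star$-inverses). Everything else is straightforward: well-definedness of the induction rests entirely on the graded-connected hypothesis guaranteeing that $\tilde\Delta$ strictly lowers degree, and the antihomomorphism property is standard Hopf-algebra bookkeeping.
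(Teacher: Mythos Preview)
Your proposal is correct and is the standard argument for this result. Note that the paper does not actually give its own proof of this proposition: it is stated with a citation to \cite{manchon06haf} and no proof appears, so there is nothing to compare against beyond observing that your argument is the expected one from the cited reference.
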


\subsection{Examples: The concatination and shuffle Hopf algebras}\label{ex:shufalg1}
Recurring in the sequel are Hopf algebras built from letters in an alphabet. We follow the notation of Reutenauer~\cite{reutenauer93fla}. Consider a finite or infinite alphabet  of letters $\A = \{a,b,c,\ldots\}$. We write $\A^*$ for the collection of all empty or non-empty words over $\A$, where $\one$ is the empty word. Let $k\langle \A \rangle$ be the $k$-algebra of non-commutative polynomials in $\A$. A polynomial $P\in k\langle \A \rangle$ will be written as a sum $$P = \sum_{\omega \in A^*} (P, \omega) \omega,$$ where $(P, \omega) \in k$ is non-zero only for a finite number of $\omega$. Let $P,Q\in k\langle \A \rangle$. The product of $P$ and $Q$, written as $PQ$, has coefficients $$(PQ, \omega) = \sum_{\omega = uv} (P, u) (Q, v).$$ 

The $k$-linear dual space  denoted $k\langle\langle\A\rangle\rangle:=\Hom_k(k\langle\A\rangle,k)$ is identified with all infinite $k$-linear combinations of words.  An $\alpha \in k\langle\langle\A\rangle\rangle$ can be written as an infinite series
\[\alpha = \sum_{\omega\in \A^*} (\alpha,\omega)\omega,\] where $(\alpha,\omega)\equiv \alpha(\omega)\in k$ and $(\cdot,\cdot)$ is the dual pairing defined such that words in $\A^*$ are orthogonal, $(\omega_1,\omega_2) = \delta_{\omega_1,\omega_2}$ for all $\omega_1,\omega_2\in \A^*$.

We define two different associative products on $k\langle\A\rangle$. The \emph{concatenation product} $\omega_1,\omega_2\mapsto \omega_1\omega_2$ obtained by concatination of words and the \emph{shuffle product} 
$\omega_1,\omega_2\mapsto \omega_1\sqcup\omega_2$ obtained by linearly combining all possible \emph{shuffles} of the two words i.e.\ combinations where the letters within each word are not internally permuted: $$abc \sqcup de = abcde + abdce + adbce + dabce + abdec + adbec + dabec + adebc + daebc + deabc.$$ The shuffle product can be defined recursively as
$$(a\omega_1) \sqcup (b\omega_2) = a(\omega_1 \sqcup b\omega_2) + b(a\omega_1 \sqcup \omega_2),$$ where $a,b \in \A$ and $\omega_1,\omega_2 \in \A^*$. The unit of both concatenation and shuffle is the empty word $\one$.

By dualization of these products we obtain the \emph{deconcatenation} and the \emph{deshuffle} coproducts. The deconcatination coproduct $\cpd\colon k\langle\A\rangle\rightarrow k\langle\A\rangle\tpr k\langle\A\rangle$ is defined for $\omega=a_1a_2\cdots a_k\in \A^*$ as:
\begin{equation}
\cpd(\omega) =  \sum_{i=1}^{k} a_1\cdots a_i\tpr a_{i+1}\cdots a_k .
\end{equation}
This coproduct is the dual of the concatenation product, so for any $P,Q\in k\langle\A\rangle$
$$(PQ,\omega) = ( P\tpr Q,\cpd(\omega)) = \sum_{(\omega)_{\cpd}}(P,\omega_{(1)})(Q,\omega_{(2)}) .$$
The deshuffle product $\cpds\colon k\langle\A\rangle\rightarrow k\langle\A\rangle\tpr k\langle\A\rangle$ is similarly defined such that
$$(P\sqcup Q,\omega) = ( P\tpr Q,\cpds(\omega)) = \sum_{{(\omega)}_{\cpds}}(P,\omega_{(1)})(Q,\omega_{(2)}) .$$
The two coproducts can also be characterized by requiring that the letters in the alphabet $\A$ are primitive, i.e. that $\Delta(a) = 1 \otimes a + a \otimes 1$ for $a \in \A$, and then extending $\Delta$ to be a homomorphism with respect to either of the two products on $k\langle\A\rangle$.
We refer to~\cite{reutenauer93fla} for explicit presentations of the deshuffle coproduct.

We remark that the vector space $k\langle\A\rangle$ can now be turned into Hopf algebras in two different ways. The cocommutative \emph{concatenation Hopf algebra} is obtained by taking the concatenation as product and the deshuffle as coproduct. The commutative \emph{shuffle Hopf algebra} $\Hsh(\A)$ is obtained by taking the shuffle as product and the deconcatenation as coproduct. Both these Hopf algebras share the same antipode:
\begin{equation}
S(a_1a_2\ldots a_k) = (-1)^k a_ka_{k-1}\ldots a_1,
\end{equation}
and in both cases the unit and counit is given by $\eta(1) = \one$ and $\epsilon(\one) = 1$, $\epsilon(\omega) = 0$ for all $\omega\in \A^*\backslash \one$. We write $\Hsh$ when $\A$ is understood.

The vector space $k\langle\A\rangle$ can be identified with the vector space underlying the tensor algebra $T(V)$ on the vector space $V$ generated by the alphabet $\A$. The two algebra structures (concatination and shuffling) correspond to the usual algebra structures given to the tensor algebra $T(V)$ and the tensor coalgebra $T^c(V)$, respectively.


\subsection{Characters and endomorphisms}

This section is based on \cite{ebrahimi-fard07alt}. See also \cite{manchon06haf}, \cite{burgunder2008eia} and \cite{patras02oda}.

Let $(H, \mu, \eta, \Delta, \epsilon)$ be a graded bialgebra, and $(A, \cdot, \eta_A)$ an algebra. The set $\Hom_k(H,A)$ of linear maps from $H$ to $A$ sending $\eta_H(1) =: 1_H$ to $\eta_A(1)=:1_A$ has an algebra structure given by the \emph{convolution product}: $$\alpha * \beta =  \mu_A \circ (\alpha \otimes \beta) \circ \Delta.$$ The convolutional unit is the composition of the counit of $H$ and the unit of $A$: $\delta := \eta_A \circ \epsilon$. The convolution can be written using the Sweedler notation: $$\alpha * \beta = \sum_{(x)} \alpha(x_{(1)}) \cdot \beta(x_{(2)}),$$ 
from which we find
$\alpha\ast \delta = \delta\ast\alpha = \alpha$. The \emph{unital algebra morphisms} from $H$ to $A$ consists of all $\alpha\in\Hom_k(H,A)$ such that
$\alpha(1_H) = 1_A$ and $\alpha(\mu(h, h')) = \alpha(h)\dpr\alpha(h')$ for all $h,h'\in H$.

\begin{proposition}[{\cite{manchon06haf}}]\label{A-valchar}
Let $H$ be a graded Hopf algebra and $A$ a commutative algebra. The set $\Hom_{Alg}(H,A)$ of unital algebra morphisms from $H$ to $A$ equipped with the convolution product, forms a group, $G(H,A),$ called the \emph{group of $A$-valued characters of $H$}. The inverse of an element $\alpha$ is given by $$\alpha^{*-1} = \alpha \circ S,$$ where $S$ is the antipode of $H$.
\end{proposition}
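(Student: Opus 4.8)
The plan is to verify the three claims in turn: that $\Hom_{Alg}(H,A)$ is closed under the convolution product, that $\delta = \eta_A\circ\epsilon$ is a unital algebra morphism and serves as the identity, and that $\alpha\circ S$ is a two-sided convolution inverse for $\alpha$. The commutativity of $A$ and the fact that $\Delta$ and $\epsilon$ are algebra morphisms (the bialgebra compatibility diagrams) are the essential inputs.

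First I would show closure. Given $\alpha,\beta\in\Hom_{Alg}(H,A)$, one checks $(\alpha*\beta)(1_H) = \alpha(1_H)\cdot\beta(1_H) = 1_A$ using $\Delta(1_H)=1_H\otimes 1_H$, and then that $\alpha*\beta$ respects products: starting from $(\alpha*\beta)(\mu(h,h'))$, apply the bialgebra axiom $\Delta\circ\mu = (\mu\otimes\mu)\circ(I\otimes\tau\otimes I)\circ(\Delta\otimes\Delta)$ to rewrite $\Delta(hh')$ in terms of $\Delta h$ and $\Delta h'$, then use that $\alpha$ and $\beta$ are algebra morphisms to pull the products out, and finally use commutativity of $A$ to reorder the four factors back into the form $(\alpha*\beta)(h)\cdot(\alpha*\beta)(h')$. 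This reordering step is the one place commutativity of $A$ is genuinely needed, and it is the main — though still routine — obstacle. Associativity of $*$ on all of $\Hom_k(H,A)$ follows from coassociativity of $\Delta$ and associativity of $\mu_A$, and $\delta\in\Hom_{Alg}(H,A)$ because $\epsilon$ is an algebra morphism and $\eta_A$ is the unit; the identity property $\alpha*\delta=\delta*\alpha=\alpha$ is already recorded in the text just before the proposition.

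It remains to identify the inverse. For any $\alpha\in G(H,A)$, compute $\alpha*(\alpha\circ S) = \mu_A\circ(\alpha\otimes\alpha)\circ(I\otimes S)\circ\Delta = \alpha\circ\mu_H\circ(I\otimes S)\circ\Delta$, where the last equality uses that $\alpha$ is an algebra morphism to merge $(\alpha\otimes\alpha)\circ(I\otimes S)$ with $\mu_H$ in front. By the antipode axiom (the defining diagram for $S$), $\mu_H\circ(I\otimes S)\circ\Delta = \eta_H\circ\epsilon$, so the composite is $\alpha\circ\eta_H\circ\epsilon = \eta_A\circ\epsilon = \delta$. The computation of $(\alpha\circ S)*\alpha = \delta$ is symmetric, using the other branch $\mu_H\circ(S\otimes I)\circ\Delta = \eta_H\circ\epsilon$ of the antipode diagram. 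Hence $\alpha^{*-1} = \alpha\circ S$ and $G(H,A)$ is a group. (One should also note that $\alpha\circ S$ is itself a unital algebra morphism: $S$ is an antihomomorphism, so $S(hh') = S(h')S(h)$, and composing with $\alpha$ and then using commutativity of $A$ gives $\alpha(S(hh')) = \alpha(S h')\alpha(S h) = \alpha(S h)\alpha(S h')$, confirming $\alpha\circ S\in\Hom_{Alg}(H,A)$ so that the group is well-defined.)
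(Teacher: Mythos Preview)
Your proof is correct and is the standard argument for this well-known fact. Note, however, that the paper does not actually supply its own proof of this proposition: it is quoted from~\cite{manchon06haf} and stated without proof, so there is no paper argument to compare against. Your write-up covers all the necessary points---closure (where commutativity of $A$ is used to reorder the four factors), the identity, the inverse via the antipode axiom, and the check that $\alpha\circ S$ lands back in $\Hom_{Alg}(H,A)$---and would serve perfectly well as the omitted verification.
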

\noindent In the special case $A = k$, we get the group of characters of $H$, written as $G(H):=G(H,k)$.  The grading on $H$ splits the group of $A$-valued characters into graded components: $$G(H,A) \cong \prod_{n \geq 0} \Hom_{Alg}(H_n, A).$$ This is not a graded vector space, but rather the completion of one (see e.g. \cite{ebrahimi-fard07alt}), but we will still refer to it as a graded vector space. The restriction of a character $\alpha: H \rightarrow A$ to the degree $n$ component $H_n$ of $H$ will be denoted by $\alpha_n$.

\subsubsection{Infinitesimal characters, the exponential and the logarithm}

The \emph{infinitesimal $A$-valued characters}, written $\g(H,A)$ are the linear maps $\alpha$ from $H$ to $A$ such that: $$\alpha(\mu(h,h')) = \alpha(h)\cdot \delta(h') + \delta(h)\cdot \alpha(h'),$$ where $\delta=\eta_A \circ \epsilon$. This is a Lie algebra under the bracket induced by the convolution product: $[\alpha, \beta] = \alpha * \beta - \beta * \alpha.$ 
In the special case where $A=k$ we write $\g(H)$ for $\g(H,k)$.

The characters and the infinitesimal characters are connected via the \emph{exponential} and the \emph{logarithmic} map. For $\alpha \in \Hom_k(H,A)$, the exponential and logarithm with respect to convolution are given by the formal series: 
\begin{eqnarray*}
\exp^*(\alpha) &=& \sum_{n \geq 0} \frac{1}{n!} \alpha^{*n}\\
\log^*(\delta+\alpha) &=& \sum_{n \geq 1} \frac{(-1)^{n-1}}{n} \alpha^{*n}.
\end{eqnarray*}
If $H$ is graded and connected, and if $\alpha(1) = 0$, where $1 \in k = H_0$, then  $\alpha^{*k} = \alpha \ast \cdots\ast \alpha = 0$ on $H_n$ for $n < k$, and therefore both these sums are finite when restricted to $H_n$. The maps $\exp^*$ and $\log^*$ give a bijection between $G(H,A)$ and $\g(H,A)$.

\begin{example}\label{ex:shufalg2}Let $\Hsh$ denote the shuffle algebra over $\A$. Consider the dual space $k\langle\langle\A\rangle\rangle$ equipped with the convolution product
\[(\alpha\ast\beta,\omega) = \sum_{(\omega)_{\cpd}} (\alpha,\omega_{(1)})(\beta,\omega_{(2)}) = \sum_{\omega=\omega_1\omega_2}(\alpha,\omega_1)(\beta,\omega_2).\]
Note that convolution is just concatenation of series $\alpha\ast\beta = \alpha\beta$.
The characters and infinitesimal characters $\g(\Hsh),G(\Hsh)\subset k\langle\langle\A\rangle\rangle$ are given as
\begin{eqnarray*}
\g(\Hsh) & = & \stset{\alpha\in k\langle\langle\A\rangle\rangle}{\alpha(\one) = 0 \mbox{ and } \alpha(\omega_1\sqcup \omega_2) = 0 \mbox{ for all } \omega_1,\omega_2\in \A^*\backslash \one}\\
G(\Hsh) & = & \stset{\alpha\in k\langle\langle\A\rangle\rangle}{\alpha(\one) = 1 \mbox{ and } \alpha(\omega_1\sqcup \omega_2) = \alpha(\omega_1)\alpha(\omega_2) \mbox{ for all } \omega_1,\omega_2\in \A^*\backslash \one} .
\end{eqnarray*}
The convolutional unit $\delta$ is given as $(\delta,\one) = 1$ and $(\delta,\omega) = 0$ for all $\omega\in\A\backslash \one$.
The logarithm of $\alpha\in G(\Hsh)$ can be computed as $\log(\alpha) = \sum_{n>0} \frac{(-1)^{n-1}}{n} (\alpha-\delta)^{\ast n}$. For any $\omega\in \A^*$
we find that $(\log(\alpha),\omega)$ is given by a finite sum expressed in terms of the Eulerian idempotent.
\end{example}

\subsubsection{Eulerian idempotent}\label{eulidem}
Let $H$ be a commutative, connected and graded Hopf algebra. Consider $\End_k(H) = \Hom_k(H,H)$ equipped with the convolution product $\ast$. Let $\id\in\End_{k}(H)$ be the identity endomorphism and $\delta = \eta\circ \epsilon\in\End_{k}(H)$ the unit of convolution.

\begin{definition}[{\cite{loday97ch}}]
The Eulerian idempotent $e\in \End(H)$ is given by the formal power series $$e := \log^*(\Id) = J - \frac{J^{*2}}{2} + \frac{J^{*3}}{3} + \cdots (-1)^{i+1} \frac{J^{*i}}{i} + \cdots,$$ where $J=\Id-\delta.$
\end{definition}

\begin{proposition}[{\cite{loday97ch}}]
For any commutative graded Hopf algebra $H$, the element $e \in \End_k(H)$ defined above is an idempotent: $e \circ e = e$.
\end{proposition}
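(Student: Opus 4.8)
The plan is to exhibit $e$ as a genuine logarithm in a context where the target of $\log^*$ is already known to be an idempotent, rather than verifying $e\circ e = e$ by a direct (and messy) double-series manipulation. The key observation is that $J = \Id - \delta$ vanishes on $H_0 = k$ and maps each $H_n$ ($n>0$) into $\bigoplus_{0\le p<n}H_p$ via the reduced coproduct, so $J^{*i}$ vanishes on $H_n$ for $i>n$; hence the series defining $e$ is locally finite and $e$ is a well-defined element of $\End_k(H)$ with $e(1)=0$, and likewise $\Id = \delta + J = \exp^*(e)$ with the exponential series also locally finite. So far this uses only that $H$ is graded and connected.

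The next step is where commutativity enters. Since $H$ is a commutative algebra, the convolution product on $\End_k(H)$ — equivalently on $\Hom_k(H,H)$ with $H$ regarded as a commutative target algebra — interacts well with the coalgebra structure of $H$ in the source. Concretely, I would show that $J$ is an \emph{infinitesimal character}-like object in the sense that the subspace of $\End_k(H)$ spanned by maps sending products to zero appropriately (or: that the image of $\log^*$ lands in the derivations/primitive part) is stable, and that $e$, being $\log^*$ of an algebra endomorphism ($\Id$ is trivially one), is a \emph{projection onto the primitives} $\Prim(H)$ composed with the canonical surjection. The cleanest route: invoke the general fact (Proposition on characters, adapted to $A=H$) that for a commutative graded connected Hopf algebra the exponential/logarithm give mutually inverse bijections between the "group-like" endomorphisms (algebra morphisms fixing $1$) and the "Lie-like" ones (the $\partial$ satisfying $\partial\circ\mu = \mu\circ(\partial\otimes\delta + \delta\otimes\partial)$), and that $e=\log^*(\Id)$ therefore lies in this Lie algebra.

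With $e$ identified as a "Lie-like" endomorphism, idempotency follows from the classical argument that the first Eulerian idempotent is the projection associated to the Poincaré--Birkhoff--Witt-type decomposition: one shows $e\circ e = e$ by checking it degree by degree, using that $e = J - \tfrac12 J^{*2}+\cdots$ and that $e$ annihilates $1$ together with the fact that $e$ restricted to $\Prim(H)$ is the identity while $e$ kills $\ker\epsilon \cdot \ker\epsilon$ (nontrivial products), which is exactly the statement that $e$ composed with itself reproduces $e$ on the associated graded. Equivalently, and perhaps most efficiently, I would use the Dynkin–Specht–Wever style identity relating $\Id$, $e$ and the higher Eulerian idempotents $e^{(n)} = e^{*n}/n!$, namely $\sum_{n\ge1}e^{(n)} = \Id - \delta$ together with orthogonality $e^{(m)}\circ e^{(n)} = \delta_{mn} e^{(n)}$, of which $e\circ e = e$ is the $n=1$ case; this orthogonality is a formal consequence of $\Id = \exp^*(e)$ and commutativity via comparing the two expansions of $\exp^*(e)$.

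\textbf{Main obstacle.} The genuinely nontrivial point is the use of commutativity to guarantee that $\log^*(\Id)$ actually lands in the Lie algebra of infinitesimal (derivation-like) endomorphisms — without this, $e\circ e=e$ is false. Establishing this cleanly requires the identity $\Delta\circ(\alpha*\beta) \ne (\alpha*\beta)\otimes$-type compatibilities to be replaced by the correct statement that, for $H$ commutative, the set of algebra endomorphisms is closed under $*$ and $\log^*$ carries it into the derivations; making that argument precise (e.g.\ via the bialgebra axiom applied in $\End_k(H)$, or by citing the cited sources \cite{loday97ch}, \cite{ebrahimi-fard07alt}) is the crux, after which the degree-wise verification of $e\circ e = e$ is routine bookkeeping with the locally finite series.
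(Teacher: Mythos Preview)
The paper does not give its own proof of this proposition: it is stated with a citation to \cite{loday97ch} and no argument is supplied. There is therefore nothing in the paper to compare your proposal against.

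As to the proposal itself, your overall strategy --- use commutativity to show that $e=\log^*(\Id)$ lands among the infinitesimal-character-like endomorphisms, then invoke the orthogonality of the higher Eulerian idempotents $e^{(i)}=e^{*i}/i!$ --- is indeed the standard route and is essentially what one finds in Loday. One small slip: you write that ``$J=\Id-\delta$ \ldots\ maps each $H_n$ ($n>0$) into $\bigoplus_{0\le p<n}H_p$,'' but $J$ is the identity on $H_n$ for $n>0$ and preserves degree. The local finiteness you want comes instead from the fact that $J^{*i}$ factors through the $i$-fold reduced coproduct $\tilde{\Delta}^{i-1}$, which vanishes on $H_n$ once $i>n$; your conclusion is correct, the stated reason is not. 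Beyond that, the sketch is a reasonable outline, though the decisive step --- that commutativity of $H$ forces $e$ to annihilate nontrivial products, i.e.\ $e(xy)=0$ for $x,y\in\ker\epsilon$ --- is asserted rather than argued, and that is exactly where the substance of the proof lies.
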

\noindent The practical importance of the Eulerian idempotent  in numerical analysis arises in backward error analysis, where the following lemma provides a computational formula for the logarithm:

\begin{proposition} For $\alpha\in G(H)$ and $h\in H$, we have \[\log^\ast(\alpha)(h) = \alpha(e(h)).\] In other words, the logarithm can be written as right composition with the eulerian idempotent: $$\log^{\ast} = \_ \circ e: G(H) \rightarrow \g(H).$$
\end{proposition}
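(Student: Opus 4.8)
The plan is to unwind both sides of the claimed identity $\log^\ast(\alpha)(h) = \alpha(e(h))$ directly from the definitions of the convolution exponential/logarithm on $G(H)$ and the Eulerian idempotent $e = \log^\ast(\Id)$ on $\End_k(H)$, and to observe that they are ``the same formula'' once one tracks where the convolution product lives. The key point is a compatibility between the convolution product on $\Hom_k(H,A)$ (here $A = k$) and the convolution product on $\End_k(H)$: for $\alpha \in \Hom_k(H,k)$ and $f, g \in \End_k(H)$ one has $\alpha \circ (f \ast g) = (\alpha \circ f) \ast (\alpha \circ g)$, where the left $\ast$ is the convolution in $\End_k(H)$ and the right $\ast$ is the convolution in $\Hom_k(H,k)$. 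This follows immediately by expanding both sides with Sweedler notation and using coassociativity of $\Delta$ together with the fact that $\mu_k$ is just multiplication in $k$; the only subtlety is that we need $\alpha$ to respect the product, i.e.\ $\alpha \in G(H)$ rather than an arbitrary linear map, so that $\alpha \circ \mu_H = \mu_k \circ (\alpha \otimes \alpha)$ — precisely the character property. So the first step is to state and verify this ``$\alpha$ is an algebra map from $(\End_k(H), \ast, \circ)$-convolution to $(\Hom_k(H,k),\ast)$-convolution'' lemma.

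Next I would check the base case of this compatibility at the level of the convolutional units and of $J = \Id - \delta$: since $\alpha$ is a character, $\alpha \circ \delta_{\End} = \alpha \circ \eta_H \circ \epsilon_H = \eta_k \circ \epsilon_H$ (because $\alpha(1_H) = 1_k$), which is exactly the convolutional unit $\delta$ in $\Hom_k(H,k)$; and $\alpha \circ \Id = \alpha$. Hence $\alpha \circ J = \alpha - \delta = \alpha \circ (\Id - \delta)$, consistent on the nose. Combining this with the multiplicativity lemma and linearity of $\alpha \circ (-)$, I get by induction $\alpha \circ (f^{\ast i}) = (\alpha \circ f)^{\ast i}$ for all $i$, and therefore $\alpha \circ e = \alpha \circ \log^\ast(\Id) = \alpha \circ \sum_{i \ge 1} \frac{(-1)^{i+1}}{i} J^{\ast i} = \sum_{i \ge 1} \frac{(-1)^{i+1}}{i} (\alpha \circ J)^{\ast i} = \sum_{i \ge 1} \frac{(-1)^{i+1}}{i}(\alpha - \delta)^{\ast i} = \log^\ast(\alpha)$, which is exactly the desired statement once both are evaluated on $h$. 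The interchange of $\alpha \circ (-)$ with the infinite sum is harmless because $H$ is graded and connected: on $H_n$ all but finitely many terms vanish (as recalled in the discussion of $\exp^\ast$, $\log^\ast$ just before Example~\ref{ex:shufalg2}), so everything is a finite sum on each graded piece.

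The last point to address is the codomain claim, namely that $\_ \circ e$ actually lands in $\g(H)$, the infinitesimal characters: this is immediate from the identity just proved, since $\log^\ast$ of a character is an infinitesimal character (stated in the excerpt as ``$\exp^\ast$ and $\log^\ast$ give a bijection between $G(H,A)$ and $\g(H,A)$''), but one can also see it directly from $\alpha \circ e$ by using that $e$ is the projector onto the primitives in the graded-dual sense — though invoking the established bijection is cleaner and avoids re-deriving that $e$ kills products.

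I expect the only real obstacle to be getting the multiplicativity lemma $\alpha \circ (f \ast g) = (\alpha \circ f) \ast (\alpha \circ g)$ stated precisely and recognizing that it needs the character hypothesis on $\alpha$ — everything after that is a formal manipulation of finite sums on each $H_n$. A secondary, purely bookkeeping point is making sure the ``$1$'' in ``$\alpha(1) = 0$'' conventions and the two different $\delta$'s (one in $\End_k(H)$, one in $\Hom_k(H,k)$) are matched up correctly under $\alpha \circ (-)$; this is where the condition $\alpha(1_H) = 1_k$ does its work.
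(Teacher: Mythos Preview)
Your proposal is correct and follows essentially the same approach as the paper: the paper's one-line computation $((\alpha-\delta)^{\ast l},\omega) = \mu^l_k\circ (\alpha\otimes\cdots\otimes\alpha)\circ \tilde{\Delta}^l\omega = \alpha\circ\mu^l_H\circ \tilde{\Delta}^l\omega = \alpha\circ J^{\ast l}\omega$ is exactly the $l$-fold version of your multiplicativity lemma $\alpha\circ(f\ast g)=(\alpha\circ f)\ast(\alpha\circ g)$, with the character property $\alpha\circ\mu_H=\mu_k\circ(\alpha\otimes\alpha)$ as the key step in both. Your presentation is somewhat more carefully organized (isolating the lemma, checking the units, handling finiteness on graded pieces), but the mathematical content is identical.
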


\noindent The result follows from the following computation, which uses that $\alpha$ is a homomorphism:
\[((\alpha-\delta)^{\ast l},\omega) = \mu^l_k\opr (\alpha\tpr\cdots\tpr\alpha)\opr \tilde{\Delta}^l\omega = \alpha\opr\mu^l_H\opr \tilde{\Delta}^l\omega = \alpha\opr J^{\ast l}\omega,\]
where $(-)^l$ denotes $l$-fold application.

\subsubsection{The graded Dynkin operator}\label{dynkinop}
There is another  bijection between the infinitesimal characters and the characters in any commutative graded Hopf algebra $H$, described in \cite{ebrahimi-fard07alt}. The bijection is given in terms of the \emph{Dynkin operator} $D: H \rightarrow H$. 

Classically, the Dynkin operator is a map $D: k\langle\A\rangle \rightarrow Lie(\A)$, where $Lie(\A)=\g(\Hsh)\cap k\langle\A\rangle$ are the Lie polynomials. The classical Dynkin operator is given by left-to-right bracketing: $$D(a_1...a_n) = [\dots[[a_{1},a_{2}], a_{3}], \dots, a_{n}], \quad\mbox{where $[a_i,a_j] = a_ia_j-a_ja_i$}.$$ 
Letting $Y(\omega) = \#(\omega)\omega$ denote grading operator, where $\#(\omega)$ is word length, it is known that the \emph{Dynkin idempotent}, given as $Y^{-1}D$, is an idempotent projection on the subspace of Lie polynomials.
As in \cite{ebrahimi-fard07alt}, the Dynkin operator can be written as the convolution of the antipode $S$ and the grading operator $D = S\ast Y$. This description can be generalized to any graded, connected and commutative Hopf algebra $H$:

\begin{definition}
Let $H$ be a graded, commutative and connected Hopf algebra with grading operator $Y: H \rightarrow H$.  The \emph{Dynkin operator} is the map $D: H \rightarrow H$ given as $$D := S * Y.$$
\end{definition}

\begin{lemma}[{\cite{ebrahimi-fard07alt}}]
The Dynkin operator is a $H$-valued infinitesimal character of $H$.
\end{lemma}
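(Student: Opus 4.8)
\medskip
\noindent\textbf{Proof sketch.}
The plan is to verify the defining identity of an $H$-valued infinitesimal character directly from the formula $D = S\ast Y = \mu\circ(S\otimes Y)\circ\Delta$: one must show that for all $h,h'\in H$,
\[
D(\mu(h,h')) = D(h)\cdot\delta(h') + \delta(h)\cdot D(h'),\qquad \delta = \eta\circ\epsilon .
\]
Linearity of $D$ is automatic (it is a composite of linear maps), so this twisted multiplicativity is the whole content. The computation rests on three structural facts, which I would isolate at the start: (i) $\Delta$ is an algebra morphism, so $\Delta(hh') = \sum h_{(1)}h'_{(1)} \otimes h_{(2)}h'_{(2)}$ in Sweedler notation; (ii) since $H$ is \emph{commutative}, the antipode $S$ --- always an algebra anti-morphism --- is an honest algebra morphism, $S(xy) = S(x)S(y)$; and (iii) the grading operator $Y$ is a derivation of the product, because $\mu(H_p,H_q)\subset H_{p+q}$ forces $Y(xy) = (Yx)y + x(Yy)$ on homogeneous elements, hence everywhere by linearity.

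Expanding $D(hh')$ with (i), then applying (ii) on the first tensor leg and the Leibniz rule (iii) on the second, splits $D(hh')$ into
\[
\sum S(h_{(1)})S(h'_{(1)})\,Y(h_{(2)})\,h'_{(2)} \;+\; \sum S(h_{(1)})S(h'_{(1)})\,h_{(2)}\,Y(h'_{(2)}) .
\]
In the first sum, commutativity of $H$ lets me regroup the four factors as $\bigl(S(h_{(1)})Y(h_{(2)})\bigr)\bigl(S(h'_{(1)})h'_{(2)}\bigr)$ and then sum over the two Sweedler families independently; this identifies the term with $D(h)\cdot(S\ast\Id)(h')$. By the antipode axiom $S\ast\Id = \eta\circ\epsilon = \delta$, so the first sum equals $D(h)\cdot\delta(h') = \epsilon(h')\,D(h)$. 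Symmetrically the second sum equals $(S\ast\Id)(h)\cdot D(h') = \delta(h)\cdot D(h') = \epsilon(h)\,D(h')$, and adding the two yields the claimed identity. In particular $D$ annihilates products of positive-degree elements and $D(1_H) = 0$, consistent with $D$ being an infinitesimal character.

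The only delicate point is bookkeeping: keeping the Sweedler indices of $h$ and $h'$ separate, and invoking commutativity of $H$ in the two distinct roles it plays --- to turn $S$ into an algebra morphism in step (ii), and to rearrange the products inside $H$ when regrouping --- while using the correct antipode triangle $S\ast\Id = \eta\epsilon$ (here both triangles coincide anyway). No analytic or grading-completion issues arise, since for fixed $h,h'$ all sums involved are finite.
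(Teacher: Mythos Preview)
Your argument is correct and is the standard verification. Note, however, that the paper does not actually give a proof of this lemma: it is simply quoted from \cite{ebrahimi-fard07alt}, so there is no ``paper's own proof'' to compare against. Your direct computation --- using that $\Delta$ is multiplicative, that commutativity of $H$ makes $S$ an algebra morphism, that $Y$ is a derivation, and then collapsing via the antipode identity $S\ast\Id=\eta\epsilon$ --- is exactly the argument one finds in the cited source, so nothing is missing.
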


\begin{theorem}[{\cite{ebrahimi-fard07alt}}]\label{th:dynkinvers}
Right composition with the Dynkin operator induces a bijection between $G(H)$ and $\g(H)$: $$\_ \circ D: G(H) \rightarrow \g(H).$$ The inverse is given by $\Gamma: \g(H) \rightarrow G(H)$ as 
\begin{equation}\label{eq:dynkininverse}
\Gamma(\alpha) = \sum_{n} \sum_{\begin{array}{cc} k_1 + \cdots + k_l = n, \\ k_1,..., k_l > 0 \end{array}} \frac{\alpha_{k_1} * \cdots * \alpha_{k_l}}{k_1(k_1+k_2)\cdots (k_1 + \cdots + k_l)},\end{equation}
where $\alpha_k = \left.\alpha\right|_{H_k}$.
\end{theorem}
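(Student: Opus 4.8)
The plan is to convert the identity $\alpha=\phi\circ D$ into a purely recursive relation between the character $\phi$ and the infinitesimal character $\alpha$, recognise the formula (\ref{eq:dynkininverse}) as the unique solution of that recursion, and then verify separately that this solution is again a character. The first observation is that a character $\phi\in G(H)$, being an algebra morphism, commutes with convolution in the sense $\phi\circ(f\ast g)=(\phi\circ f)\ast(\phi\circ g)$ for $f,g\in\End_k(H)$; combining this with $\phi\circ S=\phi^{\ast-1}$ (Proposition~\ref{A-valchar}) and $(\phi\circ Y)|_{H_n}=n\phi_n$ gives $\phi\circ D=\phi\circ(S\ast Y)=\phi^{\ast-1}\ast(\phi\circ Y)$. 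Hence, setting $\alpha:=\phi\circ D$, the pair $(\phi,\alpha)$ is characterised by the single identity $\phi\ast\alpha=\phi\circ Y$, i.e.\ $(\phi\ast\alpha)_n=n\,\phi_n$ for all $n$. Moreover $\alpha=\phi\circ D$ lies in $\g(H)$: $D$ is an $H$-valued infinitesimal character by the preceding Lemma, and composing with $\phi$ using $\phi\circ\eta_H\circ\epsilon=\epsilon$ turns this into the defining Leibniz rule for a $k$-valued infinitesimal character. So $\_\circ D$ indeed maps $G(H)$ into $\g(H)$.

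Next I would read off injectivity and the candidate inverse. For $\alpha\in\g(H)$ one has $\alpha_0=0$, so the degree-$n$ component of $\phi\ast\alpha=\phi\circ Y$ becomes $\alpha_n+\sum_{i=1}^{n-1}\phi_i\ast\alpha_{n-i}=n\,\phi_n$; together with $\phi_0=\delta|_{H_0}$ (forced by $\phi(\one)=1$) this determines $\phi$ uniquely through $\phi_n=\frac1n\sum_{i=0}^{n-1}\phi_i\ast\alpha_{n-i}$. A straightforward induction unwinds this recursion into $\phi_n=\sum_{k_1+\cdots+k_l=n}\frac{\alpha_{k_1}\ast\cdots\ast\alpha_{k_l}}{k_1(k_1+k_2)\cdots(k_1+\cdots+k_l)}$, which is precisely $\Gamma(\alpha)_n$. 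In particular $\_\circ D$ is injective and $\Gamma$ is the only conceivable inverse.

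The substantive part — and the main obstacle — is to show that $\phi:=\Gamma(\alpha)$ (defined by the formula, equivalently by the recursion, so that $\phi\ast\alpha=\phi\circ Y$ holds by construction) is genuinely a character when $\alpha\in\g(H)$; this is where the derivation property of $\alpha$ must be used. I would prove $\phi(xy)=\phi(x)\phi(y)$ for homogeneous $x\in H_p$, $y\in H_q$ by induction on $n=p+q$ (the case $n=0$ being immediate from $\phi_0=\delta$). For the inductive step, write $\phi(xy)=\frac1n\sum_{i=0}^{n-1}(\phi_i\ast\alpha_{n-i})(xy)$, use that $\Delta$ is multiplicative to expand $\Delta(xy)=\sum x_{(1)}y_{(1)}\otimes x_{(2)}y_{(2)}$, apply the induction hypothesis to the factor $\phi_i$ (which only sees total degree $i<n$) and the infinitesimal-character identity to $\alpha_{n-i}$, and collapse the cross-terms with the counit axiom, noting that $\epsilon$ is the convolution unit of $\Hom_k(H,k)$. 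This produces $(\phi_i\ast\alpha_{n-i})(xy)=(\phi\ast\alpha_{n-i})(x)\,\phi(y)+\phi(x)\,(\phi\ast\alpha_{n-i})(y)$; summing over $i$ and using $\phi\ast\alpha=\phi\circ Y$ (so $\sum_i(\phi\ast\alpha_{n-i})(x)=(\phi\ast\alpha)(x)=p\,\phi(x)$, and likewise $q\,\phi(y)$ for $y$) gives $\sum_i(\phi_i\ast\alpha_{n-i})(xy)=(p+q)\phi(x)\phi(y)=n\,\phi(x)\phi(y)$, hence $\phi(xy)=\phi(x)\phi(y)$. The only care needed here is grading bookkeeping, so that the Leibniz identity for $\alpha$ is applied to the correct homogeneous Sweedler components.

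Finally I would close the loop: $\phi=\Gamma(\alpha)$ lies in $G(H)$ and satisfies $\phi\ast\alpha=\phi\circ Y$, and since $\phi$ is convolution-invertible (it is a character, or simply because $\phi_0=\delta$ on the connected graded $H$), the equivalence of the first paragraph gives $\phi\circ D=\phi^{\ast-1}\ast(\phi\circ Y)=\alpha$. Thus $\_\circ D\colon G(H)\to\g(H)$ is surjective; by the second paragraph it is injective, and $\Gamma$ is its two-sided inverse, given by (\ref{eq:dynkininverse}) (the outer sum over $n$ there merely assembling the graded components of $\Gamma(\alpha)$).
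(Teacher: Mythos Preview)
The paper does not prove this theorem; it merely cites it from \cite{ebrahimi-fard07alt} and uses the result downstream (e.g.\ in Proposition~\ref{prop:dynkinIdem}). So there is no in-paper proof to compare against.

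Your argument is correct and is essentially the proof given in the cited source. The key reduction $\phi\circ D=\phi^{\ast-1}\ast(\phi\circ Y)$, hence $\phi\ast\alpha=\phi\circ Y$, is exactly the pivot used there; the recursion $n\phi_n=\sum_{i=0}^{n-1}\phi_i\ast\alpha_{n-i}$ and its unwinding to~(\ref{eq:dynkininverse}) are standard. Your treatment of the substantive step --- showing $\Gamma(\alpha)\in G(H)$ by induction on total degree using the Leibniz property of $\alpha$ --- is also the intended mechanism. One small remark: your intermediate identity $(\phi_i\ast\alpha_{n-i})(xy)=(\phi\ast\alpha_{n-i})(x)\,\phi(y)+\phi(x)\,(\phi\ast\alpha_{n-i})(y)$ is correct, but only because the degree restriction imposed by $\alpha_{n-i}$ on the right automatically forces the correct grading on the $\phi$-factors (so that the $\phi$ on the right effectively sees only the degrees that $\phi_i$ on the left allows). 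You acknowledge this bookkeeping issue in passing; in a fully written-out version it would be worth making the degree matching explicit, since a reader might otherwise suspect a mismatch between $\phi_i$ on the left and unrestricted $\phi$ on the right.
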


\noindent Later we will apply the Dynkin operator and its inverse in the setting of a shuffle algebra $\Hsh(\OT)$, where $\OT$ is an alphabet of all ordered rooted trees, and the grading $|\tau|$ of $\tau\in \OT$ counts the nodes in the tree.

\section{Algebras of formal diffeomorphisms on manifolds}

The main goal of this section is to arrive at Lie--Butcher series and the underlying Hopf algebra $\Hn$. This Hopf algebra contains the Connes--Kreimer Hopf algebra as a subalgebra and is also closely related to $\Hsh$. To emphasize the natural connection between Lie--Butcher series, $\Hn$ and more classical Lie series, we start with a discussions of Lie series (autonomous and non-autonomous).

\subsection{Autonomous Lie series}\label{sec:autlie}
In this section we review the well-known theory of Lie series on manifolds and the corresponding Hopf algebraic structures of the free Lie algebra. The algebraic theory is detailed in~\cite{reutenauer93fla,cartier2006apo} and for the 
analytical theory we refer to~\cite{abraham88mta}.

Let $F$ be a vector field on a manifold $\M$ and $\Phi_{t,F}\colon \M\rightarrow\M$ its flow.  Let $\psi\colon \M\rightarrow E$ be a section of a vector bundle over $\M$, and let $\Phi_{t,F}^* \psi$ denote the pullback.
For the applications later in this paper we will only consider trivial bundles, in which case we write $\psi\colon \M\rightarrow \V$ for some vector space $\V$ and define pullback as composition $\Phi_{t,F}^*\psi = \psi\opr \Phi_{t,F}$. 
The \emph{Lie derivative} of $\psi$ is defined as
\begin{equation}\label{eq:liederiv}F[\psi]  = \left.\frac{\partial}{\partial t}\right|_{t=0}\Phi_{t,F}^*\psi .
\end{equation}
Composition of Lie derivatives defines an associative, non-commutative product of vector fields $F,G\mapsto F G$, where vector fields are first order differential operators. The product $F G$ is the second order differential operator $(F G)[\psi] = F[G[\psi]]$ etc. We let $\one$ denote the 0th order identity operator $\one[\psi] = \psi$. 
The linear span of all differential operators of all orders forms the universal enveloping algebra $U(\XM)$.

The basic pullback formula is (\cite{abraham88mta}):
\begin{equation}\label{eq:pullback}\frac{\partial}{\partial t} \Phi_{t,F}^*\psi =  \Phi_{t,F}^*(F[\psi]) .
\end{equation}
Iterating this we find ${\left.\partial^n/\partial t^n\right|}_{t=0}\Phi_{t,F}^*\psi = F[F[\cdots [\psi]]] := F^n[\psi]$, and hence follows the (Taylor)--Lie form of a \emph{pullback series}:
\[\Phi_{t,F}^*[\psi] = \sum_{j=0}^\infty \frac{t^j}{j!}F^j[\psi] := \mbox{Exp}(tF)[\psi] .\]

\noindent Fundamental questions are: Which series in $U(\XM)$ represent vector fields and which represent pullback series? How do we algebraically characterize compositions and the inverse of pullback series? How do we understand the $\Exp$ map taking vector fields to their pullback series, and what about the inverse $\Log$ operation? These questions are elegantly answered in terms of the shuffle Hopf algebra. 
We will detail these issues, and see that the same structures reappear in the discussion of B-series later.

An algebraic abstraction of Lie series starts with 
fixing a (finite or infinite) alphabet $\A$ and a map $\nu\colon \A\rightarrow \XM$ assigning each letter to a vector field.
As in Example~\ref{ex:shufalg1} we let $\RA$
denote all finite $\RR$-linear combinations of words built from $\A$ and $\Hsh$ the shuffle algebra. The map $\nu$ can be uniquely extended to a linear $\F_\nu\colon \RA\rightarrow U(\XM)$  as a concatenation homomorphism:
\begin{eqnarray*}
\F_\nu(\one) & = & \one,\\
\F_\nu(a) & = & \nu(a) \quad \mbox{ for all letters $a\in \A$},\\
\F_\nu(\omega_1\omega_2)& =& \F_\nu(\omega_1)\F_\nu(\omega_2) \quad \mbox{for all words $\omega_1,\omega_2\in \A^*$.}
\end{eqnarray*}
We extend $\F_\nu$ to a map $\Bs_t$ taking an infinite series $\alpha\in\RR\langle\langle\A\rangle\rangle$ to an infinite formal series 
$\Bs_{t}(\alpha)\in U(\XM)^*$, defined for $t\in\RR$ as follows: Consider the alphabet $\A$ with a grading 
$|a|\in \NN^+$ for all $a\in \A$. This extends to $\Hsh$ as $|\omega| = |a_1|+\ldots+|a_k|$ for all $\omega=a_1\ldots a_k\in \A^*$, $|\one| = 0$, thus $\Hsh$ becomes a graded connected Hopf algebra. Given the grading we define
\begin{equation}\label{eq:btseries}
\Bs_{t}(\alpha) = \sum_{\omega\in \A^*} t^{|\omega|} \alpha(\omega)\F_\nu(\omega). 
\end{equation}

\noindent Consider $\Hsh^*=\RR\langle\langle\A\rangle\rangle$ with the convolution $\alpha\ast \beta = \alpha\beta$ as in Example~\ref{ex:shufalg2}. By construction $\Bs_t$ is a convolution homomorphism, \[\Bs_t(\alpha\ast\beta) = \Bs_t(\alpha)\Bs_t(\beta).\]
For a real valued infinitesimal character $\alpha\in\g(\Hsh)$, and a fixed $t=h$,  $\Bs_h(\alpha)$ is a formal vector field on $\M$. For a real valued character $\beta\in G(\Hsh)$, $\Bs_h(\beta)$ represents a formal diffeomorphism $\Phi_h$ on $\M$ via the pullback series \[\Bs_h(\beta)[\psi] = \psi\opr \Phi_h\quad \mbox{for $\psi\colon \M\rightarrow \RR$.}\] Note, however, that pullbacks compose contravariantly with respect to composition of diffeomorphisms:
\[\Bs_h(\beta_1\ast\beta_2)[\psi] = \Bs_h(\beta_1)\Bs_h(\beta_2)[\psi] = \psi\opr \Phi_2\opr\Phi_1.\]

To summarize: Composition of diffeomorphisms is modelled by convolution in $G(\Hsh)$ (in opposite order), the inverse of a diffeomorphism is computed by right composing with the antipode, the convolutional exponential maps to the exponential of Lie series and the logarithm is computed by composing with the Eulerian idempotent.
\begin{eqnarray*}
\Bs_h(\beta\opr S)\Bs_h(\beta) &=& \one \quad \mbox{ for $\beta\in G(\Hsh)$}\\
\Bs_h(\exp^*(\alpha))& =& \mbox{Exp}(\Bs_h(\alpha))\quad \mbox{ for $\alpha\in \g(\Hsh)$}\\
\Bs_h(\beta) & = & \mbox{Exp}(\Bs_h(\beta\opr e))\quad \mbox{ for $\beta\in G(\Hsh)$}.
\end{eqnarray*}
In the next section  we discuss flows of non-autonomous equations, and we will see that right composition with the Dynkin idempotent represents algebraically the operation of finding a non-autonomous vector field corresponding to a diffeomorphism on a manifold. 

\begin{remark}In~\cite{munthe-kaas99cia} the Lie algebra of infinitesimal characters $\g(\Hsh)$ is studied as a graded free Lie algebra. An explicit formula for the dimension of the homogeneous components $\g_k = \left.\g(\Hsh)\right|_k$ is derived for general gradings. This is very useful for the study of the complexity of Lie group integrators.
\end{remark}


\subsection{Time-dependent Lie series}\label{sec:tlie}

The classical Fa\`a di Bruno Hopf algebra models the composition of formal diffeomorphisms on $\RR$ (\cite{figueroa2005fdb}, \cite{figueroa2005cha}, \cite{foissy2008fdb}). We will see that this has a natural generalization to compositions of time-dependent flows on manifolds. We introduce a \emph{Dynkin--Fa\`a di Bruno bialgebra} describing the composition of flows of time-dependent vector fields on a coarse level that considers only the grading of the terms in the $t$-expansion of the time-dependent vector fields.

\subsubsection{Non-commutative Bell polynomials and Dynkin--Fa\`a di Bruno bi-algebra}

Let $\I=\{d_j\}_{j=1}^\infty$ be an infinite alphabet in 1--1 correspondence with $\NN^+$, and consider the 
free associative algebra $\Hfdb = \RR\langle \I \rangle$ with the grading given by $|d_j| = j$ and $|d_{j_1}\cdots d_{j_k}|= j_1+\cdots+j_k$. Let $\partial\colon \Hfdb\rightarrow \Hfdb$ be the derivation given by $\partial(d_i) = d_{i+1}$, linearity and the Leibniz rule
$\partial(\omega_1\omega_2) = \partial(\omega_1)\omega_2 + \omega_1\partial(\omega_2)$ for all $\omega_1,\omega_2\in\I^*$. We let $\#(\omega)$ denote the length of the word $\omega$.

\begin{definition}\label{def:bell}The non-commutative Bell polynomials $B_n \equiv B_n(d_1,\ldots, d_n)\in \RR\langle \I \rangle$ are defined by the recursion
\begin{eqnarray*}
B_0 & = & \one\\
B_n& =& (d_1+\partial)B_{n-1} = (d_1+\partial)^n \one\quad \mbox{for $n>0$}.
\end{eqnarray*}
\end{definition}

The first of these are given as
 \begin{eqnarray*}
B_0 &=& \one\\
B_1 & = & d_1\\
B_2 & = & d_1^2 + d_2 \\
B_3 &=& d_1^3 + 2d_1d_2 + d_2d_1+d_3\\
B_4 & = & d_1^4+ 3d_1^2 d_2 +2d_1d_2d_1 + d_2d_1^2 +3d_1d_3+  d_3d_1 + 3d_2d_2 + d_4.
\end{eqnarray*}
The polynomials $B_n$  are introduced in~\cite{munthe-kaas95lbt,munthe-kaas98rkm} to explain the Butcher order theory of Runge--Kutta methods in a manifold context, and generalize to certain classes of numerical integrators on manifolds.

\begin{remark} Additional insight to the Bell polynomials are obtained by  considering the free associative algebra generated by two symbols $d_1$ and $\partial$, defining \[d_i := [\partial, d_{i-1}]= \partial d_{i-1} - d_{i-1}\partial\quad\mbox{ for $i>1$.}\]
We find by induction that  $(d_1+\partial)^n$ satisfies the binomial relation
\begin{equation}
\left(d_1+\partial\right)^n  =  \sum_{k=0}^n \binom{n}{k} B_{k}(d_1,\ldots,d_k)\partial^{n-k},
\end{equation}
which yields the formula
\begin{equation}
\label{eq:expsplit}
\exp\left(d_1+\partial\right) = \sum_{m=0}^\infty\frac{B_m(d_1,\ldots,d_m)}{m!} \exp\left(\partial\right),
\end{equation}
and also the recursion
\begin{equation}
B_{n+1}(d_1,\ldots,d_{n+1}) = \sum_{k=0}^n\binom{n}{k}B_k(d_1,\ldots,d_k)d_{n-k+1} \quad\mbox{for $n>0$.}
\end{equation}
\end{remark}

The  non-commutative  \emph{partial Bell polynomials} $B_{n,k}\equiv B_{n,k}(d_1,\ldots,d_{n-k+1})$ are defined as the part of $B_n$ consisting of the words $\omega$ of length $\#(\omega)=k>0$, 
e.g. $B_{4,3} = 3d_1^2 d_2 +2d_1d_2d_1 + d_2d_1^2$. Thus
\[B_n = \sum_{k=1}^n  B_{n,k} .\]
A bit of combinatorics yields an explicit formula:
  \begin{equation}
B_{n,k} = \mathop{\sum_{\omega\in\I^*}}_{|\omega| = n, \#(\omega)=k} \kappa({\omega})\binom{n}{\omega}
\omega ,
\end{equation}
where for $\omega = d_{j_1}d_{j_2}\cdots d_{j_k}$
\[ \binom{n}{\omega}\equiv\binom{n}{|d_{j_1}|,|d_{j_2}|,\ldots,|d_{j_k}|} :=    \frac{n!}{j_1!j_2!\cdots j_k !}\]
are the multinomial coefficients and the coefficients $\kappa(\omega)$ are defined as
  \begin{equation}\kappa(\omega)  \equiv \kappa(|d_{j_1}|,|d_{j_2}|,\ldots,|d_{j_k}|) := \frac{j_1j_2\cdots j_k}{j_1(j_1+j_2)\cdots(j_1+j_2+\cdots+j_k)}.\end{equation}
The coefficients $\kappa$ form a partition of unity on the symmetric group $S_k$, 
\[\sum_{\sigma\in S_k} \kappa(\sigma(\omega)) = 1,\]
where $\sigma(\omega)$ denotes a permutation of the letters in $\omega$. E.g.\ $\kappa(1,2)+\kappa(2,1)=\frac23+\frac13=1$.

It is often useful to employ polynomials  $Q_{n}$ and $Q_{n,k}$ related to  $B_{n}$ and $B_{n,k}$ by the following rescaling:
\begin{eqnarray}
Q_{n,k}(d_1,\ldots,d_{n-k+1}) &=& \frac{1}{n!}B_{n,k}(1!d_1,\ldots,j!d_j,\ldots) = \mathop{\sum}_{|\omega| = n, \#(\omega)=k} \kappa({\omega})
\omega\\
Q_n(d_1,\ldots,d_n)&=&\sum_{k=1}^n Q_{n,k}(d_1,\ldots,d_{n-k+1})\\
Q_0 &:=& \one.
\end{eqnarray}

\noindent Note that $B_n$ and $B_{n,k}$ become the classical Bell- and partial Bell polynomials when the product in $\RR\langle \I \rangle$ is commutative, i.e. in the free commutative algebra on $\I$. A non-commutative Fa\`a di Bruno Hopf algebra is studied in~\cite{brouder06nch}. However, their definition differs from the present by defining the polynomials $Q_{n,k}$ without the factor $\kappa$ that associates  different factors to different permutations of a word (adding up to 1 over all permuatations).

These Bell polynomials are closely related to the graded Dynkin operator on a connected graded Hopf algebra $H$.
For $\alpha\in  H^*$, define a graded algebra homomorphism $d_i\mapsto d_i(\alpha)\colon \Hfdb\rightarrow H^*$ as
\begin{equation}
d_i(\alpha) = \alpha_i = \left.\alpha\right|_{H_i}, \qquad d_id_j(\alpha) = \alpha_i\ast\alpha_j .
\end{equation}

\begin{proposition} \label{prop:dynkinIdem}The operator  defined as
\begin{equation}\label{eq:Q}
Q(\alpha) = \sum_{n=0}^\infty Q_n(\alpha),
\end{equation}
is a bijection from infinitesimal characters to characters $Q\colon \g(H)\rightarrow G(H)$ with inverse given by right composition with the Dynkin idempotent $Y^{-1}\opr D$,
\begin{equation}
Q^{-1}(\beta) = \beta\opr Y^{-1}\opr D,
\end{equation}
where $Y$ is the grading operator on $H$ and $D=S\ast Y$ is the graded Dynkin operator.
\end{proposition}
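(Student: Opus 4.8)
The plan is to relate the operator $Q$ to the inverse $\Gamma$ of the Dynkin bijection from Theorem~\ref{th:dynkinvers}, and thereby reduce everything to results already established. First I would unwind the definition of $Q(\alpha)$ using the explicit formula for $Q_{n,k}$. Applying the homomorphism $d_i \mapsto \alpha_i$ to
\[
Q_n(\alpha) = \sum_{k=1}^n \ \mathop{\sum}_{|\omega|=n,\ \#(\omega)=k} \kappa(\omega)\,\omega,
\]
and writing $\omega = d_{k_1}\cdots d_{k_l}$ with $k_1+\cdots+k_l = n$, turns each word into $\alpha_{k_1}\ast\cdots\ast\alpha_{k_l}$ with coefficient $\kappa(k_1,\ldots,k_l) = \dfrac{k_1k_2\cdots k_l}{k_1(k_1+k_2)\cdots(k_1+\cdots+k_l)}$. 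Summing $Q(\alpha) = \sum_n Q_n(\alpha)$ then gives exactly the series
\[
\sum_n \ \mathop{\sum}_{k_1+\cdots+k_l=n,\ k_i>0} \frac{\alpha_{k_1}\ast\cdots\ast\alpha_{k_l}}{k_1(k_1+k_2)\cdots(k_1+\cdots+k_l)} \cdot (k_1\cdots k_l),
\]
which matches the formula \eqref{eq:dynkininverse} for $\Gamma$ \emph{except} for the numerator factor $k_1\cdots k_l$. That extra factor is precisely what the rescaling by $Y^{-1}$ absorbs: since $\alpha\in\g(H)$, the component $\alpha_k$ is a degree-$k$ piece and the grading operator acts on the output in degree $n = k_1+\cdots+k_l$; a careful bookkeeping (using that $Y$ is a derivation for $\ast$ on infinitesimal characters, or simply comparing degree by degree) shows that $Q(\alpha) = \Gamma(Y\text{-adjusted }\alpha)$, or more cleanly that $Q^{-1} = Y^{-1}\opr D$ follows once the plain $\Gamma^{-1} = \_\opr D$ identity is in hand. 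The cleanest route is: show $Q(\alpha)\opr D = Y\opr\alpha$ — i.e. right composition with the Dynkin operator recovers $Y$ applied to the infinitesimal character — and then $Q(\alpha)\opr Y^{-1}\opr D = \alpha$, which is the claimed inverse. Conversely one checks $Q(\beta\opr Y^{-1}\opr D) = \beta$ for $\beta\in G(H)$.

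The three facts I would lean on are all available: the Bell-polynomial identities (in particular the recursion $B_n = (d_1+\partial)B_{n-1}$ and the $\kappa$-partition-of-unity property $\sum_{\sigma\in S_k}\kappa(\sigma(\omega)) = 1$), Theorem~\ref{th:dynkinvers} giving the bijection $\_\opr D\colon G(H)\to\g(H)$ with explicit inverse $\Gamma$, and the Lemma that $D = S\ast Y$ is an $H$-valued infinitesimal character. A self-contained alternative avoiding Theorem~\ref{th:dynkinvers} is a direct induction: define $\beta = Q(\alpha)$ and prove $\beta\opr D = Y\opr\alpha$ by showing the degree-$n$ components satisfy the recursion forced by $D = S\ast Y$ together with Proposition~\ref{gradedantipode}'s recursive antipode. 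This amounts to verifying that the $\kappa$-weighted sum over compositions of $n$ telescopes correctly against the antipode recursion — the partition-of-unity property of $\kappa$ is exactly the combinatorial input that makes the telescoping work.

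The main obstacle I anticipate is the bookkeeping around the grading operator $Y$ and the factor $k_1\cdots k_l$: one must be scrupulous about whether $Y$ is applied before or after convolution, since $Y$ is a derivation rather than an algebra map, so $Y(\alpha_{k_1}\ast\cdots\ast\alpha_{k_l}) = (k_1+\cdots+k_l)\,\alpha_{k_1}\ast\cdots\ast\alpha_{k_l}$ while the $\kappa$-denominators $k_1(k_1+k_2)\cdots(k_1+\cdots+k_l)$ and the $B_{n,k}\to Q_{n,k}$ rescaling by $n!$ versus the $j!$ rescalings must all be tracked to land exactly on $\Gamma$. Once the identification $Q = \Gamma\circ(\text{grading adjustment})$ is pinned down precisely at the level of the defining series, idempotency and bijectivity are immediate from Theorem~\ref{th:dynkinvers}, and the statement $Q^{-1}(\beta) = \beta\opr Y^{-1}\opr D$ drops out.
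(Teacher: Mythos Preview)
Your approach is essentially the paper's: the key identity is simply $Q(\alpha) = \Gamma(\alpha\circ Y)$, which follows because $(\alpha\circ Y)_{j} = j\,\alpha_j$ turns each $\Gamma$-summand $\dfrac{(\alpha\circ Y)_{j_1}\ast\cdots\ast(\alpha\circ Y)_{j_l}}{j_1(j_1+j_2)\cdots(j_1+\cdots+j_l)}$ into $\kappa(j_1,\ldots,j_l)\,\alpha_{j_1}\ast\cdots\ast\alpha_{j_l}$, after which Theorem~\ref{th:dynkinvers} gives the inverse at once. Your hesitation about the bookkeeping dissolves once you write it this way; note also that your ``$Q(\alpha)\circ D = Y\circ\alpha$'' should read $\alpha\circ Y$, since $Y\circ\alpha$ does not type-check for $\alpha\colon H\to k$.
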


\begin{proof}For $\alpha\in \g(H)$ we have
\begin{equation}\label{eq:QGamma}\Gamma(\alpha\opr Y) = \sum_{n=0}^\infty\sum_{j_1+\cdots+j_k=n} \frac{j_1j_2\cdots j_k}{j_1(j_1+j_2)\cdots(j_1+\cdots+j_k)}\alpha_{j_1}\ast \cdots\ast\alpha_{j_k}
= Q(\alpha) ,\end{equation}
thus the result follows from Theorem~\ref{th:dynkinvers}.
\end{proof}

The non-commutative Dynkin--Fa\`a di Bruno bialgebra $\Hfdb$ is obtained by taking the algebra structure of $\Hfdb$ and defining the coproduct $\cpfdb$ as
\begin{eqnarray}\label{eq:fdb1}
\cpfdb(\one)  & = & \one\tpr \one\\
\cpfdb(d_n) & = & \sum_{k=1}^n B_{n,k}\tpr d_k .\label{eq:fdb2}
\end{eqnarray}
This extends to all of $\Hfdb$ by the product rule $\cpfdb(d_i d_j) = \cpfdb(d_i)\cpfdb(d_j)$. Thus, e.g.\
\begin{eqnarray*}
\cpfdb(d_1) & = & d_1\tpr d_1\\ 
\cpfdb(d_2) & = & d_1^2\tpr d_2 + d_2\tpr d_1\\
\cpfdb(d_1d_2) & = & d_1^3\tpr d_1d_2 + d_1d_2\tpr d_1^2.
\end{eqnarray*}
Note that the coproduct is \emph{not} graded by $|\cdot|$, thus Proposition~\ref{gradedantipode} does not hold for $\Hfdb$. By a lengthy (but not enlightening) induction argument we can prove:

\begin{lemma}~\label{coprod of Bell} The coproduct of the partial Bell polynomials are given as
\begin{equation}\label{eq:fdb3}
\cpfdb(B_{n,k})  =  \sum_{\ell=1}^n B_{n,\ell}\tpr B_{\ell,k} .
\end{equation}
\end{lemma}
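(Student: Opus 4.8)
The plan is to prove \eqref{eq:fdb3} by induction on $n$, exploiting the fact that the whole structure is generated from the recursion $B_n = (d_1+\partial)B_{n-1}$ together with the product rule defining $\cpfdb$. The key is to first understand how $\cpfdb$ interacts with the operations $d_1\cdot(-)$ (left multiplication by $d_1$) and $\partial$, since these are the two ingredients in the recursion. Left multiplication by $d_1$ is easy: since $\cpfdb$ is multiplicative and $\cpfdb(d_1)=d_1\otimes d_1$, we get $\cpfdb(d_1\omega) = (d_1\otimes d_1)\cpfdb(\omega)$. The interaction with $\partial$ is the more delicate point, and my first step would be to establish a ``co-Leibniz'' compatibility of the form $\cpfdb\circ\partial = (\partial\otimes\partial)\circ\cpfdb$ on the image of the $B_{n,k}$'s, or more precisely a formula tracking how $\partial$ acts through the tensor factors; this should follow from $\partial$ being a derivation raising degree by one, combined with the explicit shape of the $B_{n,k}$.

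\textbf{Key steps.} First I would record the base case $n=1$: $B_{1,1}=d_1$ and $\cpfdb(d_1)=d_1\otimes d_1 = B_{1,1}\otimes B_{1,1}$, matching \eqref{eq:fdb3}. Next, using the recursion $B_{n+1,k}$ in terms of the $B_{n,\cdot}$ — concretely the partial-Bell analogue of $B_{n+1}=(d_1+\partial)B_n$, namely that $B_{n+1,k}$ picks up a $d_1\cdot B_{n,k-1}$ term from the $d_1$ part and a $\partial(B_{n,k})$ term from the derivation part — I would apply $\cpfdb$ to both sides. Using multiplicativity for the $d_1$ contribution and the $\partial$-compatibility for the derivation contribution, the inductive hypothesis $\cpfdb(B_{n,\ell}) = \sum_{m} B_{n,m}\otimes B_{m,\ell}$ turns the right-hand side into a double sum that, after reindexing and collecting, should reassemble into $\sum_{\ell} B_{n+1,\ell}\otimes B_{\ell,k}$. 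The reassembly uses once more the partial-Bell recursion, now applied in the \emph{left} tensor slot, to fold $d_1\cdot B_{n,\ell-1}+\partial(B_{n,\ell})$ back into $B_{n+1,\ell}$.

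\textbf{Main obstacle.} The hard part will be pinning down precisely how $\partial$ acts on the second tensor factor. A priori $\cpfdb$ is only defined via the generators $d_n$ and the product rule, and $\partial$ is defined on $\Hfdb$ as a derivation with $\partial(d_i)=d_{i+1}$; these do not obviously commute with $\cpfdb$ because $\cpfdb$ is not a graded coproduct. The cleanest route is probably to prove the auxiliary identity $\cpfdb(\partial\omega) = (\partial\otimes\mathrm{id} + \mathrm{id}\otimes\partial)\cpfdb(\omega)$ fails in general, but that the \emph{correct} statement is $\cpfdb(d_{n+1}) = \sum_k B_{n+1,k}\otimes d_k$ follows from $\cpfdb(d_n)=\sum_k B_{n,k}\otimes d_k$ by a separate small induction matching \eqref{eq:fdb2} directly. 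In other words, I would reduce Lemma~\ref{coprod of Bell} to a statement purely about the $B_{n,k}$ satisfying their own recursion in both slots, sidestepping $\partial$ on tensors altogether. Once the right bookkeeping identity for the $B_{n,k}$ recursion is in hand, the induction is the ``lengthy but not enlightening'' computation the authors allude to; I would organize it as: (i) the partial-Bell recursion $B_{n+1,k} = d_1 B_{n,k-1} + \partial(B_{n,k})$ with appropriate boundary conventions, (ii) its image under $\cpfdb$ via multiplicativity, (iii) substitution of the inductive hypothesis, and (iv) a reindexing lemma showing $\sum_\ell \big(d_1 B_{n,\ell-1} + \partial(B_{n,\ell})\big)\otimes B_{\ell,k} = \sum_\ell B_{n+1,\ell}\otimes B_{\ell,k}$ plus a matching contribution in the left slot — the only genuinely fiddly point being that $\partial$ in the left factor must be checked to produce exactly $\partial(B_{n,\ell})$ and not cross-terms, which again is handled by reducing to the generator-level formula \eqref{eq:fdb2}.
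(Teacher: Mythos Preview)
Your overall plan---induction on $n$ using the recursion for the $B_{n,k}$---is exactly what the paper has in mind (the authors give no details beyond calling it ``a lengthy but not enlightening induction argument''). So at the level of strategy you are aligned with the paper.

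There is, however, a genuine gap in your handling of the $\partial$ term. Your outline eventually rests on being able to write
\[
\cpfdb\big(\partial B_{n,k}\big)=\sum_{\ell}\partial(B_{n,\ell})\otimes B_{\ell,k}\quad\text{(plus some ``cross-terms'' you hope vanish).}
\]
This fails already in small cases: for $n=k=2$ one has $\partial B_{2,2}=d_1d_2+d_2d_1$, and direct computation gives
\[
\cpfdb(\partial B_{2,2})=(d_1d_2+d_2d_1)\otimes d_1^2+d_1^3\otimes(d_1d_2+d_2d_1),
\]
whereas $\sum_\ell\partial(B_{2,\ell})\otimes B_{\ell,2}$ produces only the first term. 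More structurally, the operator $\Phi=\partial\otimes\id+(d_1\cdot)\otimes\partial$ does satisfy $\cpfdb(\partial d_n)=\Phi(\cpfdb(d_n))$ on generators, but $\Phi$ is \emph{not} a derivation on $\Hfdb\otimes\Hfdb$ (the obstruction is $[d_1,a]c\otimes b\,\partial d$), so the identity $\cpfdb\circ\partial=\Phi\circ\cpfdb$ does not propagate from generators to arbitrary words; one checks for instance that it already fails on $d_2d_1$. Your proposed fix, ``reducing to the generator-level formula~\eqref{eq:fdb2}'', is circular: \eqref{eq:fdb2} is the \emph{definition} of $\cpfdb$ on $d_n$, and the whole difficulty is that this does not extend to $\partial B_{n,k}$ in the way you need.

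The clean way out is to abandon the $(d_1+\partial)$-recursion and instead use the purely multiplicative recursion recorded in the Remark after Definition~\ref{def:bell},
\[
B_{n+1,k}=\sum_{j}\binom{n}{j}B_{j,k-1}\,d_{n-j+1},
\]
which involves no $\partial$ at all. Applying $\cpfdb$ is then straightforward (multiplicativity plus the inductive hypothesis on $B_{j,k-1}$ and the defining formula~\eqref{eq:fdb2} for $d_{n-j+1}$), and the remaining work is a genuine but mechanical reorganization of the resulting triple sum into $\sum_\ell B_{n+1,\ell}\otimes B_{\ell,k}$, using the same recursion once more in each tensor slot. That is the ``lengthy but not enlightening'' part.
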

\noindent Note that  $B_{n,1}=d_n$, thus (\ref{eq:fdb2}) is a special case of (\ref{eq:fdb3}). Summing the partial $B_{n,k}$ over $k$, we find the coproduct of the full Bell polynomials:
\[\cpfdb(B_{n})  =  \sum_{k=1}^n B_{n,k}\tpr B_{k}.\]
\noindent Using Lemma \ref{coprod of Bell} and the fact that $B_{n,k} = 0$ for $k > n$, one can easily show that $\Hfdb$ is a bialgebra.

\begin{proposition} $\Hfdb = \RR\langle \I \rangle$ with the non-commutative concatenation product and the coproduct $\cpfdb$ form a bialgebra $\Hfdb$ which is neither commutative nor cocommutative. 
\end{proposition}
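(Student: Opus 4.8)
The plan is to verify the bialgebra axioms for $\Hfdb = \RR\langle \I \rangle$ equipped with the concatenation product and the coproduct $\cpfdb$, and then to exhibit explicit elements witnessing the failure of commutativity and cocommutativity. The statement that $\Hfdb$ is a bialgebra has, according to the text, already been reduced to Lemma~\ref{coprod of Bell} together with the vanishing $B_{n,k}=0$ for $k>n$; so the bulk of the proof of this proposition is the easy part, and the interesting content is the ``neither commutative nor cocommutative'' clause.

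First I would record why $\cpfdb$ is coassociative and counital. Since $\cpfdb$ is defined to be an algebra homomorphism (via $\cpfdb(d_id_j) = \cpfdb(d_i)\cpfdb(d_j)$) from the free associative algebra, it suffices to check coassociativity and the counit property on the generators $d_n$. For coassociativity, applying $(\cpfdb \tpr \Id)$ and $(\Id \tpr \cpfdb)$ to $\cpfdb(d_n) = \sum_{k=1}^n B_{n,k}\tpr d_k$ and using Lemma~\ref{coprod of Bell} to expand $\cpfdb(B_{n,k}) = \sum_{\ell=1}^n B_{n,\ell}\tpr B_{\ell,k}$ on the left factor, one gets $\sum_{\ell,k} B_{n,\ell}\tpr B_{\ell,k}\tpr d_k$ from both sides (on the right side one expands $\cpfdb(d_k) = \sum_{\ell} B_{k,\ell}\tpr d_\ell$ and relabels, noting $B_{\ell,k}=0$ unless $k\le\ell\le n$). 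The counit is forced: define $\epsilon(\one)=1$, $\epsilon(d_j)=0$ for all $j$, extend multiplicatively; then $(\epsilon\tpr\Id)\cpfdb(d_n) = \sum_k \epsilon(B_{n,k}) d_k = d_n$ because $\epsilon(B_{n,k})$ picks out only the (nonexistent for $n\ge1$) constant term except when $n=k$ gives $B_{n,n}=d_1^n$ with $\epsilon=0$ too — so one must instead check directly that the only surviving term under $\epsilon\tpr\Id$ is the one with $B_{n,k}$ having a nonzero scalar part, which forces re-examining: actually the correct normalization is that $\cpfdb(d_n)$ contains the term $d_n\tpr d_1$ (from $B_{n,1}=d_n$ paired with... no) — here I would simply be careful and use $B_{n,n}=d_1^n$, $B_{n,1}=d_n$, and check $(\Id\tpr\epsilon)\cpfdb(d_n)=\sum_k B_{n,k}\epsilon(d_k)=0$ unless some $d_k$ maps to the scalar, which never happens, so in fact the counit must be defined so that the pairing works; the text's display $\cpfdb(d_1)=d_1\tpr d_1$ shows $d_1$ is group-like, so $\epsilon(d_1)=1$. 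The honest statement is that $\epsilon$ is the algebra map with $\epsilon(d_1)=1$ and $\epsilon(d_j)=0$ for $j\ge 2$ is not quite right either; I would determine $\epsilon$ from the requirement $(\epsilon\tpr\Id)\circ\cpfdb = \Id$, which on $d_n$ reads $\sum_k \epsilon(B_{n,k})d_k = d_n$, forcing $\epsilon(B_{n,k}) = \delta_{n,k}$, and since $B_{n,n}=d_1^n$ this gives $\epsilon(d_1)=1$ and then inductively $\epsilon(d_j)=-(\text{lower terms})$; this is a routine finite computation and I will not grind it out.

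Then the bialgebra compatibility — that $\cpfdb$ and $\epsilon$ are algebra morphisms — holds by construction, since $\cpfdb$ was defined as a homomorphism and $\epsilon$ likewise. So $\Hfdb$ is a bialgebra. For the final clause: non-commutativity of the product is immediate because $\RR\langle\I\rangle$ is a free associative algebra on more than one generator, e.g.\ $d_1 d_2 \neq d_2 d_1$. For non-cocommutativity, the cleanest witness is already in the text: $\cpfdb(d_2) = d_1^2\tpr d_2 + d_2\tpr d_1$, and applying the flip $\tau$ gives $d_2\tpr d_1^2 + d_1\tpr d_2 \neq \cpfdb(d_2)$ since $d_1^2\tpr d_2$ and $d_1\tpr d_2$ are linearly independent basis tensors. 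I would conclude with exactly these two one-line observations.

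The main obstacle is not really mathematical depth but bookkeeping: pinning down the correct counit $\epsilon$ and verifying the counit axiom, since $\cpfdb$ is not graded by $|\cdot|$ (as the text warns, Proposition~\ref{gradedantipode} does not apply), so one cannot appeal to the connected-graded machinery and must instead read off $\epsilon$ from the fixed-point condition $(\epsilon\tpr\Id)\cpfdb=\Id=(\Id\tpr\epsilon)\cpfdb$. Once $\epsilon$ is correctly identified, coassociativity via Lemma~\ref{coprod of Bell} and the bialgebra compatibility are formal, and the non-(co)commutativity is a triviality exhibited by the displayed examples.
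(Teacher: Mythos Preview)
Your approach matches the paper's: the paper offers no detailed proof beyond the sentence ``Using Lemma~\ref{coprod of Bell} and the fact that $B_{n,k}=0$ for $k>n$, one can easily show that $\Hfdb$ is a bialgebra,'' and you correctly flesh this out by checking coassociativity on the generators via that lemma, noting that the algebra-map definition of $\cpfdb$ gives compatibility for free, and exhibiting $d_1d_2\neq d_2d_1$ and $\cpfdb(d_2)\neq\tau\cpfdb(d_2)$ for the final clause.

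The one place you stumble is the counit. The algebra map with $\epsilon(d_1)=1$ and $\epsilon(d_j)=0$ for $j\ge 2$ \emph{is} the correct counit; you write it down and then wrongly dismiss it. No recursion of the form ``$\epsilon(d_j)=-(\text{lower terms})$'' is needed. The verification is two lines using only $B_{n,1}=d_n$ and $B_{n,n}=d_1^n$:
\begin{align*}
(\Id\tpr\epsilon)\cpfdb(d_n) &= \sum_{k=1}^n B_{n,k}\,\epsilon(d_k) = B_{n,1}\cdot 1 = d_n,\\
(\epsilon\tpr\Id)\cpfdb(d_n) &= \sum_{k=1}^n \epsilon(B_{n,k})\, d_k = \epsilon(d_1^n)\, d_n = d_n,
\end{align*}
where in the second line $\epsilon(B_{n,k})=0$ for $k<n$ because every word in $B_{n,k}$ with $k<n$ contains at least one letter $d_j$ with $j\ge 2$. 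Equivalently, the condition $\epsilon(B_{n,k})=\delta_{n,k}$ that you derived already gives $\epsilon(d_n)=\epsilon(B_{n,1})=\delta_{n,1}$ directly, with nothing left to compute. Once you fix this, the proof is complete and in line with the paper.
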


\subsubsection{Pullback along time-dependent flows}
Let $F_t = \sum_{j=0}^\infty F_{j+1} \frac{t^{j}}{j!}$ be a time-dependent vector field on $\M$ where $F_{j}= \left.F_t^{(j-1)}\right|_{t=0}$. Let $\Phi_{t,F_t}$ be the solution operator of the corresponding non-autonomous equation, such that
\[y(t) = \Phi_{t,F_t}y_0 \quad\mbox{solves}\quad y'(t) = F_t(y(t)),\quad y(0)=y_0.\]
Note that $\Phi_{t,F_t}$ is \emph{not} a 1-parameter subgroup of diffeomorphisms in $t$.

\begin{lemma}\cite{munthe-kaas95lbt} The $n$-th time derivative of the pullback of a (time-independent) function $\psi$ along the time-dependent flow $\Phi_{t,F_t}$ is given as
\begin{equation}\label{eq:Bnderiv1} \frac{\partial^n}{\partial t^n}\Phi_{t,F_t}^*\psi = B_n(F_t)[\psi] ,\end{equation}
where $B_n(F_t)$ is the image of $B_n$ under the homomorphism from $\Hfdb$ to $U(\XM)$ given by $d_i\mapsto F_t^{(i-1)}$. In particular
\begin{equation}\left.\label{eq:Bnderiv} \frac{\partial^n}{\partial t^n}\right|_{t=0}\Phi_{t,F_t}^*\psi = B_n(F_1,\ldots,F_n)[\psi] .\end{equation}
\end{lemma}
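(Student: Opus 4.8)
The plan is to prove \eqref{eq:Bnderiv1} by induction on $n$, and then specialize to $t=0$ to obtain \eqref{eq:Bnderiv}. The base case $n=0$ is immediate since $B_0 = \one$ is the identity operator and $\Phi_{0,F_0}^*\psi = \psi$ (here $\Phi_{t,F_t}^*\psi$ is the pullback, i.e.\ $\psi \opr \Phi_{t,F_t}$). For the inductive step, I would differentiate \eqref{eq:Bnderiv1} once more in $t$ and show that the result equals $B_{n+1}(F_t)[\psi]$. The key analytic input is a time-dependent version of the pullback formula \eqref{eq:pullback}: for a non-autonomous vector field one has $\frac{\partial}{\partial t}\Phi_{t,F_t}^*\psi = \Phi_{t,F_t}^*(F_t[\psi])$, which follows by the same argument as the autonomous case (differentiating $\psi(\Phi_{t,F_t}(p))$ via the chain rule and using $\frac{\partial}{\partial t}\Phi_{t,F_t}(p) = F_t(\Phi_{t,F_t}(p))$).

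The heart of the induction is the following bookkeeping. Write $B_n(F_t)$ for the image of $B_n$ under $d_i \mapsto F_t^{(i-1)}$. Differentiating this expression in $t$ produces two contributions: one from applying $\frac{\partial}{\partial t}$ through the pullback operator (which, by the time-dependent pullback formula above, amounts to left-multiplication by $F_t = F_t^{(0)}$, i.e.\ by the image of $d_1$), and one from differentiating each factor $F_t^{(i-1)}$ inside $B_n(F_t)$ with respect to $t$, which sends $F_t^{(i-1)} \mapsto F_t^{(i)}$, i.e.\ acts as the image of the derivation $\partial$ with $\partial(d_i) = d_{i+1}$. Hence
\[
\frac{\partial}{\partial t}\bigl(B_n(F_t)[\psi]\bigr) = \bigl((d_1 + \partial)B_n\bigr)(F_t)[\psi] = B_{n+1}(F_t)[\psi],
\]
where the last equality is precisely the defining recursion $B_{n+1} = (d_1+\partial)B_n$ from Definition~\ref{def:bell}. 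Combined with the inductive hypothesis $\frac{\partial^n}{\partial t^n}\Phi_{t,F_t}^*\psi = B_n(F_t)[\psi]$, this gives $\frac{\partial^{n+1}}{\partial t^{n+1}}\Phi_{t,F_t}^*\psi = B_{n+1}(F_t)[\psi]$, closing the induction. Setting $t=0$ and using $F_j = \left.F_t^{(j-1)}\right|_{t=0}$ yields \eqref{eq:Bnderiv}.

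The main obstacle is making precise the claim that $\frac{\partial}{\partial t}$ acting on $B_n(F_t)[\psi]$ splits exactly into the "left-multiply by $d_1$" piece and the "apply the derivation $\partial$" piece. The subtlety is that $B_n(F_t)$ is a sum of compositions of differential operators $F_t^{(i_1-1)} F_t^{(i_2-1)} \cdots$, and each such composition is itself a higher-order differential operator depending on $t$ through each of its factors; one must verify that the Leibniz rule for $\frac{\partial}{\partial t}$ distributing over these operator products matches exactly the Leibniz rule $\partial(\omega_1\omega_2) = \partial(\omega_1)\omega_2 + \omega_1\partial(\omega_2)$ defining the derivation on $\Hfdb$, while the extra term from differentiating the outer pullback $\Phi_{t,F_t}^*$ supplies exactly the $d_1\,\cdot$ term. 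This is essentially a careful application of the chain rule and the time-dependent pullback formula, but it is where the bulk of the (routine) verification lies; everything else is formal manipulation with the recursion of Definition~\ref{def:bell}. I would therefore state the time-dependent pullback formula as a preliminary observation and then carry out the induction as above.
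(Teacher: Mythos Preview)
Your approach is correct and is essentially the unpacked version of the paper's argument. The paper proceeds more compactly by autonomizing: it passes from the non-autonomous field $F_t$ on $\M$ to the autonomous field $F_t + \partial/\partial t$ on $\M\times\RR$, so that the standard autonomous pullback formula~\eqref{eq:pullback} can be iterated verbatim to give
\[
\frac{\partial^n}{\partial t^n}\Phi_{t,F_t}^*\psi = \Phi_{t,F_t}^*\bigl((F_t+\partial/\partial t)^n[\psi]\bigr),
\]
and then observes that under the homomorphism $d_1\mapsto F_t$, $\partial\mapsto\partial/\partial t$ (hence $d_i\mapsto F_t^{(i-1)}$) the very definition $B_n=(d_1+\partial)^n\one$ identifies the right-hand side with $B_n(F_t)[\psi]$, since $\psi$ is time-independent. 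Your induction on $n$ via the recursion $B_{n+1}=(d_1+\partial)B_n$ is exactly this same computation carried out one step at a time; the ``two contributions'' you describe (left-multiplication by $F_t$ from differentiating through the pullback, and the Leibniz-rule derivation on the $F_t^{(i-1)}$ factors) are precisely the two summands of the single operator $F_t+\partial/\partial t$. The autonomization viewpoint buys you a cleaner bookkeeping device---the ``obstacle'' you flag about matching the two Leibniz rules disappears, because both contributions are packaged into one autonomous vector field on the extended space---but the mathematical content is identical.
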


\begin{proof} The non-autonomous vector field $F_t$ on $\M$ corresponds to the autonomous field $F_t+\partial/\partial t$ on $\M\xpr \RR$, thus (\ref{eq:pullback}) yields
\[\frac{\partial}{\partial t} \Phi_{t,F_t}^*\psi =  \Phi_{t,F_t}^*\left((F_t+\partial/\partial t)[\psi]\right) 
\Rightarrow \frac{\partial^n}{\partial t^n} \Phi_{t,F_t}^*\psi =  \Phi_{t,F_t}^*\left((F_t+\partial/\partial t)^n[\psi]\right)
.\]
Consider the homomorphism induced from $d_1\mapsto F_t$ and $\partial \mapsto \partial/\partial t$, thus $d_i\mapsto F_t^{(i-1)}$. 
Equation~(\ref{eq:Bnderiv1}) follows directly from Definition~\ref{def:bell}. At $t=0$ we have
$d_i\mapsto F_i$, thus (\ref{eq:Bnderiv}).
\end{proof}

\begin{remark}
Note that~(\ref{eq:expsplit}) yields a space-time split formula for pullback which is valid also for pullback of a time-dependent function $\psi_t$. The pullback for $t\in[0,h]$ developed at $t=0$ becomes
\[\Phi_{h,F_t}^*\psi_t = \left.\exp\left(h(F_t+\frac{\partial}{\partial t})\right)\right|_{t=0}\!\!\!\!\!\!\!\!\![\psi_t] = \left.\sum_{n=0}^\infty\frac{h^n}{n!}B_n(F_t)\exp(h\frac{\partial}{\partial t})\right|_{t=0}\!\!\!\!\!\!\!\!\![\psi_t] 
= \sum_{n=0}^\infty\frac{h^n}{n!}B_n(F_1,\ldots,F_n)[\psi_h] . \]
\end{remark}

The Dynkin idempotent relates pullback series with their corresponding time-dependent vector fields. Let $\A$ be an arbitrary alphabet with a grading $|\cdot|\colon \A\rightarrow \NN^+$, let $\Hsh = \Hsh(\A)$ be the corresponding graded shuffle algebra and let $\Bs_t(\alpha)$ be as in (\ref{eq:btseries}).

\begin{proposition} Let $\alpha\in \g(\Hsh)$ and $\beta=Q(\alpha) \in G(\Hsh)$ be related by the graded Dynkin idempotent as
in Proposition~\ref{prop:dynkinIdem}. Define the time-dependent vector field
\[F_t =\frac{\partial}{\partial t} \Bs_t(\alpha).\]
Then pullback of a time-independent $\psi$ along the time-dependent flow $\Phi_{t,F_t}$ is given as
\begin{equation}\label{eq:bspullback}\Phi_{t,F_t}^*\psi = \Bs_t(\beta)[\psi]. \end{equation}
\end{proposition}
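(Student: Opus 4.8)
The plan is to reduce \eqref{eq:bspullback} to the Bell-polynomial pullback formula \eqref{eq:Bnderiv1}--\eqref{eq:Bnderiv} together with the explicit description of $Q$ via the coefficients $\kappa$. First I would write $F_t = \tfrac{\partial}{\partial t}\Bs_t(\alpha)$ explicitly in terms of the grading: since $\Bs_t(\alpha) = \sum_{\omega} t^{|\omega|}\alpha(\omega)\F_\nu(\omega)$ and $\alpha$ is an infinitesimal character, $\Bs_t(\alpha) = \sum_{j\geq 1} t^j \bigl(\sum_{|\omega|=j}\alpha(\omega)\F_\nu(\omega)\bigr)$, so that $F_t = \sum_{j\geq 1} j\,t^{j-1}\Bs_t^{(j)}$ where $\Bs_t^{(j)}$ collects the degree-$j$ part; equivalently $F_t = \sum_{i\geq 1} F_{i} \tfrac{t^{i-1}}{(i-1)!}$ with $F_i = i!\,\Bs^{(i)}_{?}$ — I would identify $F_i = \Bs_h(\alpha_i)$-type expressions carefully, i.e. $F_i$ is the image under $\F_\nu$ of $i\cdot(\text{degree-}i\text{ part of }\alpha)$, matching the convention $F_i = F_t^{(i-1)}|_{t=0}$.

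Next I would apply the preceding Lemma (equation \eqref{eq:Bnderiv}): $\tfrac{\partial^n}{\partial t^n}|_{t=0}\Phi_{t,F_t}^*\psi = B_n(F_1,\ldots,F_n)[\psi]$, hence by Taylor expansion $\Phi_{t,F_t}^*\psi = \sum_{n\geq 0}\tfrac{t^n}{n!}B_n(F_1,\ldots,F_n)[\psi]$. The task is then to show this equals $\Bs_t(\beta)[\psi] = \sum_{\omega}t^{|\omega|}\beta(\omega)\F_\nu(\omega)[\psi]$. I would expand $\tfrac{1}{n!}B_n(F_1,\ldots,F_n)$ using the explicit formula for $B_{n,k}$ with the multinomial coefficients $\binom{n}{\omega}$ and the coefficients $\kappa(\omega)$: writing $F_i = \F_\nu(\text{stuff})$, the $\tfrac{1}{n!}\binom{n}{j_1,\ldots,j_k}$ combines with the $\tfrac{1}{(i-1)!}$-normalizations in $F_i$ to leave exactly the rescaling that turns $B_{n,k}$ into $Q_{n,k}$ — this is precisely the content of the rescaling $Q_{n,k}(d_1,\ldots) = \tfrac{1}{n!}B_{n,k}(1!d_1,2!d_2,\ldots)$. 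Thus $\sum_n \tfrac{t^n}{n!}B_n(F_1,\ldots,F_n) = \sum_n t^n Q_n(\alpha)[\,\cdot\,]$ under the homomorphism $d_i\mapsto \alpha_i$, i.e. $= \Bs_t\bigl(\sum_n Q_n(\alpha)\bigr)[\psi] = \Bs_t(Q(\alpha))[\psi] = \Bs_t(\beta)[\psi]$, using Proposition~\ref{prop:dynkinIdem} for the identification $\beta = Q(\alpha)$ and the fact that $\Bs_t$ is a convolution homomorphism so it intertwines the convolution products $\alpha_{j_1}\ast\cdots\ast\alpha_{j_k}$ appearing in $Q$ with concatenation in $U(\XM)$.

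The main obstacle I expect is bookkeeping the two different normalization conventions consistently: the time-dependent vector field is expanded with $\tfrac{t^{j-1}}{(j-1)!}$ coefficients and $F_j = F_t^{(j-1)}|_{t=0}$, whereas $\Bs_t(\alpha)$ is expanded with bare $t^{|\omega|}$ and $\alpha$ valued on words; reconciling these so that the $\tfrac{1}{n!}$ from Taylor's theorem, the multinomial $\binom{n}{\omega}$ from $B_{n,k}$, and the factorials hidden in the $F_i$ collapse exactly onto the $\kappa(\omega)$ weights defining $Q_{n,k}$ requires care. Once that identity of scalar coefficients is pinned down, the rest is immediate from the homomorphism property of $\Bs_t$ and Proposition~\ref{prop:dynkinIdem}. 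A clean way to organize it is to verify the equality degree by degree: fix $n$, compare the coefficient of $t^n$ on both sides, and check it equals $\sum_{k=1}^n\sum_{|\omega|=n,\#\omega=k}\kappa(\omega)\,\alpha_{j_1}\!\ast\cdots\ast\alpha_{j_k}$ evaluated through $\F_\nu$, which is exactly $\Bs_t(\beta)$'s degree-$n$ part by \eqref{eq:QGamma}.
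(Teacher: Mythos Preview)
Your proposal is correct and follows exactly the paper's own argument: identify $F_j = \F_\nu(j!\,\alpha_j)$, apply the Lemma \eqref{eq:Bnderiv} to get the Taylor expansion $\Phi_{t,F_t}^*\psi = \sum_n \tfrac{t^n}{n!}B_n(F_1,\ldots,F_n)[\psi]$, use the rescaling $Q_{n,k}(d_1,\ldots)=\tfrac{1}{n!}B_{n,k}(1!d_1,\ldots)$ to rewrite $\tfrac{1}{n!}B_n(F_1,\ldots,F_n)=\F_\nu(Q_n(\alpha_1,\ldots,\alpha_n))$, and conclude via \eqref{eq:QGamma}. The only slip is in your identification of $F_i$: it should be $\F_\nu(i!\cdot\alpha_i)$, not $\F_\nu(i\cdot\alpha_i)$ (your earlier ``$F_i=i!\,\Bs^{(i)}$'' was right), which is precisely the bookkeeping you correctly flagged as requiring care.
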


\begin{proof}  We have $F_t = \sum_{j=0}^\infty F_{j+1} \frac{t^{j}}{j!}$ where $F_j = \F_\nu(j!\alpha_j)$.
Developing the Taylor series of $\Phi_{t,F_t}^* \psi$ at $t=0$ we get from~(\ref{eq:Bnderiv})
\[\Phi_{t,F_t}^* \psi =\sum_{n=0}^\infty\frac{t^n}{n!}B_n(F_1,\ldots,F_n)[\psi] .\]
Thus
\[\frac{1}{n!}B_n(F_1,\ldots,F_n) = \F(\frac{1}{n!}B_n(1!\alpha_1,\ldots,n!\alpha_n) ) = \F(Q_n(\alpha_1,\ldots,\alpha_n)).\]
Using~(\ref{eq:QGamma}) we obtain the result.
\end{proof}

\subsection{Lie--Butcher theory}\label{sec:LB}
Pullback formulas such as (\ref{eq:bspullback}) relate the time derivatives of $F_t$ with the spatial derivatives of a function $\psi$. We have captured the algebraic structure of the temporal derivations through the Dynkin idempotent $Y^{-1}\opr D\colon G(\Hsh)\rightarrow \g(\Hsh)$ and its inverse $\Gamma\opr Y \colon\g(\Hsh)\rightarrow\G(\Hsh)$. However, the spatial Lie derivation $\Bs_t(\beta)[\psi]$ cannot be algebraically characterized within this structure.  In order to do this,  we need to refine the Hopf algebra $\Hsh$. On the manifold $M$, we obtain a refined version of $U(\XM)$ by expanding differential operators in terms of a non-commuting frame on $\XM$.  If the manifold is $\RR^n$ and the frame is the standard commutative coordinate frame, the construction yields the classical Butcher formulation and the Connes--Kreimer Hopf algebra \cite{brouder2000rkm}. More generally we obtain a Hopf algebra $\Hn$,  built on forests of planar trees, which contains the Connes--Kreimer algebra as a subalgebra. In $\Hn$ we can represent Lie derivation in terms of tree graftings.

\subsubsection{Differential operators in $U(\XM)$  expanded in a non-commuting frame}\label{sec:noncomm}
Let $\XM$ denote the Lie algebra of all vector fields on $\M$ and let $\g\subset\XM$ be a transitive Lie subalgebra, in the sense that $\g$ everywhere spans $T\M$. This means that $\g$ defines a frame on the tangent bundle. We do not assume that the frame forms a basis. In general $\dim(\g)\geq\dim(\M)$, and in case of strict inequality we have a non-trivial isotropy subgroup at any point.

Let $U(\g)$ denote the universal enveloping algebra of $\g$. We let $\g^\M$ and $U(\g)^\M$ denote maps from $\M$ to $\g$ and from $\M$ to $U(\g)$. Since $\g$ is assumed to be transitive, we can represent any vector field $F\in \XM$ with a function $f\in \g^\M$ as in Section~\ref{sec:eeuler}. Similarly, any higher order differential operator in $U(\XM)$ can be represented as a function in $U(\g)^\M$. We have the natural inclusion $\g\subset \gM$ and $U(\g)\subset\UgM$ as constant maps, called frozen vector fields and higher order differential operators. We identify $U(\g)^\M$ with sections of the trivial vector bundle $\M\tpr U(\g)\rightarrow \M$, and for a diffeomorphism $\Phi\colon\M\rightarrow \M$ we define pullback of $f\in U(\g)^\M$ as $\Phi^*f = f\opr \Phi\in U(\g)^\M$. Pullback in this bundle defines a parallel transport which gives rise to a flat connection with torsion. For  $f,g\in \UgM$ we define the connection $f[g]\in\UgM$ pointwise from the Lie derivative as
\begin{equation*}
 \label{eq:sectionderiv}
 f[g](p) = \left(f(p)[g]\right)(p), \qquad p\in\M .
\end{equation*}
Similarly, the concatenation in $U(\g)$ is extended pointwise to a concatenation product $fg\in \UgM$ as
\begin{equation*}
 (fg)(p) = f(p)g(p), \qquad p\in\M .
\end{equation*}
This is called the \emph{frozen composition} of $f$ and $g$. We can also compose $f$ and $g$ as non-frozen differential operators $f\bpr g\in \UgM$:
\begin{equation*}
 (f\bpr g)[h] = f[g[h]], \qquad \mbox{for all $h\in \UgM$ .}
\end{equation*}
This is identical to the composition in $U(\XM)$, which in Section~\ref{sec:autlie} was written as $F,G\mapsto FG$ for 
$F,G\in \XM$.

It might be illustrative to write out the operations explicitly in terms of a basis $\{\partial_k\}_{k=1}^n$ of (non-commuting) vector fields spanning $\g$. Writing $f,g\in \g^\M$ in terms of the frame as $f = \sum_k f_k \partial_k$ and $g = \sum_\ell g_\ell\partial_\ell$ for $f_k,g_\ell \in \RR^\M$, we have
\begin{eqnarray*}
fg & = & \sum_{k,\ell}f_kg_\ell \partial_k\partial_\ell\\
f[g] & = & \sum_{k,\ell}f_k\partial_k[g_\ell] \partial_\ell\\
f\bpr g & = & \sum_{k,\ell}f_k\partial_k[g_\ell] \partial_\ell+\sum_{k,\ell}f_kg_\ell \partial_k\partial_\ell.
\end{eqnarray*} 
\noindent The connection $f[g]$, the frozen composition $fg$ and nonfrozen composition $f\bpr g$ are related as:

\begin{lemma}
 \label{lem:dalgebra}
 Let $f\in\gM$ and $g,h\in \UgM$. Then we have
 \begin{align*}
   \one[g] & = g\\
   f[gh] & = f[g]h + g(f[h]), \quad \mbox{(Leibniz)}\\
   (f\bpr g)[h] := f[g[h]] & = (fg)[h] + (f[g])[h],
 \end{align*}
 where $\one\in \UgM$ is the constant identity map.
\end{lemma}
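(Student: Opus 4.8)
The plan is to verify the three identities pointwise on $\M$, reducing everything to the corresponding statements about differential operators in $U(\XM)$ that were already established in Section~\ref{sec:autlie}. Since all three operations ($f[g]$, $fg$, $f\bpr g$) are defined pointwise from operations in $U(\g)$ and Lie derivatives, it suffices to fix $p\in\M$ and check each equation after evaluating at $p$.

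First I would dispose of the trivial identity $\one[g] = g$: by definition $\one$ is the constant map sending every $p\in\M$ to the $0$th order identity operator $\one\in U(\g)$, which acts as $\one[\psi]=\psi$; hence $\one[g](p) = (\one[g])(p) = g(p)$, so $\one[g]=g$. Next, for the Leibniz rule $f[gh] = f[g]h + g\,(f[h])$, I would evaluate at $p$ and use that $f(p)\in\g\subset\XM$ is a first-order differential operator, so it satisfies the ordinary Leibniz rule with respect to the concatenation (frozen) product: for the $U(\g)$-valued "functions" $g,h$, the derivation $f(p)[\,\cdot\,]$ distributes over the product $g(p)h(p)$. Writing it out in a frame $\{\partial_k\}$ as in the explicit formulas just above, with $f=\sum_k f_k\partial_k$, $g=\sum_\ell g_\ell\partial_\ell$, $h = \sum_m h_m\partial_m$, the equation reduces to the scalar Leibniz rule $f_k\partial_k[g_\ell h_m] = f_k\partial_k[g_\ell]\,h_m + g_\ell\,f_k\partial_k[h_m]$, which holds because $\partial_k$ is a first-order operator acting on the scalar functions $g_\ell,h_m\in\RR^\M$. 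For the last identity, $(f\bpr g)[h] = (fg)[h] + (f[g])[h]$, I would expand $(f\bpr g)[h] = f[g[h]]$ using the frame formula for $f\bpr g$ given above, namely $f\bpr g = \sum_{k,\ell}f_k\partial_k[g_\ell]\,\partial_\ell + \sum_{k,\ell}f_k g_\ell\,\partial_k\partial_\ell$; applying this to $h$ and comparing termwise with $(fg)[h] = \sum_{k,\ell}f_k g_\ell\,\partial_k\partial_\ell[h]$ and $(f[g])[h] = \sum_{k,\ell}f_k\partial_k[g_\ell]\,\partial_\ell[h]$ gives the result directly. Alternatively, and more conceptually, one observes that $f\bpr g$ is by definition the composition in $U(\XM)$, and the splitting into $fg$ (second-order part, frozen) plus $f[g]$ (first-order correction coming from differentiating the coefficients of $g$) is exactly the content of the connection-with-torsion picture described in the text.

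The main subtlety — not really an obstacle, but the one point requiring care — is keeping straight the three distinct products and the fact that $f$ is required to lie in $\gM$ (not merely $\UgM$) in the last two identities, since the derivation property $f[\,\cdot\,]$ behaves like a first-order operator precisely when $f$ is a vector-field-valued function; for higher-order $f$ the Leibniz rule would pick up extra terms. I would therefore state explicitly at the start of the proof that we fix $p$, write $f(p)=\sum_k f_k(p)\partial_k$ with $\partial_k$ spanning $\g$, and that each claimed identity is then an identity of elements of $U(\g)$ obtained by applying the already-known rules for $U(\XM)$ pointwise; the frame computations displayed immediately before the lemma essentially constitute the proof, and I would simply point to them after noting that evaluation at $p$ commutes with all the operations involved.
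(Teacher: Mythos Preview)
Your approach is correct and in fact more explicit than the paper's: the paper does not prove this lemma at all but simply refers the reader to~\cite{munthe-kaas08oth}. Your pointwise argument, reducing everything to the scalar Leibniz rule for first-order operators $\partial_k$ acting on coefficient functions, is exactly the right idea and is essentially what the cited reference does as well.

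One small notational slip worth fixing before you write it up: in the second and third identities $g,h$ lie in $U(\g)^\M$, not in $\g^\M$, so you cannot in general write $g=\sum_\ell g_\ell\partial_\ell$ with a single frame vector. You should instead expand in a PBW-type basis, $g=\sum_I g_I\,\partial_I$ with $\partial_I=\partial_{i_1}\cdots\partial_{i_k}$ and $g_I\in\RR^\M$, and similarly for $h$. The computation you outline goes through unchanged with this adjustment: the key point, as you correctly isolate, is that $f\in\g^\M$ means $f(p)$ is a \emph{first-order} operator, so $\partial_k$ acts as a derivation on the scalar products $g_I h_J$ (for Leibniz) and on $g_I\cdot\partial_I[h_J]$ (for the third identity). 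The displayed frame formulas just before the lemma are indeed the $g,h\in\g^\M$ special case of your argument, not the full statement, so you should not simply point to them but rather redo the two-line computation with multi-indices.
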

\noindent The proof is given in~\cite{munthe-kaas08oth}. Note the difference between $fg$ and $f\bpr g$. In the concatenation the value of $g$ is frozen to ${g}(p)$ before the differentiation with $f$ is done, whereas in the latter case the spatial variation of $g$ is seen by the differentiation using $f$. Interestingly, the work of Cayley from 1857~\cite{cayley1857taf} starts with the same result for vector fields expanded in the commuting frame $\partial/\partial x_i$. 

From this lemma we may compute the torsion and curvature of the connection. Let $f,g\in \g^\M$. We henceforth let $[f,g]_\bpr := f\bpr g - g\bpr f$ denote the Jacobi bracket and $[f,g]=fg-gf$ the \emph{frozen bracket}. The frozen bracket is computed pointwise from the bracket in $\g$  as $[f,g](p)= [f(p),g(p)]_\g$.  Writing the connection as $\nabla_fg:=f[g]$, we find
\begin{eqnarray*}
T(f,g) & = & \nabla_f g -\nabla_g f  -[f,g]_\bpr = gf - fg = -[f,g]\\
R(f,g)h & = & \nabla_f \nabla_g h-\nabla_g \nabla_f h -\nabla_{[f,g]_\bpr} h = 0.
\end{eqnarray*}
Note that if $\g$ is commutative, then $[f,g]=0$ and the connection is both flat and torsion free. In this case $f[g]$ is a pre-Lie product generating the Jacobi bracket: $f[g]-g[f]=[f,g]_\bpr$, but in general $f[g]-g[f]=[f,g]_\bpr-[f,g]$.

The product $f\bpr g$ is associative, and thus $U(\g)^\M$ with the binary operations $f,g\mapsto f[g]$ and $f,g\mapsto f\bpr g$ forms a unital dipterous algebra~\cite{loday08cha}, however, it has more structure than this. Following~\cite{munthe-kaas08oth} we define:

\begin{definition} Let $\A = \one\oplus \overline{\A}$ be a unital associative algebra with product $f,g\mapsto fg$, and also equipped with a non-associative composition $f,g\mapsto f[g]\colon \A\xpr \A\rightarrow\A$. Let $D(\A)$ denote all $f\in\A$ such that $f[\cdot]$ is a derivation: \[D(\A) = \stset{f\in \A}{f[gh] = (f[g])h + g(f[h])}.\]
We assume that $D(\A)$ generates $\overline{\A}$. We call $\A$ a D-algebra if for any derivation $f\in \D(\A)$ and any $g,h\in \A$ we have
\begin{align}
g[f] & \in D(\A)\\
\one[g] &= g\\
f[g[h]] &= (fg)[h] + (f[g])[h].
\end{align}
\end{definition}

\begin{definition}A D-algebra homomorphism is a map $\F\colon \A\rightarrow \A'$ between D-algebras such that $\F(D(\A))\subset D(\A')$ and for all $g,h\in \A$ we have
\begin{align}\F(\one) &= \one\\
\F(gh) &= \F(g)\F(h)\\
\F(g([h]) &= \F(g)[\F(h)].
\end{align}
\end{definition}

\subsubsection{The free D-algebra and elementary differentials}
The following definitions are detailed in~\cite{munthe-kaas08oth}.  Let $\OT$ denote the alphabet of all ordered (planar) rooted trees:
\[\OT = \{\ab,\aabb,\aababb,\aaabbb, \aabababb, \aabaabbb, \aaabbabb, \aaababbb,\ldots \}.\]
More generally, we consider decorated ordered rooted trees, where $\C$ is a (finite or infinite) set of colors. Decorated  trees are trees with a color from $\C$ assigned to  each node. As above, we let $\OT^*$ denote words of trees (forests), let $\one$ be the empty word and let $\omega_1,\omega_2\mapsto \omega_1\omega_2$ denote concatenation for $\omega_1,\omega_2\in \OT^*$.
Identifying $\C\subset\OT$ with 1-node trees, we can recursively build all words in $\OT^*$ from $\C$ by concatenation and adding roots. For $c\in\C$ and $\omega\in \OT^*$, define $B^+_{c}(\omega) \in \OT$ as the tree with branches $\omega$ and root $c$. Often we will be interested in the case where $\C = \{\ab\}$, just one color.

As above, let $\RR\langle\OT\rangle$ denote real polynomials (finite $\RR$-linear combinations of words) and $\RR\langle\langle\OT\rangle\rangle$ the dual space of infinite series, such as \[\alpha = \alpha(\one)\one + \alpha(\ab)\ab +\alpha(\aabb)\aabb+ \alpha(\ab\ab)\ab\ab + \alpha(\aaabbb)\aaabbb+\alpha(\aababb)\aababb+\alpha(\ab\aabb)\ab\aabb+ \alpha(\aabb\ab)\aabb\ab +\alpha(\ab\ab\ab)\ab\ab\ab+\cdots .\]
\noindent On $\RR\langle\OT\rangle$ we define \emph{left grafting} $(\cdot)[\cdot] \colon\RR\langle\OT\rangle\xpr \RR\langle\OT\rangle\rightarrow\RR\langle\OT\rangle$ by extending the following definition for trees by linearity. For all $c\in \C$, all  $\tau\in \OT$ and all $\omega,\omega'\in \OT^*$ we define:
\begin{align*}
   \omega[c] &=B^+_{c}(\omega)\\
   \one[\omega] & = \omega\\
   \tau[\omega\omega'] & = \tau[\omega]\omega' + \omega(\tau[\omega'])\\
   \tau[\omega[\omega']] & = (\tau\omega)[\omega'] + (\tau[\omega])[\omega']. 
 \end{align*}
 Compare this with Lemma~\ref{lem:dalgebra}. The left grafting $\tau[\omega]$ is obtained by attaching $\tau$ in all possible
 ways from the left to the vertices of $\omega$, and $(\tau\tau')[\omega]$ is obtained by attaching from the left first $\tau'$ and then $\tau$ on all nodes of $\omega$:  \begin{eqnarray*}
 \aABb\left[\ab\aabABb\aABb\right]&=&\aaABbb\aabABb\aABb+
 \ab\aaABbabABb\aABb+\ab\aaaABbbABb\aABb+\ab\aabAaABbBb\aABb+
 \ab\aabABb\aaABbABb+\ab\aabABb\aAaABbBb\\
 \ab\AB\left[\ab\aABb\right]&=&\aabABb\aABb+
 \aABb\aabABb+\aABb\aAabBb+\aabb\aABABb+\ab\aabABABb+
 \ab\aABAabBb+\aabb\aAABBb+\ab\aabAABBb+\ab\aAabABBb .
\end{eqnarray*}

\noindent We henceforth let $|\omega|$ denote the grading counting the total number of nodes, i.e.\ $|c|=1$ for all $c\in \C$, $|\omega\omega'| = |\omega|+|\omega'|$ and $|\omega[\omega']| = |\omega|+|\omega'|$.

\begin{proposition}Let  $\OT$ be planar trees decorated with colors $\C$. Consider $\N = \RR\langle\OT\rangle$ with concatenation $\omega,\omega'\mapsto \omega\omega'$, left grafting $\omega,\omega'\mapsto \omega[\omega']$ and unit $\one$ as defined above. 
$\N$ is a free D-algebra over $\C$, such that for any $D$-algebra $\A$ and 
any map $\nu\colon\C\rightarrow D(\A)$ there exists a unique D-algebra homomorphism map $\F_\nu\colon \N\rightarrow \A$ such that $\F_\nu(c)  = \nu(c)$ for all $c\in \C$.
\begin{diagram}[labelstyle=\scriptstyle]
\C &\rInto& \N \\
\dTo^{\nu} && \dTo_{\exists\,! \,\, \F_{\nu}} \\
D(\A) &\rInto& \A
\end{diagram}

\end{proposition}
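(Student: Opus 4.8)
The plan is to establish the universal property in two stages: first construct the homomorphism $\F_\nu$ by recursion on the structure of $\N$, then verify it is the unique D-algebra homomorphism extending $\nu$. The key observation is that $\N = \RR\langle\OT\rangle$ is generated as an algebra (under concatenation) by the single-node trees together with all trees obtained by iterated left grafting, and that the D-algebra axioms — especially the relation $\tau[\omega[\omega']] = (\tau\omega)[\omega'] + (\tau[\omega])[\omega']$ and the Leibniz rule $\tau[\omega\omega'] = \tau[\omega]\omega' + \omega(\tau[\omega'])$ — are precisely what is needed to reduce any expression in $\N$ to a linear combination of words, and conversely to reconstruct the value of $\F_\nu$ on any word from its values on $\C$.

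First I would define $\F_\nu$ on words by induction on the number of nodes $|\omega|$. Set $\F_\nu(\one) = \one$ and $\F_\nu(c) = \nu(c)$ for $c \in \C$. For a nonempty word $\omega = \tau_1\tau_2\cdots\tau_k$ with $k \geq 2$, we are forced by multiplicativity to set $\F_\nu(\omega) = \F_\nu(\tau_1)\F_\nu(\tau_2)\cdots\F_\nu(\tau_k)$, which is well-defined since each $\tau_i$ has strictly fewer nodes. For a single tree $\tau = B^+_c(\omega')$ with $\omega' \neq \one$, note that $\tau = \omega'[c]$ by the defining relation $\omega[c] = B^+_c(\omega)$, and $\omega'$ has $|\tau|-1$ nodes and $c$ has one node; so the D-algebra axiom forces $\F_\nu(\tau) = \F_\nu(\omega')[\F_\nu(c)] = \F_\nu(\omega')[\nu(c)]$. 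Since $\nu(c) \in D(\A)$ and $g[f] \in D(\A)$ for $f$ a derivation, one checks by a parallel induction that $\F_\nu$ maps every tree — indeed everything coming from $D(\N)$ — into $D(\A)$; this is needed for the D-algebra axioms on $\A$ to be applicable. Extend $\F_\nu$ to all of $\N$ by $\RR$-linearity.

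Next I would verify that this $\F_\nu$ is a D-algebra homomorphism, i.e. that it respects concatenation, left grafting, and the unit. Respecting the unit and concatenation is immediate from the construction. For left grafting, the claim is $\F_\nu(\omega[\omega']) = \F_\nu(\omega)[\F_\nu(\omega')]$ for all $\omega, \omega' \in \OT^*$; I would prove this by induction, peeling $\omega$ and $\omega'$ apart using the four defining relations for left grafting in $\N$ on the left-hand side and the matching D-algebra axioms on $\A$ on the right-hand side, the base cases being $\omega = \one$ (use $\one[g]=g$), $\omega' = \one$, and $\omega' = c \in \C$ (use $\omega[c] = B^+_c(\omega)$ and the definition of $\F_\nu$ on $B^+_c(\omega)$). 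The inductive step when $\omega'$ is a longer word uses the Leibniz relation on both sides; the step when $\omega'$ is a single composite tree $B^+_c(\omega'') = \omega''[c]$ uses the associativity-type axiom $\tau[\omega[\omega']] = (\tau\omega)[\omega'] + (\tau[\omega])[\omega']$. Uniqueness then follows because any D-algebra homomorphism $\G\colon \N \to \A$ with $\G(c) = \nu(c)$ must agree with $\F_\nu$ on $\C$, hence on all trees (by induction on node count, using $B^+_c(\omega) = \omega[c]$ and that $\G$ respects grafting), hence on all forests (multiplicativity), hence everywhere (linearity).

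The main obstacle I anticipate is the left-grafting induction in the second stage: the recursion on the $\N$-side and the D-algebra axioms on the $\A$-side must be threaded together carefully, with attention to which argument is being decomposed, and one must keep track throughout of the side condition that the relevant elements lie in $D(\N)$ and $D(\A)$ so that the axioms (which are only asserted for derivations in the first slot) actually apply. The bookkeeping is routine but genuinely requires a double or nested induction; everything else — well-definedness, multiplicativity, uniqueness — is straightforward once the grafting compatibility is in hand. (An alternative, cleaner route would be to cite the construction in~\cite{munthe-kaas08oth} directly, since the proposition is explicitly attributed there, but the above is the self-contained argument.)
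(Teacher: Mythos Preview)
The paper does not give a proof of this proposition at all: it is stated without argument, with the details deferred to~\cite{munthe-kaas08oth} (as announced at the start of the subsection). You anticipated this yourself in your closing parenthetical. Your self-contained argument is the natural one and is correct in outline: define $\F_\nu$ on trees and forests by recursion on node count using $B^+_c(\omega') = \omega'[c]$, extend by multiplicativity and linearity, then verify the grafting compatibility by induction matching the four defining relations in $\N$ against the D-algebra axioms in $\A$. The side condition you flag --- that the relevant left arguments must land in $D(\A)$ for the axioms to apply --- is indeed the only point requiring care, and your observation that trees map to $D(\A)$ via the axiom $g[f]\in D(\A)$ for $f\in D(\A)$ handles it. One small addition worth making explicit: when the left argument of grafting is a forest with more than one tree, the recursion proceeds via the rearranged fourth rule $(\tau\omega)[\omega'] = \tau[\omega[\omega']] - (\tau[\omega])[\omega']$, and you should check that the corresponding identity holds in $\A$ as a consequence of the third D-algebra axiom applied with $f = \F_\nu(\tau)\in D(\A)$.
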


\begin{definition}
We define the \emph{ordered Grossman--Larson}\footnote{The GL product is usually defined in a similar way over non-planar trees.} product on $\N$  for all $\omega,\omega'\in \OT^*$ as \[\omega\bpr \omega' = B^-(\omega[B^+(\omega')]) .\] 
I.e.\ we add a root to $\omega'$, graft on $\omega$ and finally remove the root again.
\end{definition}

\begin{proposition}
The GL-product is associative and, for all $n,n',n''\in \N$, satisfies
\begin{align}
n[n'[n'']] &= (n\bpr n')[n'']\\
\F_\nu(n\bpr n') &= \F_\nu(n)\bpr \F_\nu(n') .
\end{align}
\end{proposition}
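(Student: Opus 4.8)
The plan is to establish the two identities in stages, working first on the free D-algebra $\N$ and then transporting the statement along $\F_\nu$. For the associativity of the Grossman--Larson product and the identity $n[n'[n'']] = (n\bpr n')[n'']$, the natural strategy is to exploit the defining relations of a D-algebra rather than to argue combinatorially about graftings. The key observation is that $B^+$ and $B^-$ are essentially adjoint operations with respect to left grafting: by the rule $\omega[c] = B^+_c(\omega)$ one expects $\omega[B^+(\omega')]$ to encode ``graft $\omega$ onto $\omega'$ with a fresh root attached'', and $B^-$ then strips that root. So first I would record the elementary lemma that $B^-\bigl(\omega[B^+(\omega')]\bigr)$ behaves like a composition, and more precisely that $\omega[\,\tau[\omega']\,]$ for a tree $\tau$ relates to $(\omega\tau)[\omega']$ and $(\omega[\tau])[\omega']$ via the fourth D-algebra relation $\tau[\omega[\omega']] = (\tau\omega)[\omega'] + (\tau[\omega])[\omega']$ applied in the free D-algebra $\N$.

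Concretely, to prove $n[n'[n'']] = (n\bpr n')[n'']$, I would use the relation $f[g[h]] = (fg)[h] + (f[g])[h]$ together with the definition $n\bpr n' = B^-(n[B^+(n')])$. The idea is: $(n\bpr n')[n'']$ unfolds, via the D-algebra axioms applied to the root introduced by $B^+$ and removed by $B^-$, into exactly the terms produced by $n[n'[n'']]$. Then associativity of $\bpr$ follows formally: both $(n_1\bpr n_2)\bpr n_3$ and $n_1\bpr(n_2\bpr n_3)$, when fed into $[\,\cdot\,[n'']]$ for arbitrary $n''$, yield $n_1[n_2[n_3[n'']]]$ by two applications of the identity just proved; since $\N$ is free and the grafting action is faithful (distinct forests act differently on, say, a single-node tree — this is where freeness from the preceding proposition is used), associativity on elements follows from associativity of the induced operators. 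Alternatively one can cite that in a free D-algebra $D(\N)$ generates $\N$ and the first relation determines $\bpr$ uniquely, giving associativity for free.

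For the homomorphism property $\F_\nu(n\bpr n') = \F_\nu(n)\bpr\F_\nu(n')$, I would argue that $\F_\nu$ is a D-algebra homomorphism by the previous proposition, hence commutes with concatenation and left grafting; it therefore also commutes with $B^+$ and $B^-$ in the appropriate sense (on the target D-algebra $\A$ one defines $B^+,B^-$ analogously, or one simply uses $n[n'[n'']] = (n\bpr n')[n'']$ as the \emph{definition} of $\bpr$ transported across $\F_\nu$). Applying $\F_\nu$ to $n[n'[n'']] = (n\bpr n')[n'']$ and using that $\F_\nu$ respects $[\,\cdot\,]$ gives $\F_\nu(n)[\F_\nu(n')[\F_\nu(n'')]] = \F_\nu((n\bpr n')[n'']) = \F_\nu(n\bpr n')[\F_\nu(n'')]$; comparing with $\F_\nu(n)[\F_\nu(n')[\F_\nu(n'')]] = (\F_\nu(n)\bpr\F_\nu(n'))[\F_\nu(n'')]$ and again invoking faithfulness of the grafting action on $\A$ (valid at least on the free D-algebra, which suffices since we only need the identity as an identity of elements obtained by applying $\F_\nu$) yields the claim. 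The main obstacle I anticipate is the bookkeeping in the first step: verifying that unwinding $(n\bpr n')[n'']$ via the D-algebra axioms reproduces precisely $n[n'[n'']]$ requires careful handling of the auxiliary root created by $B^+$, in particular checking that the ``grafting onto the root'' terms cancel correctly when $B^-$ is applied. Keeping the argument clean will likely mean proving it first for $n'$ a single tree and then extending by the Leibniz-type rule $\tau[\omega\omega'] = \tau[\omega]\omega' + \omega(\tau[\omega'])$ and linearity.
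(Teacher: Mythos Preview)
The paper does not actually supply a proof of this proposition; it is stated and then immediately followed by a remark, with the details implicitly deferred to the cited reference~\cite{munthe-kaas08oth}. So there is no ``paper's own proof'' to compare against, and your proposal has to be judged on its own merits.

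Your overall architecture is sound. Proving the grafting identity $n[n'[n'']]=(n\bpr n')[n'']$ first, deducing associativity from it via faithfulness of $n\mapsto n[\,\cdot\,]$ on $\N$, and then handling $\F_\nu$ last is the right order. Faithfulness in $\N$ is immediate from $\omega[c]=B^+_c(\omega)$ and injectivity of $B^+_c$, so the passage from the operator identity $L_{n\bpr n'}=L_nL_{n'}$ to associativity of $\bpr$ is clean. Your identification of the main obstacle (the bookkeeping around the auxiliary root in $B^+$/$B^-$) is accurate; note that for a single node one gets $(n\bpr n')[c]=B^+_c\bigl(B^-(n[B^+_c(n')])\bigr)=n[B^+_c(n')]=n[n'[c]]$ essentially for free, so the real work is the extension to general $n''$, which indeed goes by the Leibniz rule and the fourth D-algebra axiom as you suggest.

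There is, however, a genuine gap in your argument for $\F_\nu(n\bpr n')=\F_\nu(n)\bpr\F_\nu(n')$. You appeal to faithfulness of the grafting action ``on $\A$'', then try to retreat to faithfulness on the free D-algebra in the parenthetical. That retreat does not work: the identity you want lives in $\A$, and equality of $\F_\nu(n\bpr n')[\,\cdot\,]$ and $(\F_\nu(n)\bpr\F_\nu(n'))[\,\cdot\,]$ as operators on the \emph{image} of $\F_\nu$ does not force equality of elements in $\A$ unless the action there is faithful. For $\A=U(\g)^\M$ this happens to hold (differential operators are determined by their action), but it is not part of the abstract D-algebra axioms. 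A cleaner route that avoids this entirely: once you have the grafting identity in $\N$, derive the recursion
\[
\one\bpr n'=n',\qquad \tau\bpr n'=\tau n'+\tau[n']\ (\tau\in\OT),\qquad (\tau m)\bpr n'=\tau\bpr(m\bpr n')-(\tau[m])\bpr n',
\]
which follows from the D-algebra axioms and faithfulness in $\N$. The same recursion characterizes $\bpr$ in any D-algebra (in $U(\g)^\M$ it is Lemma~\ref{lem:dalgebra} together with associativity of composition). Since $\F_\nu$ preserves concatenation, grafting, and sends trees to derivations, it preserves each line of the recursion, and the homomorphism property follows by induction without any faithfulness hypothesis on the target.
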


\begin{remark}
The classical setting of Cayley, Merson and Butcher is the case where $\M=\RR^n$ and $\g=\{\partial/\partial x_i\}\subset\XM$ is the standard commutative coordinate frame. The construction of Section~\ref{sec:noncomm} produces $U(\g)^\M$ as a D-algebra where the concatenation is commutative.  The connection is now flat and torsionless, and $f[g]$ becomes a pre-Lie product. The images of the trees $\F(\tau)$, for $\tau\in\OT$, are called the \emph{elementary differentials} in Butcher's theory (see \cite{butcher63cft}). These are explicitly given in~(\ref{eg:elmdiffB}). The images of the forests $\F(\omega)$, for $\omega\in\OT^*$, are called \emph{elementary differential operators} in Merson's theory (see \cite{merson57aom}). 
\end{remark}

\subsubsection{A generalized Connes--Kreimer Hopf algebra of planar trees}
We recall from~\cite{munthe-kaas08oth} the definition of the Hopf algebra $\Hn$. On the vector space $\RR\langle\OT\rangle$ we define the shuffle product $\sqcup$, and we define the coproduct $\Delta_N$ as the dual of the ordered GL product, such that
\begin{equation}\label{eq:HnConv}
(\alpha\bpr \beta)(\omega) = \sum_{(\omega)_{\Delta_N}}\alpha(\omega_{(1)})\beta(\omega_{(2)})\quad \mbox{for all $\alpha,\beta\in 
\RR\langle\langle\OT\rangle\rangle\}$.}\end{equation}
The motivation for this construction is the representation of $U(\XM)$ in terms of a frame $\g\subset\XM$ as $U(\g)^\M$. The shuffle product is the correct product to characterize which series in $\RR\langle\langle\OT\rangle\rangle$ represent vector fields on $\M$ and which represent diffeomorphisms. The composition in $U(\XM)$ appears as the product $\bpr$ on $U(\g)^\M$, thus with the coproduct $\Delta_N$ the convolution on  $\RR\langle\langle\OT\rangle\rangle$ represents composition in $U(\XM)$. 

It remains to give a precise characterization of $\Delta_N$ and the antipode in $\Hn$. As in the Connes--Kreimer case, both $\Delta_N$ and the antipode can be defined directly in terms of admissible cuts or in a recursive fashion. Recursively $\Delta_N$ is given as
 \begin{equation}
   \label{eq:delta-r}
   \begin{split}
     \Delta_N(\one)
     & = \one\tpr\one, \\
     \Delta_N(\omega\tau)
     & = \omega\tau\tpr\one+\Delta_N(\omega)
     \sqcup\cdot(I\tpr B^+_{c})\Delta_N(\omega_1),
   \end{split}
 \end{equation}
 where $\tau = B^+_{c}(\omega_1)\in\OT$, where  $\omega,\omega_1\in\OT^*$ and where  $\sqcup\cdot$ denotes  shuffle on the left and concatenation on the right:  $(\omega_1\tpr\tau_1)\sqcup\cdot(\omega_2\tpr\tau_2) = (\omega_1\sqcup\omega_2)\tpr(\tau_1\tau_2)$. The direct formula is
\begin{equation}
   \label{eq:delta-nr}
   \Delta_N(\omega)= 
   \sum_{\ell\in\mathrm{FALC}(\omega)} P^\ell(\omega)\tpr R^\ell(\omega),
 \end{equation}
where FALC denotes \emph{Full Admissible Left Cuts}, $P^\ell(\omega)$ is the shuffle of all the cut off parts, and $R^\ell(\omega)$ is the remaining part containing the root (see \cite{munthe-kaas08oth}). Calculations of the coproduct for forests up to order 4 can be found in Table \ref{tab:coproduct}.

\begin{theorem} 
 \label{thm:hopf} 
 Let $\Hn$ be the vector space $\N = \mathbb{R}\langle \OT \rangle$ with the operations
 \begin{alignat*}{2}
   \mbox{product} & \colon \mu_N(a\tpr b) = a\sqcup b,
   \\
   \mbox{coproduct} & \colon \Delta_N,
   \\
   \mbox{unit}       & \colon  u_N(1) = \one,\\
   \mbox{counit} & \colon e_N(\omega) = \left\{\begin{array}{cl}
       1, & \mbox{if $\omega=\one$},\\
       0, & \mbox{else}.
     \end{array}\right. 
 \end{alignat*}
 Then $\Hn$ is a Hopf algebra with an antipode $S_N$ given by the recursion
 \begin{equation}
   \begin{split}
     \label{eq:antipodrecur}
     \S_N(\one) & = \one,\\
     \S_N(\omega\tau) & = -\mu_N\left((\S_N\tpr
       I)\left(\Delta_N(\omega) \sqcup\cdot
     (I\tpr B^+_i)\Delta_N(\omega_1)\right)\right),
   \end{split}
 \end{equation}
 where $\tau = B_i^+(\omega_1)\in\OT$ and $\omega,\omega_1\in\OT^*$.
\end{theorem}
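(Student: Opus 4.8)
The plan is to verify the bialgebra and Hopf algebra axioms for $\Hn$ by exploiting the duality with the free D-algebra $\N$ equipped with the Grossman--Larson product $\bpr$, together with the fact that a connected graded bialgebra is automatically a Hopf algebra (Proposition~\ref{gradedantipode}). First I would check that $(\N,\sqcup,u_N)$ is a commutative, associative, unital algebra: this is the standard shuffle algebra on the alphabet $\OT$, already discussed in Section~\ref{ex:shufalg1}, so nothing new is required. The grading $|\cdot|$ counting nodes makes $\sqcup$ graded and makes $\N$ connected, since the only forest of degree $0$ is $\one$.

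Next I would establish that $\Delta_N$ as defined by~(\ref{eq:delta-r}) (equivalently~(\ref{eq:delta-nr})) is a coassociative, counital coproduct, and that it is graded. The cleanest route is via duality: $\Delta_N$ is \emph{defined} as the transpose of the Grossman--Larson product $\bpr$ on $\N$ through the pairing in which forests are orthonormal, as in~(\ref{eq:HnConv}). Since $\bpr$ is associative and unital (with unit $\one$) by the Proposition on the GL product stated just above, its transpose $\Delta_N$ is automatically coassociative and counital, with counit $e_N$ dual to the unit $\one$. Gradedness of $\Delta_N$, i.e.\ $\Delta_N(\N_n)\subset\bigoplus_{p+q=n}\N_p\tpr\N_q$, follows because the GL product respects the node-counting grading: $|\omega\bpr\omega'|=|\omega|+|\omega'|$, as $B^+$ adds one node and $B^-$ removes one. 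One must also check that $\Delta_N$ is an algebra morphism for the shuffle product, i.e.\ $\Delta_N(a\sqcup b)=\Delta_N(a)\sqcup\Delta_N(b)$ in $\N\tpr\N$ (with the componentwise product, $\tau$-twisted as in the bialgebra diagram); dually this says that the GL product distributes over the deconcatenation coproduct of the shuffle Hopf algebra in the appropriate way. I expect this compatibility to be the main obstacle, and I would prove it by induction on $|\omega|$ directly from the recursion~(\ref{eq:delta-r}), carefully tracking how the mixed operation $\sqcup\cdot$ (shuffle on the left tensor factor, concatenation on the right) interacts with an extra shuffle factor; this is the one genuinely combinatorial verification, and it is also where the ``Full Admissible Left Cuts'' description~(\ref{eq:delta-nr}) is most useful, since it makes the compatibility almost manifest once one sees that cuts of $a\sqcup b$ decompose into independent cuts of $a$ and of $b$.

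Having shown $\Hn$ is a connected graded bialgebra, Proposition~\ref{gradedantipode} immediately yields that it is a Hopf algebra with antipode $S_N$ determined recursively by $S_N(1)=1$ and $S_N(x)=-x-\sum_{(x)}S_N(x')x''$ on $\ker e_N$. It remains only to check that this abstract recursion coincides with the stated formula~(\ref{eq:antipodrecur}). I would do this by induction on $|\omega\tau|$: feed the recursion~(\ref{eq:delta-r}) for $\Delta_N(\omega\tau)$ into the abstract antipode recursion, separate off the term $\omega\tau\tpr\one$ (which contributes $-\omega\tau$, matching the ``$-x$'' part), and recognize the remaining sum $\mu_N\circ(S_N\tpr I)$ applied to $\Delta_N(\omega)\sqcup\cdot(I\tpr B^+_c)\Delta_N(\omega_1)$ as exactly the right-hand side of~(\ref{eq:antipodrecur}); that $S_N$ is an anti-homomorphism for $\sqcup$ then follows from the general fact that the antipode of any bialgebra is an algebra anti-homomorphism (here, since $\sqcup$ is commutative, simply a homomorphism). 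For a self-contained treatment I would also point to~\cite{munthe-kaas08oth}, where the FALC description and these verifications are carried out in detail.
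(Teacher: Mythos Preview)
The paper does not actually prove Theorem~\ref{thm:hopf}; it explicitly \emph{recalls} the construction and the statement from~\cite{munthe-kaas08oth} and moves on without argument. So there is no in-paper proof to compare against, only the citation. Your outline is in fact the standard route taken in that reference: show that $(\N,\sqcup,\Delta_N)$ is a connected graded bialgebra (shuffle product well known; coassociativity and counit via duality with the associative unital GL product; the bialgebra compatibility $\Delta_N(a\sqcup b)=\Delta_N(a)\sqcup\Delta_N(b)$ checked from the recursive/FALC description), and then invoke Proposition~\ref{gradedantipode} to obtain the antipode, finally specializing the generic recursion to~\eqref{eq:antipodrecur} via~\eqref{eq:delta-r}.

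One small wrinkle in your last paragraph: when you ``separate off the term $\omega\tau\tpr\one$'' from~\eqref{eq:delta-r} and say it contributes $-\omega\tau$, matching the ``$-x$'' part of Proposition~\ref{gradedantipode}, that bookkeeping is not quite right. Under $\mu_N\circ(S_N\tpr I)$ the summand $\omega\tau\tpr\one$ contributes $S_N(\omega\tau)$, not $-\omega\tau$. The clean way is to use the \emph{full} convolution identity $\mu_N\circ(S_N\tpr I)\circ\Delta_N=e_N(\cdot)\one$: applying it to $\omega\tau$ and inserting~\eqref{eq:delta-r} gives $S_N(\omega\tau)+\mu_N\big((S_N\tpr I)(\Delta_N(\omega)\sqcup\cdot(I\tpr B_i^+)\Delta_N(\omega_1))\big)=0$, which is exactly~\eqref{eq:antipodrecur}. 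Equivalently, if you want to match Proposition~\ref{gradedantipode} term by term, note that the second summand in~\eqref{eq:delta-r} already contains $\one\tpr\omega\tau$ (from the $\one\tpr\omega$ part of $\Delta_N(\omega)$ and the $\one\tpr\omega_1$ part of $\Delta_N(\omega_1)$), so the ``$-x$'' there cancels against $S_N(\one)\sqcup\omega\tau=\omega\tau$. Either way the conclusion you state is correct; only the narration needs adjusting.
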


 \subsubsection{Lie--Butcher series and flows on manifolds}
The set of maps $U(\g)^{\M}$ from $\M$ to $U(\g)$ is a D-algebra where the derivations are the vector fields $\g^\M$. Thus, given a set of colors $\C$ and a map $\nu\colon \C\rightarrow \g^\M$ there exists a unique map $\F_\nu\colon\N\rightarrow U(\g)^\M$ such that for all $c\in\C$ and all $g,h\in\N$ we have
 \begin{align}
 \F_\nu(c) & = \nu(c) \\
 \F_\nu(\one) &= \one\\
 \F_\nu(gh) &= \F_\nu(g)\F_\nu(h)\\
   \F_\nu(g[h]) &= \F_\nu(g)[\F_\nu(h)] \\
 \F_\nu(g\bpr h) &= \F_\nu(g)\bpr \F_\nu(h) .\\
 \end{align}

\begin{definition}~\label{eq:lbseries}
For an infinite series $\alpha\in \N^*=\RR\langle\langle\OT\rangle\rangle$
a Lie--Butcher series is a formal series in $U(\g)^\M$ defined as\[\Bs_{t}(\alpha) = \sum_{\omega\in\OT^*}t^{|\omega|}\alpha(\omega) \F_\nu(\omega). \]
\end{definition}

\noindent Note that $\N$ can be turned into a Hopf algebra two different ways: either as $\Hsh$ with product $\sqcup$ and deconcatenation coproduct $\cpd$, or as $\Hn$ with the same product $\sqcup$, but where the coproduct $\Delta_N$ is the dual of the ordered GL product. This gives rise to two different convolutions on $\N^*$,
the frozen composition $\alpha,\beta\mapsto \alpha\beta$ in Example~\ref{ex:shufalg2}, and the non-frozen composition
$\alpha,\beta\mapsto \alpha\bpr \beta$ as in~(\ref{eq:HnConv}). Since the product is the same, we have that the characters and the infinitesimal characters are the same as vector spaces
\begin{eqnarray*}\g(\Hsh)=\g(\Hn)&  =& \stset{\alpha\in \N}{\alpha(\one)=0, \alpha(\omega\sqcup\omega') = 0 \mbox{ for all 
$\omega,\omega'\in \OT^*\backslash \one$}}\\
G(\Hsh)=G(\Hn)&  =& \stset{\alpha\in \N}{\alpha(\one)=1, \alpha(\omega\sqcup\omega') = \alpha(\omega)\alpha(\omega') \mbox{ for all 
$\omega,\omega'\in \OT^*$}}.
\end{eqnarray*}
Hovever, the exponential, logarithm, Dynkin and Eulerian idempotents, as well as the antipode depend on whether they are based on $\Hsh$ or $\Hn$. Which to use in practice depends on which operation we want to express on the manifold. Recall that \emph{frozen elements} of $U(\g)^\M$ are
constant functions $g\colon \M\rightarrow U(\g)$. If $g$ is frozen then $f[g]=0$ for all $f$, and hence $f\bpr g = fg$. The subalgebra of frozen vector fields therefore reduces to $\Hsh$.

We summarize the basic properties of LB-series: $\Bs_t$ sends infinitesimal characters to (formal) vector fields on $\M$ and characters to pullback series representing formal diffeomorphisms on $\M$. LB-series preserve both frozen and non-frozen composition and sends left grafting to the connection on $U(\g)^\M$.
\begin{align*}
\Bs_t(\alpha\beta) &= \Bs_t(\alpha)\Bs_t(\beta)\\
\Bs_t(\alpha\bpr\beta) &= \Bs_t(\alpha)\bpr\Bs_t(\beta)\\
\Bs_t(\alpha[\beta])&= \Bs_t(\alpha)[\Bs_t(\beta)] .
\end{align*}
Note  that if $\alpha\in G(\Hn)$, then $\alpha[\beta]$ represents algebraically the pullback (parallel transport) of $\beta$ along the flow of $\alpha$. On 
the manifold 
\[\Bs_h(\alpha[\beta])(y_0)= \Bs_h(\alpha)[\Bs_h(\beta)](y_0) = \Bs_h(\beta)(\Phi(y_0)), \]
where $\Phi$ is the diffeomorphism represented by $\alpha\in G(\Hn)$ at $t=h$.
Since the connection is flat, the pullback depends only on the endpoint $\Phi(y_0)$ and not on the actual path.

There are (at least) three ways to represent a flow $y_0\mapsto y_t=\Phi_t(y_0)$ on $\M$,  using LB-series:
\begin{enumerate}
\item In terms of pullback series. Find $\alpha\in G(\Hn)$ such that 
  \begin{equation}
\psi(y(t)) = \Bs_t(\alpha)[\psi](y_0) \quad\mbox{for any $\psi\in U(\g)^\M$.}
\end{equation}
This representation is used in the analysis of Crouch--Grossman methods by Owren and Marthinsen~\cite{owren99rkm}.
In the classical setting, this is called a $S$-series~\cite{murua1999fsa}.
\item In terms of an autonomous differential equation. Find $\beta\in \g(\Hn)$ such that $y(t)$ solves
\begin{equation}
y'(t) = \Bs_h(\beta)(y(t))\dpr y(t).
\end{equation}
In the classical setting, this is called backward error analysis. In the Lie group setting, this formulation has, however, never been investigated in detail (but it should!).
\item In terms of a non-autonomous equation of \emph{Lie type} (time dependent frozen vector field). Find $\gamma\in \g(\Hsh)$ such that $y(t)$ solves
\begin{equation}\label{lietype}
y'(t) = \frac{\partial}{\partial t} \Bs_t(\gamma)(y_0)\dpr y(t).
\end{equation}
This representation is used in~\cite{munthe-kaas95lbt,munthe-kaas98rkm}. In the classical setting this is (almost) the standard definition of $B$-series. The connection with the classical B-series is discussed below. 
\end{enumerate}
The algebraic relationship between $\alpha$, $\beta$ and $\gamma$ is given as follows:
\begin{align*}
\beta &= \alpha\opr e &\mbox{$e$ is Euler idempotent in $\Hn$.}\\
\alpha &= \exp^\bpr(\beta)&\mbox{Exponential wrt.\ GL-product}\\
\gamma &= \alpha\opr Y^{-1}\opr D &\mbox{Dynkin idempotent in $\Hsh(\OT)$.}\\
\alpha &= Q(\gamma)&\mbox{$Q$-operator~(\ref{eq:Q}) in $\Hsh(\OT)$.}
\end{align*}

\begin{example}Two examples are of particular interest; the exact solution and exponential Euler method.
In both cases we consider $y'(t) = f(y)\dpr y$, where $\C = \{\ab\}$ and $\nu(\ab) = f$.

The exponential Euler method is particularly simple.  Since each step of the method follows the flow the frozen vector field $f(y_n)\in \g$, the
Type 3 LB-series for  Exponential Euler must be given by
\[ \gamma_{\text{Euler}} = \ab\]
just as in the classical setting\footnote{The classical presentation is $\gamma = \one+\ab$, when the B-series is given in the form~(\ref{eq:bserclassic}).}.

Type 3 LB-series for the exact solution can be derived in various ways. Theorem 2.2 in~\cite{munthe-kaas98rkm} derives the exact solution as the solution of
\[y' = f_t\dpr y, \quad y(0)= y_0,  \]
where $f_t=f(y(t))\in \g$ is the pullback of $f$ along the time dependent flow of $f_t$. Letting \\
$f_t = \frac{\partial}{\partial t}\Bs_t(\gamma)$ we obtain
\[
Y\opr\gamma = Q(\gamma)[\ab]\Rightarrow \gamma = Y^{-1}\opr B^+(Q(\gamma)) .
\]
Note that this is reminiscent of a so-called combinatorial Dyson--Schwinger equation~\cite{foissy2008fdb}. Solving by iteration yields
\begin{eqnarray*}
\gamma_{\text{Exact}} & = &
\ab + \frac{1}{2!}\aabb + \frac{1}{3!}(\aababb+\aaabbb) + \frac{1}{4!}(\aabababb+\aaabbabb+2\aabaabbb
+\aaababbb+\aaaabbbb)+ \frac{1}{5!}(\aababababb+\aaabbababb+ 2\aabaabbabb\\
& &
+3\aababaabbb+\aaababbabb+ \aaaabbbabb+3\aaabbaabbb+3\aabaababbb+3\aabaaabbbb+\aaabababbb+\aaaabbabbb+2\aaabaabbbb+\aaaababbbb+\aaaaabbbbb
)+\\
& &
\frac{1}{6!}(\aabababababb+\cdots)+\cdots
\end{eqnarray*}
Remarkably, the LB-series of the exact solution is just a combination of trees, and not commutators of trees. Thus in Type 3 LB-series developments of numerical integrators,  commutators of trees must be zero up to to the order of the method.
\end{example}

Composition and inverse is simplest for pullback series, Type 1. For series of Type 3, we map to Type 1, compose (or invert) and map back again. If $\gamma,\tilde{\gamma}$ are series of Type 3, then the basic operations are done as:
\begin{align}
\text{Composition} &\colon\quad \gamma,\tilde{\gamma}\mapsto(Q(\gamma)\bpr Q(\tilde{\gamma}))\opr Y^{-1}\opr D \\
\text{Inverse} & \colon\quad \gamma^{-1} = Q(\gamma)\opr S\opr Y^{-1}\opr D\label{eq:3inv} \\
\text{Backward error} &\colon\quad \Log_3(\gamma):= Q(\gamma)\opr e .\label{eq:3log}
\end{align}

\subsubsection{Relations to classical B-series}The relation between classical B-series and LB-series is detailed in~\cite{munthe-kaas08oth}. 
Classical $B$-series are expressed in terms of linear combinations of non-planar trees $T$, resulting in the Connes--Kreimer Hopf algebra $\Hc$ built from non-planar trees \cite{brouder2000rkm}.
In the classical setting the connection is torsion-free, and concatenation is commutative. Therefore $\g(\Hc) = \text{span}(T)$. That is, $\g(\Hc)$ is just linear combinations of trees. This fact is the reason why many discussions in the classical setting can avoid series involving forests of trees (words in $T^*$). Also the difference between series of Type 1 and Type 3 is in not emphasized in many papers. Since the coefficients $\kappa$ of the $Q$-polynomials add up to one under symmetrization, we find in the classical setting that \[Q(\alpha)(\omega) = \alpha(\tau_1)\alpha(\tau_2)\cdots\alpha(\tau_k)\omega ,\]
for $\omega=B^+(\tau_1\tau_2\cdots \tau_k)$, so formulas involving pullbacks are often expressed directly from B-series (Type 3) using the $Q$-polynomials in this form. Our claim that classical B-series fits best into series of Type 3 is based on the trivial observation that the curve $y_t = \Bs_t(\alpha)(y)$ in~(\ref{lietype}) solves a differential equation with a time dependent frozen vector field given as
\[y(t) = \frac{\partial}{\partial t} \sum_{\tau\in T} \frac{t^{|\tau|}}{\sigma(\tau)}\F(\tau) .\]
\noindent One can ask why the symmetrization $\sigma(\tau)$ is natural to include in the classical setting, but not in the LB-series setting. To explain the relationship between the two theories we define a symmetrization operator:

\begin{definition}
 \label{defn:symm}
 The symmetrization operator $\signature{\Omega}{\N}{\N}$ is defined for $\omega\in \OT^*$ and $\tau\in \OT$ as
 \begin{align*}
   \Omega(\one) & = \one,\\
   \Omega(\omega\tau) & = \Omega(\omega)\sqcup\Omega(\tau),\\
   \Omega(B^+_i(\omega)) & = B^+_i(\Omega(\omega)).
 \end{align*}
\end{definition}
\noindent The shuffle product permutes the trees in a forest in all possible ways, and the symmetrization of a tree is a recursive splitting in sums over all permutations of the branches.  The symmetrization defines an equivalence relation on $\OT^*$, that is
\begin{equation*}
 \label{eq:eqforests}
 \Omega(\omega_1)=\Omega(\omega_2) 
 \quad\Longleftrightarrow\quad \omega_1\sim\omega_2.
\end{equation*}
\noindent Let $\iota:\Hc\rightarrow\Hn$ be an inclusion where a tree is identified with one of its equivalent planar trees. In~\cite{munthe-kaas08oth} we show that $\tilde{\Omega} = \Omega\opr \iota\colon \Hc\rightarrow\Hn$ is a Hopf algebra isomorphism onto its image, i.e.\ $\Hc$ is a proper subalgebra of $\Hn$. The adjoint map $\tilde{\Omega}^*\colon \Hn^*\rightarrow\Hc^*$ is given as \[\tilde{\Omega}^*(\alpha)(\omega) = \sigma(\omega)\sum_{\omega'\sim \omega}\alpha(\omega').\] The tree symmetrization $\sigma(\omega)$ enters exactly such that the LB-series as given in~(\ref{eq:lbseries}) maps to the classical B-series in~(\ref{eq:bserclassic}).

\subsection{Substitution law for LB-series}
The so-called \emph{substitution law} for B-series~\cite{chartier2005asl} can without much difficulty be generalized to LB series. Consider $\N$ as a D-algebra where the derivations are the Lie polynomials $D(\N) = \g(\Hn)\cap \N$. By the universality property of $\N$, we know that for any map  $a\colon \C\rightarrow D(\N)$ there exists a unique D-algebra homomorphism
$\F_a:\N\rightarrow \N$ such that $\F_a(c)=a(c)$ for all $a\in \C$. This is called the substitution law.

\begin{multicols}{2}
\begin{definition}For any map $a\colon\C\rightarrow D(\N)$ there exists a unique D-algebra homomorphism 
$a\star:\N\rightarrow \N$ such that $a(c) = a\star c$ for all $c\in \C$. The map $a\star$ is called $a$-substitution\footnote{In most applications we want to substitute infinite series and extend $a\star$ to a homomorphism $a\star\colon\N^*\rightarrow \N^*$. 
The extension to infinite substitution is straightforward because of the grading, we omit details.}.
\newline
\begin{diagram}[labelstyle=\scriptstyle]
\C &\rInto& \N\\
\dTo^a && \dTo_{a\star} \\
D(\N) &\rInto& \N.
\end{diagram}
\end{definition}
\end{multicols}

The properties of this substitution law, together with applications of it, will be studied in a forthcoming paper (\cite{lundervold2010bea}). We just mention that many of the useful properties of the substitution law follow immediately from the fact that $a\star:\N\rightarrow \N$ is a homomorphism. For example, for all $n,n'\in\N$ we have:
\begin{align*}
a\star\one &= \one\\
a\star (nn') &= (a\star n)(a\star n')\\
a\star (n[n']) &= (a\star n)[a\star n']\\
a\star (n\bpr n') &= (a\star n)\bpr (a\star n')\\
\end{align*}

\section{Final remarks and outlook}
Inspired by problems in numerical analysis we have discussed various algebraic structures arising in the study of formal diffeomorphisms on manifolds. We have seen that the Connes--Kreimer Hopf algebra naturally extends from commutative frames on $\RR^n$ to non-commutative frames on general manifolds. In particular we have presented the Dynkin and Euler operators and non-commutative Fa\`a di Bruno type bialgebras in this generalized setting.

The formalism in this paper has many applications in numerical analysis, and analysis of Lie group integrators in particular. However, the  underlying structures are general constructions with possible applications in other fields, such as geometric control theory and sub-Riemannian geometry. Connections to stochastic differential equations on manifolds is an other topic which is worth investigating further.

\section*{Acknowledgements} The authors would like to thank Alessandra Frabetti, Dominique Manchon, Gilles Vilmart and Will Wright for interesting discussions on topics of this paper. In particular we would like to thank Kurusch Ebrahimi-Fard for his support and useful remarks in the writing process. His enthusiasm and inclusive spirit have been of crucial importance for the completion of this paper.

\newpage

\begin{table}[!ht]
 \centering
 \begin{equation*}
   \begin{array}{c@{\,\,}|@{\quad}l}
     \hline \\[-2mm]
     \omega & \Delta_N(\omega)  \\[1mm] 
     \hline \\[-2mm]
     \one & \one\tpr\one \\[1mm] 
     \ab & \ab\tpr\one+\one\tpr\ab \\[2mm]     
     \aabb & \aabb\tpr\one+\ab\tpr\ab+\one\tpr\aabb  \\[2mm]
     \ab\ab & \ab\ab\tpr\one+\ab\tpr\ab+\one\tpr\ab\ab \\[2mm]
     \aaabbb &\aaabbb\tpr\one+\ab\tpr\aabb+\aabb\tpr\ab+
     \one\tpr\aaabbb \\[2.5mm]
     \aababb & \aababb\tpr\one+\ab\ab\tpr\ab+\ab\tpr\aabb+
     \one\tpr\aababb \\[2.5mm]
     \ab\aabb & \ab\aabb\tpr\one+2\ab\ab\tpr\ab
     +\ab\tpr\aabb+\ab\tpr\ab\ab+
     \one\tpr\ab\aabb \\[2.5mm]
     \aabb\ab & \aabb\ab\tpr\one+\aabb\tpr\ab+
     \ab\tpr\ab\ab+\one\tpr\aabb\ab \\[2mm]
     \ab\ab\ab & \ab\ab\ab\tpr\one+\ab\ab\tpr
     \ab+\ab\tpr\ab\ab+\one\tpr\ab\ab\ab \\[2mm]
     \aaaabbbb & \aaaabbbb\tpr\one+\aaabbb\tpr\ab+\aabb\tpr\aabb+
     \ab\tpr\aaabbb+\one\tpr\aaaabbbb \\[2.5mm]
     \aaababbb & \aaababbb\tpr\one+\aababb\tpr\ab+\ab\ab\tpr\aabb+
     \ab\tpr\aaabbb+\one\tpr\aaababbb \\[2.5mm]
     \aabaabbb & \aabaabbb\tpr\one+\ab\aabb\tpr\ab+2\ab\ab\tpr
     \aabb+\ab\tpr\aaabbb+\ab\tpr\aababb+\one\tpr\aabaabbb \\[2.5mm]
     \aaabbabb & \aaabbabb\tpr\one+\aabb\ab\tpr\ab+\aabb\tpr
     \aabb+\ab\tpr\aababb+\one\tpr\aaabbabb \\[2.5mm]
     \aabababb & \aabababb\tpr\one+\ab\ab\ab\tpr\ab+\ab\ab\tpr
     \aabb+\ab\tpr\aababb+\one\tpr\aabababb \\[2.5mm]
     \ab\aaabbb & \ab\aaabbb\tpr\one+\ab\aabb\tpr\ab+\aabb\ab\tpr\ab+\aabb\tpr
     \ab\ab+2\ab\ab\tpr\aabb+\ab\tpr\ab\aabb+
     \ab\tpr\aaabbb+\one\tpr\ab\aaabbb \\[2.5mm]
     \aaabbb\ab & \aaabbb\ab\tpr\one+\aaabbb\tpr\ab+\aabb\tpr
     \ab\ab+\ab\tpr\aabb\ab+\one\tpr\aaabbb\ab \\[2.5mm]
     \ab\aababb & \ab\aababb\tpr\one+3\ab\ab\ab\tpr\ab+
     \ab\ab\tpr\ab\ab+2\ab\ab\tpr\aabb+
     \ab\tpr\ab\aabb+\ab\tpr\aababb+
     \one\tpr\ab\aababb \\[2.5mm]
     \aababb\ab &  \aababb\ab\tpr\one+\aababb\tpr\ab+\ab\ab\tpr
     \ab\ab+\ab\tpr\aabb\ab+\one\tpr\aababb\ab \\[2.5mm]
     \aabb\aabb & \aabb\aabb\tpr\one+\aabb\ab\tpr\ab+\ab\aabb\tpr\ab
     +\aabb\tpr\aabb+2\ab\ab\tpr\ab\ab+
     \ab\tpr\ab\aabb+\ab\tpr\aabb\ab+
     \one\tpr\aabb\aabb \\[2.5mm]
     \ab\ab\aabb & \ab\ab\aabb\tpr\one+3\ab\ab\ab\tpr\ab+
     2\ab\ab\tpr\ab\ab+\ab\ab\tpr\aabb+
     \ab\tpr\ab\ab\ab+\ab\tpr\ab\aabb+
     \one\tpr\ab\ab\aabb \\[2.5mm]
     \ab\aabb\ab & \ab\aabb\ab\tpr\one+\ab\aabb\tpr\ab+
     2\ab\ab\tpr\ab\ab+\ab\tpr\ab\ab\ab+
     \ab\tpr\aabb\ab+\one\tpr\ab\aabb\ab \\[2.5mm]
     \aabb\ab\ab & \aabb\ab\ab\tpr\one+\aabb\ab\tpr\ab+
     \aabb\tpr\ab\ab+\ab\tpr\ab\ab\ab+
     \one\tpr\aabb\ab\ab \\[2mm]
     \ab\ab\ab\ab & \ab\ab\ab\ab\tpr\one+\ab\ab\ab\tpr\ab+
     \ab\ab\tpr\ab\ab+\ab\tpr\ab\ab\ab+
     \one\tpr\ab\ab\ab\ab \\[2mm] \hline
   \end{array}
 \end{equation*}
 \caption{Examples of the coproduct $\Delta_N$, defined
   in \eqref{eq:delta-r}. \label{tab:coproduct}} 
\end{table}

\bibliography{./bib/ref_alex,./bib/hopf,./bib/glm,./bib/geom_int}

\begin{thebibliography}{10}

\bibitem{abe80ha}
E.~Abe.
\newblock {\em {Hopf Algebras}}.
\newblock Cambridge University Press, 1980.

\bibitem{abraham88mta}
R.~Abraham, J.~E. Marsden, and T.~Ratiu.
\newblock {\em Manifolds, Tensor Analysis, and Applications}.
\newblock {AMS} 75. Springer-Verlag, {S}econd edition, 1988.

\bibitem{berland05aso}
H.~Berland and B.~Owren.
\newblock {Algebraic structures on ordered rooted trees and their significance
  to Lie group integrators}.
\newblock {\em Group theory and numerical analysis}, 39:49--63, 2005.

\bibitem{brouder2000rkm}
C.~Brouder.
\newblock {Runge-Kutta methods and renormalization}.
\newblock {\em The European Physical Journal C-Particles and Fields},
  12(3):521--534, 2000.

\bibitem{brouder04tra}
C.~Brouder.
\newblock Trees, renormalization and differential equations.
\newblock {\em BIT}, 44(3):425--438, 2004.

\bibitem{brouder06nch}
C.~Brouder, A.~Frabetti, and C.~Krattenthaler.
\newblock {Non-commutative Hopf algebra of formal diffeomorphisms}.
\newblock {\em Advances in Mathematics}, 200(2):479--524, 2006.

\bibitem{burgunder2008eia}
E.~Burgunder.
\newblock {Eulerian idempotent and Kashiwara-Vergne conjecture}.
\newblock 58(4):1153--1184, 2008.

\bibitem{butcher63cft}
J.~C. Butcher.
\newblock Coefficients for the study of {R}unge-{K}utta integration processes.
\newblock {\em J. Austral. Math. Soc.}, 3:185--201, 1963.

\bibitem{butcher72aat}
J.~C. Butcher.
\newblock An algebraic theory of integration methods.
\newblock {\em Math. Comp.}, 26:79--106, 1972.

\bibitem{calaque08tha}
D.~Calaque, K.~Ebrahimi-Fard, and D.~Manchon.
\newblock {Two interacting Hopf algebras of trees}.
\newblock {\em To appear in Adv. Appl. Math}, 2009, math.CO/0806.2238v3.

\bibitem{cartier2006apo}
P.~Cartier.
\newblock {A primer of Hopf algebras}.
\newblock In {\em {Frontiers in number theory, physics, and geometry}},
  volume~II, pages 537--615. Springer, Berlin, 2007.

\bibitem{cayley1857taf}
A.~Cayley.
\newblock {On the theory of the analytical forms called trees}.
\newblock {\em Philos. Mag}, 13(19):4--9, 1857.

\bibitem{celledoni2003cfl}
E.~Celledoni, A.~Marthinsen, and B.~Owren.
\newblock {Commutator-free Lie group methods}.
\newblock {\em Future Generation Computer Systems}, 19(3):341--352, 2003.

\bibitem{chartier2005asl}
P.~Chartier, E.~Hairer, and G.~Vilmart.
\newblock {A substitution law for B-series vector fields}.
\newblock {\em INRIA report}, (5498), 2005.

\bibitem{chartier07nib}
P.~Chartier, E.~Hairer, and G.~Vilmart.
\newblock {Numerical integrators based on modified differential equations}.
\newblock {\em Mathematics of Computation}, 76(260):1941, 2007.

\bibitem{chartier08aat}
P.~Chartier and A.~Murua.
\newblock {An algebraic theory of order}.
\newblock {\em ESAIM: Mathematical Modelling and Numerical Analysis},
  43(4):607--630, 2009.

\bibitem{connes1998har}
A.~Connes and D.~Kreimer.
\newblock {Hopf algebras, renormalization and noncommutative geometry}.
\newblock {\em Communications in Mathematical Physics}, 199(1):203--242, 1998.

\bibitem{crouch93nio}
P.~E. Crouch and R.~Grossman.
\newblock Numerical integration of ordinary differential equations on
  manifolds.
\newblock {\em J. Nonlinear Sci.}, 3:1--33, 1993.

\bibitem{dur86mfi}
A.~D{\"u}r.
\newblock {\em M\"obius functions, incidence algebras and power series
  representations}, volume 1202 of {\em Lecture Notes in Mathematics}.
\newblock Springer-Verlag, Berlin, 1986.

\bibitem{ebrahimi-fard07alt}
K.~Ebrahimi-Fard, J.M. Gracia-Bond{\'\i}a, and F.~Patras.
\newblock {A Lie Theoretic Approach to Renormalization}.
\newblock {\em Communications in Mathematical Physics}, 276(2):519--549, 2007.

\bibitem{ebrahimi-fard2009ama}
K.~Ebrahimi-Fard and D.~Manchon.
\newblock {A Magnus-and Fer-type formula in dendriform algebras}.
\newblock {\em Foundations of Computational Mathematics}, 9:1--22, 2009,
  math.CO/07070607v3.

\bibitem{figueroa2005cha}
H.~Figueroa and J.M Gracia-Bondia.
\newblock {Combinatorial Hopf algebras in quantum field theory I}.
\newblock {\em Rev.Math.Phys.}, 17:881, 2005, hep-th/0408145v3.

\bibitem{figueroa2005fdb}
H.~Figueroa, J.M. Gracia-Bondia, and J.C. Varilly.
\newblock {Faa di Bruno Hopf algebras}.
\newblock {\em Preprint}, 2005, math.CO/0508337.

\bibitem{foissy2008fdb}
L.~Foissy.
\newblock {Fa{\`a} di Bruno subalgebras of the Hopf algebra of planar trees
  from combinatorial Dyson--Schwinger equations}.
\newblock {\em Advances in Mathematics}, 218(1):136--162, 2008, 0707.1204v2.

\bibitem{hairer06gni}
E.~Hairer, C.~Lubich, and G.~Wanner.
\newblock {\em Geometric {N}umerical {I}ntegration}.
\newblock Springer-Verlag, second edition, 2006.

\bibitem{hairer74otb}
E.~Hairer and G.~Wanner.
\newblock On the {B}utcher group and general multi-value methods.
\newblock {\em Computing (Arch. Elektron. Rechnen)}, 13(1):1--15, 1974.

\bibitem{iserles1999imm}
A.~Iserles, A.~Marthinsen, and S.P. N{\o}rsett.
\newblock {On the implementation of the method of Magnus series for linear
  differential equations}.
\newblock {\em BIT Numerical Mathematics}, 39(2):281--304, 1999.

\bibitem{iserles00lgm}
A.~Iserles, H.Z. Munthe-Kaas, S.P. N{\o}rsett, and A.~Zanna.
\newblock Lie-group methods.
\newblock {\em Acta Numerica}, 9:215--365, 2000.

\bibitem{iserles1999sld}
A.~Iserles and S.P. N{\o}rsett.
\newblock {On the solution of linear differential equations in Lie groups}.
\newblock {\em Philosophical Transactions of the Royal Society A: Mathematical,
  Physical and Engineering Sciences}, 357(1754):983--1019, 1999.

\bibitem{kassel95qg}
C.~Kassel.
\newblock {\em Quantum groups}.
\newblock Springer-Verlag, 1995.

\bibitem{lenczewski1998anl}
R.~Lenczewski.
\newblock {A noncommutative limit theorem for homogeneous correlations}.
\newblock {\em Studia Mathematica}, 129(3), 1998.

\bibitem{loday97ch}
J.L. Loday.
\newblock {\em Cyclic Homology}.
\newblock Springer-Verlag, second edition, 1997.

\bibitem{loday08cha}
J.L. Loday and M.O. Ronco.
\newblock {Combinatorial Hopf algebras}.
\newblock {\em Clay Mathematics Proceedings}, 12:347--384, 2010,
  math.CO/0508337.

\bibitem{lundervold2010bea}
A.~Lundervold and H.Z. Munthe-Kaas.
\newblock {Backward error analysis and the substitution law for Lie group
  integrators}.
\newblock {\em Preprint}, 2010.

\bibitem{manchon06haf}
D.~Manchon.
\newblock {Hopf algebras, from basics to applications to renormalization}.
\newblock {\em Preprint}, 2006, math.QA/0408405v2.

\bibitem{merson57aom}
R.~H. Merson.
\newblock An operational method for the study of integration processes.
\newblock In {\em Proc. Conf., Data Processing \& Automatic Computing
  Machines}, pages 110--1--11025, 1957.

\bibitem{monaco07fcc}
S.~Monaco, D.~Normand-Cyrot, and C.~Califano.
\newblock {From chronological calculus to exponential representations of
  continuous and discrete-time dynamics: a Lie-algebraic approach}.
\newblock {\em IEEE Transactions on Automatic Control}, 52(12):2227--2241,
  2007.

\bibitem{munthe-kaas95lbt}
H.~Munthe-Kaas.
\newblock {L}ie--{B}utcher theory for {R}unge--{K}utta methods.
\newblock {\em BIT}, 35(4):572--587, 1995.

\bibitem{munthe-kaas98rkm}
H.~Munthe-Kaas.
\newblock {R}unge--{K}utta methods on {L}ie groups.
\newblock {\em BIT}, 38(1):92--111, 1998.

\bibitem{munthe-kaas03oep}
H.~Munthe-Kaas and S.~Krogstad.
\newblock {On enumeration problems in Lie--Butcher theory}.
\newblock {\em Future Generation Computer Systems}, 19(7):1197--1205, 2003.

\bibitem{munthe-kaas99cia}
H.~Munthe-Kaas and B.~Owren.
\newblock Computations in a free {L}ie algebra.
\newblock {\em R. Soc. Lond. Philos. Trans. Ser. A Math. Phys. Eng. Sci.},
  357(1754):957--981, 1999.

\bibitem{munthe-kaas08oth}
H.Z. Munthe-Kaas and W.~Wright.
\newblock On the {H}opf algebraic structure of {L}ie group integrators.
\newblock {\em Found. Comput. Math}, 8(2):227 -- 257, 2008, math/0603023v1.

\bibitem{murua1999fsa}
A.~Murua.
\newblock {Formal series and numerical integrators, Part I: Systems of ODEs and
  symplectic integrators}.
\newblock {\em Applied numerical mathematics}, 29(2):221--251, 1999.

\bibitem{owren05ocf}
B.~Owren.
\newblock {Order conditions for commutator-free Lie group methods}.
\newblock {\em Journal of Physics A--Mathematical and General},
  39(19):5585--5600, 2006.

\bibitem{owren99rkm}
B.~Owren and A.~Marthinsen.
\newblock {R}unge--{K}utta methods adapted to manifolds and based on rigid
  frames.
\newblock {\em BIT}, 39(1):116--142, 1999.

\bibitem{patras02oda}
F.~Patras.
\newblock {On Dynkin and Klyachko idempotents in graded bialgebras}.
\newblock {\em Advances in Applied Mathematics}, 28(3/4):560--579, 2002.

\bibitem{reutenauer93fla}
C.~Reutenauer.
\newblock {\em Free Lie algebras}.
\newblock Oxford University Press, 1993.

\bibitem{sweedler69ha}
M.~E. Sweedler.
\newblock {\em Hopf algebras}.
\newblock Mathematics Lecture Note Series. W. A. Benjamin, Inc., New York,
  1969.

\end{thebibliography}

\end{document}